\newtheorem {pro}{Proposition}[section]
\newtheorem {thm}[pro]{Theorem}%[section]
\newtheorem{lem}[pro]{Lemma}
\theoremstyle{definition}
 \newtheorem {rem}[pro]{Remark}%[section]
\newtheorem {dfn}[pro]{Definition}%[section]
\newtheorem {ste}{Step}
\newtheorem {step}{Step}
\newcommand{\G}{\mathbb{G}}
\newcommand{\R}{\mathbb{R}}
\newcommand{\N}{\mathbb{N}}
\newcommand{\C}{\mathcal{C}}
\newcommand{\et}{\quad \mbox{and} \quad }
\newcommand{\St}{\mathcal{D}}
\newcommand{\spl}{\mathcal{D}^+}
\newcommand{\spn}{\mathcal{D}^+(\R^n)}
\newcommand{\don}{\mathcal{D}^0(\R^n)}
\newcommand{\din}{\mathcal{D}^\infty(\R^n)}
\newcommand{\drn}{\mathcal{D}(\R^n)}
\newcommand{\xib}{\overline{\xi}}
\newcommand{\xt}{\tilde{x}}
\newcommand{\U}{\mathcal{U}}
\newcommand{\V}{\mathcal{V}}
\newcommand{\W}{\mathcal{W}}
\newcommand{\D}{\mathcal{D}}
\newcommand{\ep}{\varepsilon}
\newcommand{\dej}{{\delta_j}}
\newcommand{\pa}{\partial}
\newcommand{\om}{\infty}
\title {Efroymson's approximation Theorem for globally subanalytic functions}
\author[A. Valette  and G. Valette]{Anna Valette  and Guillaume Valette}
\address[A. Valette]{Instytut Matematyki Uniwersytetu
Jagiello\'nskiego, ul. S \L ojasiewicza, Krak\'ow, Poland}
\email{anna.valette@im.uj.edu.pl}\address[G. Valette]{Instytut Matematyki Uniwersytetu
Jagiello\'nskiego, ul. S \L ojasiewicza, Krak\'ow, Poland}\email{guillaume.valette@im.uj.edu.pl}
\keywords{subanalytic functions, o-minimal structures, analytic functions, Nash functions, approximation theorem, Efroymson's theorem}
\thanks{Research partially supported by the NCN grant  2014/13/B/ST1/00543.}
\subjclass[2010]{14P10,  32B20, 41A30, 58A07}
\begin{document}\maketitle
\begin{abstract}
Efroymson's Approximation Theorem asserts that if $f$ is a $\C^0$ semialgebraic mapping on a $\C^\om$ semialgebraic submanifold $M$ of $\R^n$ and if $\ep:M\to \R$ is a positive continuous semialgebraic function then there is a $\C^\infty$ semialgebraic function $g:M\to \R$ such that $|f-g|<\ep$. We prove a generalization of this result to the globally subanalytic category. Our theorem actually holds in a larger framework since it applies to every function which is definable in a polynomially bounded o-minimal structure (expanding the real field) that admits $\C^\infty$ cell decomposition. We also establish approximation theorems for Lipschitz  and $\C^1$ definable functions.
\end{abstract}

\section{Introduction} 
 In \cite{ef_approx}, G. Efroymson proved an approximation theorem for continuous semialgebraic functions (see also \cite{bcr,shiota,shiota2}). This result can be stated as follows:
 \begin{thm}\label{thm_ef_original}
  Let $M$ be a Nash submanifold of $\R^n$ and let $\ep:M\to \R$ be a positive continuous semialgebraic function. Given a continuous  semialgebraic function $f$ on $M$ there is a Nash function $g$ on $M$ such that $|f(x)-g(x)|<\ep(x)$, for all $x\in M$.  
 \end{thm}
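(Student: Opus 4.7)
My plan is the classical Efroymson-style proof: approximate locally by constants, then glue via a Nash partition of unity furnished by Mostowski's separation theorem.

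First I would reduce to a convenient setting. Via a semialgebraic diffeomorphism such as $x\mapsto x/\sqrt{1+|x|^2}$, replace $M$ by a bounded Nash submanifold of $\R^n$; continuity and semialgebraicity are preserved. Next, using a $\C^\om$ cell decomposition of $\R^n$ compatible with $M$ and with level sets of $f$ and $\ep$, I would pick a finite semialgebraic open cover $\{U_i\}$ of $M$ together with closed semialgebraic subsets $F_i \subset U_i$ still covering $M$, refined enough that the oscillation of $f$ on each $U_i$ is less than $\tfrac{1}{3}\inf_{U_i}\ep$. On each $U_i$ choose a point $x_i$ and set $g_i := f(x_i) \in \R$; then $|f - g_i| < \ep/3$ pointwise on $U_i$.

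The heart of the argument is to produce a Nash partition of unity subordinate to $\{U_i\}$. For each $i$, Mostowski's separation theorem yields a Nash function $h_i:M\to \R$ that is strictly positive on $F_i$ while being suitably small off $U_i$. After rescaling and summing carefully, one sets
\[
\phi_i := \frac{h_i^2}{\sum_j h_j^2}, \qquad g := \sum_i \phi_i\, g_i.
\]
Since every point of $M$ lies in some $F_j$, the denominator is a Nash function nowhere vanishing on $M$, so $g$ is Nash. Then
\[
|f(x) - g(x)| \leq \sum_{i} \phi_i(x)\,|f(x) - g_i| < \ep(x)
\]
holds pointwise by the oscillation estimate, giving the required approximation.

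The main obstacle is the construction of this Nash partition of unity. In the smooth category one simply glues with bump functions, but Nash functions admit no compactly supported analogues, so one must balance the weights $h_i$ so that each $\phi_i$ is effectively concentrated near $\overline{F_i}$ \emph{and} the denominator $\sum h_j^2$ stays uniformly bounded below; this is exactly where Mostowski's separation theorem enters. An alternative strategy, more likely to generalize to the globally subanalytic setting that the paper is after, is induction on $\dim M$: approximate along lower-dimensional strata via the inductive hypothesis, and extend across a Nash (respectively definable) tubular neighborhood of the stratum, using cell decomposition to propagate the estimates between adjacent strata.
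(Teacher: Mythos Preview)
The paper does not prove this theorem. Theorem~\ref{thm_ef_original} is stated in the introduction as background: it is Efroymson's original result, attributed to \cite{ef_approx} (with further references \cite{bcr,shiota,shiota2}), and the authors say explicitly that their argument ``does not provide a new proof of Theorem~\ref{thm_ef_original} for we make use of this result.'' Within the paper it functions only as an input, invoked in the proofs of Lemmas~\ref{lem_psiep1} and~\ref{lem_phiep1}. So there is nothing to compare your proposal against here.

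That said, your sketch has a genuine gap. The map $x\mapsto x/\sqrt{1+|x|^2}$ makes $M$ bounded but not compact; already for $M=\R$ one lands in $(-1,1)$, where a continuous semialgebraic $f$ can blow up at the endpoints and $\ep$ can tend to $0$ there. No \emph{finite} semialgebraic open cover $\{U_i\}$ can then satisfy ``oscillation of $f$ on $U_i$ less than $\tfrac{1}{3}\inf_{U_i}\ep$'' for the pieces reaching the frontier, so your constants $g_i=f(x_i)$ do not give the pointwise bound you claim. Even where the oscillation bound holds, the inequality $\sum_i\phi_i(x)\,|f(x)-g_i|<\ep(x)$ needs more: for indices $i$ with $x\notin U_i$ you have no control on $|f(x)-g_i|$, so you must arrange $\phi_i(x)\,|f(x)-g_i|$ small there, not just $\phi_i(x)$ small. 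Finally, beware of circularity: in many treatments (and as this paper notes at the end of its introduction) Mostowski's separation theorem is \emph{derived from} Efroymson's approximation theorem, so you would need an independent proof of the separation statement before leaning on it. The ``alternative strategy'' you mention at the end, via induction along a cell decomposition, is much closer to how the classical proofs actually proceed and to what this paper does in its generalization.
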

Let us recall that a Nash function is a $\C^\om$ semialgebraic function. 
The rigidity of this class of  functions makes this result very attractive. %Nash functions enjoy most of the properties of analytic functions \cite{} and share some of the %interesting features of the polynomials, such as Noetherianity. 
 Shiota, who gave an independent proof of this result \cite{shiota,shiota2}, also achieved a stronger theorem ensuring that, when $f$ is $\C^m$,   it is possible to approximate the $m$ first derivatives as well.

The aim of the present article is to generalize Efroymson's theorem to the globally subanalytic category (Theorem \ref{thm_ef_C_0} below).
Our framework  is however much bigger than this category since our approximation theorems hold every polynomially bounded o-minimal structure expanding the real field that admits $\C^\om$ cell decomposition. In particular, it applies to quasi-analytic Denjoy-Carleman classes \cite{rsw}. These assumptions can be considered as weak since,  although there exist examples of o-minimal structures that do not admit $\C^\om$ cell decomposition \cite{celldec}, such examples are rather difficult to construct and all the basic examples of o-minimal structures do possess this property. 

We assume that the structure is polynomially bounded rather for convenience since it is known that Efroymson's theorem holds in the non polynomially bounded case \cite{fisher_exp}. Indeed, in that case, the exponential function is necessarily in the structure \cite{exp} and one can construct $\C^\om$ definable partitions of unity.

The difficulty to prove Efroymson's theorem in polynomially bounded structures is that the rings of $\C^\om$ definable functions are quasi-analytic, in the sense that the Taylor series of such a function-germ cannot vanish, unless this function-germ is identically zero. Basic techniques of approximation theory, such as partitions of unity, therefore have to be excluded. On the other hand, the known proofs of Theorem \ref{thm_ef_original} heavily rely on the algebraicity of the semialgebraic functions. This is the reason why we adopted a different approach, proving an approximation theorem for definable manifolds (Proposition \ref{lem_plongement}). 

 We should nevertheless emphasize that our argument does not provide  a new proof of Theorem \ref{thm_ef_original} for we make use of this result.  We will also provide a  approximation theorem for $\C^1$ functions (Theorem \ref{thm_ef_C_1}) that can be seen as a generalized version of Shiota's result (see \cite{shiota, shiota2}) and an approximation theorem for Lipschitz functions (Theorem \ref{thm_approx_lips}) with some uniform bounds for the Lipschitz constant of the approximations. This improves the results that were obtained in \cite{escribano,fisher_lips}. We wish to stress the fact that, if 
Theorems \ref{thm_approx_lips} and  \ref{thm_ef_C_1} are of their own interest, they are also definitely needed even if one is only interested in constructing $\C^0$ approximations. The reason is that we shall need to prove an approximation theorem for manifolds, and this kind of result is less difficult to achieve in the $\C^1$ category (see section \ref{sect_plongement}). 

The aim of this article being the generalization of Efroymson's approximation theorem, we only focused on approximations of $\C^0$ and $\C^1$ functions. A systematic generalization of the method (to the order $m$) would however provide a theorem for $\C^m$ functions with the approximation of the $m$ first derivatives, which would then be the subanalytic counterpart of Shiota's theorem \cite{shiota, shiota2}. The cases $m=0$ and $m=1$ are nevertheless satisfying for most of the applications of Efroymson's theorem. In particular, a byproduct of the main theorem of this article is that Mostowski's separation theorem \cite{mostowski, bcr} holds in the subanalytic category. Embedding theorems for $\C^\om$ manifolds or applications to $\C^\om$ triviality of mappings could also be derived.
% Using partitions of unity, on can construct $\C^k$ definable approximations.
% Given $k\in \N$ and $\eta>0$, we set for $x\in \R^n$, $$\alpha_{k,\eta}(x)= \frac{\eta}{1+|x|^{2k}} .$$ It follows from \L ojasiewicz inequality that if $\ep$ is a continuous definable positive function on a closed subset of $\R^n$ then for  $\eta>0$ small enough and $k$ large enough we have $\ep(x)\ge \alpha_{k,\eta}(x)$ for all $x\in \R^n$.(we will be interested in the case $m=0$ and $m=1$)
\begin{section}{Framework and basic facts}
Throughout this article, $n$ and $k$ stand for two integers. Given two functions $f$ and $g$, we write $f<g$  if $f(x)<g(x)$ everywhere both functions are defined. Given a subset $A$ of $\R^n$ on which both $f$ and $g$ are defined, we write ``$f<g$ on $A$'', if  this inequality holds for all $x\in A$.  

Given a function $\xi:A\to \R$, with $A\subset \R^n$, $\Gamma_\xi$ will stand for the graph of $\xi$. If $\xi':A\to \R$ is another function, we set
$$[\xi,\xi']:=\{(x,y)\in A \times \R:\xi(x)\le y\le \xi'(x)\}.$$
The sets $(-\xi,\xi']$ as well as $[-\xi,\xi')$ and $(-\xi,\xi')$ are defined analogously. We also set
$$[\xi,+\infty):=\{(x,y)\in A\times \R:\xi(x)\le y\}$$
and define $(-\infty,\xi]$ as well as $(-\infty,\xi)$ and $(\xi,+\infty)$ analogously.

Given $x\in \R^n$, we denote by $|x|$ the Euclidean norm of $x$ and by $d(x,A)$ the  (Euclidean) distance of $x$ from the subset $A$.    If  $\delta \in \spl(A)$, we can then define a neighborhood of $A$ by setting:
$$\V_{\delta}(A):=\{x\in \R^n: d(x,A)< \delta(x) \}.$$

Unless otherwise specified, by ``manifold'' we will mean ``manifold without boundary''.  Manifolds (without boundary) can however also be considered as manifolds with boundary (which is then empty).
   \subsection{O-minimal structures.}  
   A \textit{structure} (expanding $(\mathbb{R}, + ,.)$) is a family $\mathcal{D} = (\D_n)_{n \in \mathbb{N}}$ such that for each $n$ the following properties hold
\begin{enumerate}
\item [(1)] $\D_n$ is a Boolean algebra of subsets of $\mathbb{R}^n$.
\item[(2)] If $A \in \D_n$ and $B\in \D_k$ then $ A\times B$  belongs to $\D_{n+k}$.
\item[(3)] $\D_n$ contains $\{x \in \mathbb{R}^n: P(x) = 0\}$, where $P \in \mathbb{R}[X_1,\ldots, X_n]$.
\item[(4)] If $A \in \D_n$ then $\pi(A)$ belongs to $\D_{n-1}$, where $\pi: \mathbb{R}^n \to \mathbb{R}^{n-1}$ is the standard projection onto the first $(n-1)$ coordinates.
 \end{enumerate}
 Such a family $\mathcal{D}$ is said to be \textit{o-minimal} if in addition:
 \begin{enumerate}
 \item [(5)] Any set $A\in \D_1$ is a finite union of intervals and points.
 \end{enumerate}
A set belonging to the structure $\mathcal{D}$ is called a {\it definable set} and a map whose graph is in the structure $\mathcal{D}$ is called  a {\it definable map}.

A structure $\mathcal{D}$ is said to be \textit{polynomially bounded} if for each $\mathcal{D}$-function $f: \mathbb{R}\to \mathbb{R}$, there exists a positive number $a$ and  $p \in \mathbb{N}$ such that $|f(x)| < x^p$ for all $x > a$.

Examples of polynomially bounded o-minimal structures are the semi-algebraic sets, the globally subanalytic sets \cite{dd,lr} but also the so called $x^\lambda$-sets \cite{xlambda,lr} as well as the structures defined by the quasi-analytic Denjoy-Carleman classes of functions \cite{rsw}. We refer to \cite{coste_bleu,dries_omin} for basic facts about o-minimal structures.

We say that an o-minimal structure {\it admits $\C^\om$ cell decomposition} if for each definable function $f:U\to \R $, $U$ open subset of $\R^n$, there is a definable open dense subset of $V$ of $U$ on which $f$ is $\C^\om$.

All the just above mentioned examples of polynomially bounded o-minimal structures  admit  $\C^\om$ cell decomposition. Our theorems will therefore apply to these structures.

Throughout this article, $\mathcal{D} = (\D_n)_{n \in \mathbb{N}}$  will stand for a fixed polynomially o-minimal structure (expanding $(\mathbb{R}, + ,.)$)  admitting $\C^\om$ cell decomposition. The term definable will refer to this structure. Since {\it all the sets and mappings will be definable}, we will often omit to mention it in the proofs. For the sake of completeness of statements, we however have chosen to emphasize it in the definitions, propositions, and theorems.

Given  $A\in \D_n$ and $B\in \D_k$, we denote by $\D(A,B)$ the set of  definable  mappings $\xi:A\to B$ and by $\D(A)$ the set of  definable functions $\xi:A\to \R$, whereas  $\spl(A)$ will stand for the set of positive definable continuous functions on the set $A$. 

Given $m\in \N\cup \{\infty\}$, we write $\D^m(A)$ (resp. $\D^m(A,B)$) for the elements of $\D(A,B)$  (resp. $\D(A)$) that are $\C^m$ (a function will be said to be $\C^m$ on  $A$ if it extends to a $\C^m$ function on an open neighborhood of $A$ in $\R^n$).

\begin{subsection}{Efroymson's  topology.} 
 %Given a mapping $\xi:A\to B$, with $A\subset \R^n$, $B\subset \R^k$, we denote by $|\xi|$ the function which assigns to $x\in \R^n$ the number $|\xi(x)|$.
Given  a mapping $\xi:A\to B$, with $A\in \D_n$, $B\in \D_k$, and $\ep \in \spl(A)$ we set: $$\U_\ep^0(\xi)=\{\zeta\in \St^0(A,B):  |\xi-\zeta|<\ep\}.$$
The {\it $\C^0$ Efroymson topology} on $\D^0(M,N)$ is the topology 
 for which a basis of neighborhoods of $f$ is given by the family $(\U_\ep^0(f))_{\ep \in \spl(A)}$. Efroymson's theorem (Theorem \ref{thm_ef_original}) yields that $\D^\om(A)$ is dense in $\D^0(A)$ in the semialgebraic category (for each definable set $A$).

Let now $M\subset \R^n$ be a $\C^1$ definable submanifold (possibly with boundary) and let $f\in \D^1(M,N)$, where $N$ is a $\C^1$ definable submanifold of $\R^k$ (possibly with boundary).  We will write $|d_xf|$ for the norm of $d_xf$ (as a linear mapping) derived from the Euclidean norm $|.|$.   We will write $|df|$ for the function defined by $x\mapsto |d_xf|$. We then set $$|f|_1:=|f|+|df|,$$ 
% This does not define a norm on the space of mappings since $|f|_1$ is not a real number but a function. The Efroymson $\C^1$ topology is however derived from $|.|_1$ as follows
 and, given $\ep \in \spl(A)$,$$\U_\ep^1(f)=\{g\in \St^1(M,N):  |f-g|_1<\ep\}.$$
The {\it $\C^1$ Efroymson topology} on $\D^1(M,N)$ is the topology 
 for which a basis of neighborhoods of $f$ is given by the family $(\U_\ep^1(f))_{\ep \in \spl(M)}$.  It was proved by M. Shiota that 
 $\D^\infty(M,N)$ is dense in $\D^1(M,N)$ in the semialgebraic category  \cite{shiota, shiota2} whereas Escribano showed that $\D^m(M,N)$ is dense in $\D^l(M,N)$  for  $0\le l\le m<\infty $ \cite{escribano}. 

%Given $x\in M$, we endow $T_x M$ with the inner product induced by $\R^n$ and, if $f$ is differentiable, $df(x):T_x M \to \R$ will stand for the derivative of $f$ at $x$.

Let for $j\in \N$ positive and $x\in \R^n$ \begin{equation}\label{eq_delta_j}
\delta_j (x)    := \frac{1}{(1+j^2)(1+|x|^{2j})}.
    \end{equation}   Since the $\delta_j$ are Nash functions, they all belong to $\St^\infty(\R^n)$.
    
Remark also that,  since the structure is assumed to be polynomially bounded, by \L ojasiewicz inequality, the sequences $\dej$ and $ |d\delta_j|$   converge to zero (in $\D^0(\R^n)$). In particular, for every $\eta\in \spn$ there is $j$ such that $\eta\ge \delta_j$. As a matter of fact, a function $f:M\to N$ lies in the closure of a set $Z\subset \D^1(M,N)$ if and only if there is a sequence $g_j\in Z$ such that $|f-g_j|_1<\dej$, for all $j$. Of course, the analogous fact can be observed for the $\C^0$ Efroymson topology.

 We now state our approximation theorem:

\begin{thm}\label{thm_ef_C_0}
For every $\C^\infty$ definable submanifold $M$ of $\R^n$,  $\D^\om(M)$ is dense in $\D^0(M)$ for the $\C^0$ Efroymson topology.
\end{thm}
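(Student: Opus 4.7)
The plan is to deduce Theorem \ref{thm_ef_C_0} from the manifold approximation result (Proposition \ref{lem_plongement}) by a graph argument, with Theorem \ref{thm_ef_C_1} (or rather a $\C^0$-to-$\C^1$ variant of it) serving to bridge between the $\C^0$ and $\C^1$ settings. Fix $f\in \D^0(M)$ and $\ep\in \spl(M)$, and look for $g\in \D^\om(M)$ with $|f-g|<\ep$ on $M$.

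The first step is to produce a preliminary $\C^1$ approximation. Approximating a continuous definable function by a $\C^1$ definable function in the $\C^0$ Efroymson topology is much easier than the main statement: it can be carried out via the techniques behind Theorem \ref{thm_approx_lips}, by first approximating $f$ by a Lipschitz definable function (with uniform control of the Lipschitz constant) and then smoothing that approximation without spoiling the $\C^0$ bound. This yields $f_1\in \D^1(M)$ with $|f-f_1|<\ep/3$ on $M$.

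The second step applies Proposition \ref{lem_plongement} to the graph $M_1:=\Gamma_{f_1}\subset \R^{n+1}$, which is a $\C^1$ definable submanifold of dimension $\dim M$ projecting diffeomorphically onto $M$ via the canonical projection $\pi:\R^{n+1}\to \R^n$. The proposition produces a $\C^\om$ definable submanifold $M_g\subset \R^{n+1}$ contained in an arbitrarily narrow tubular neighborhood of $M_1$ and with tangent spaces arbitrarily close to those of $M_1$. If the closeness parameter is chosen small enough in terms of $\ep$ and of $|df_1|$, the projection $\pi$ will still restrict to a $\C^\om$ diffeomorphism from $M_g$ onto $M$; hence $M_g=\Gamma_g$ for some $g\in \D^\om(M)$ with $|f_1-g|<\ep/3$, and the triangle inequality finishes the proof.

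The main obstacle is the transversality control in the second step. Since $M$ is typically non-compact, and both $|df_1|$ and the width of a normal tube around $M_1$ may degenerate at infinity or near the boundary of $M$, the closeness parameter cannot be a global constant but must be prescribed as a positive definable function $\eta\in \spl(M_1)$. One has to produce such an $\eta$ (depending on $\ep$, on $|df_1|$, and on a modulus of transversality of $M_1$ to the vertical fibres) that simultaneously guarantees that $\pi_{|M_g}$ is injective, proper, of maximal rank, and surjective onto $M$, while still yielding the $\ep/3$ bound on $|f_1-g|$. This is exactly the type of uniform definable bookkeeping that Proposition \ref{lem_plongement} is designed to accommodate, and it also explains why the $\C^1$ strengthening (Theorem \ref{thm_ef_C_1}) is indispensable even for a purely $\C^0$ approximation statement.
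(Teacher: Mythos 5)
Your architecture differs from the paper's, which disposes of Theorem \ref{thm_ef_C_0} in one line: $\D^1(M)$ is dense in $\D^0(M)$ for the $\C^0$ Efroymson topology (Escribano's theorem, cited in the text), and Theorem \ref{thm_ef_C_1} then upgrades the $\C^1$ approximation to a $\C^\om$ one; the graph of $f_1$ never appears. Your first step reaches the correct intermediate statement, but not by the route you sketch: an unbounded continuous $f$ (e.g.\ $f(x)=1/x$ on $M=(0,1)$) admits no $\ep$-close globally Lipschitz approximation when $\ep$ is bounded, since a Lipschitz function on a bounded set is bounded. So ``first approximate by a Lipschitz function'' cannot be the opening move; one should simply invoke the density of $\D^1(M)$ in $\D^0(M)$, which is what the paper does.

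The genuine gap is in your second step. The statement you need is Proposition \ref{pro_plongement} (Proposition \ref{lem_plongement} is the closed-embedding corollary and says nothing about approximating a given manifold), and it applies to \emph{closed} $\C^2$ submanifolds: its proof uses a tubular neighbourhood retraction, which requires $\C^2$ regularity, whereas $\Gamma_{f_1}$ is only $\C^1$ and is closed in $\R^{n+1}$ only when $M$ is closed in $\R^n$. More seriously, even after repairing regularity and closedness, no choice of the closeness function $\eta$ forces $\pi(M_g)=M$: the proposition makes $M_g$ diffeomorphic to $\Gamma_{f_1}$ via the tubular retraction of $\Gamma_{f_1}$, not via the vertical projection, and $\pi(M_g)$ is an open set that is merely close to $M$ (already for $M=(0,1)\subset\R$ and $f_1\equiv 0$, a curve $\eta$-close to $(0,1)\times\{0\}$ need not project exactly onto $(0,1)$). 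Hence the function you read off the graph is not defined on $M$, and the surjectivity, injectivity and properness of $\pi|_{M_g}$ that you list as ``the main obstacle'' are asserted rather than proved — and surjectivity is in fact false without a further correction, such as composing with a $\C^\om$ tubular retraction of $M$ and reparametrizing. At that point you have essentially reconstructed Proposition \ref{pro_closed}, which reaches the goal far more directly by extending $f_1$ to $\R^n$, approximating there via Proposition \ref{pro_rn}, and retracting back. The clean repair is to replace your second step by the appeal to Theorem \ref{thm_ef_C_1}.
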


   In other words, given any $f\in \D^0(M)$ and $\ep \in \spl(M)$, there is $g\in\D^\om(M)$ such that $|f-g|<\ep$.  As $\D^1(M)$ is dense in $\D^0(M)$, this theorem follows from Theorem \ref{thm_ef_C_1}, which asserts that $\D^\om(M)$ is dense in $\D^1(M)$ (for the $\C^1$ Efroymson topology).

% \begin{dfn}\label{dfn_cell_decomposition}
% A {\it cell decomposition of $\R^n$} is a finite partition $\C$ of $\R^n$ into subanalytic sets, called {\it cells}, satisfying certain properties explained below.
% 
% \medskip
% 
% $n = 1:$ A {\it cell decomposition of $\R$} is given by a finite subdivision $a_1 < .. . < a_l$ of
% $\R$. The {\it cells of $\R$} are the singletons $\{a_i\}$, $0 < i \leq l$, and the intervals $(a_i,
% a_{i+1})$,
% $0 \leq  i \leq l$, where $a_0 = -\infty$ and $a_{l+1} = +\infty$.
% 
% \medskip
% 
% $n > 1:$ A {\it cell decomposition $\C$ of $\R^n$} is given by a cell decomposition $\D$ of $\R^{n-1}$ and,
% for each cell $D \in \D$, some globally subanalytic functions, analytic on $D$:
% $$\zeta_{D,1} < ... < \zeta_{D,l(D)} : D \to \R.$$
% 
% \medskip
% 
% The {\it cells of $\C$} are the analytic manifolds given by the  graphs 
% $$\{(x, \zeta_{D,i}(x)) : x \in D\} ,\; 1 \le i \leq  l(D) ,$$
% and the {\it bands}
% $$(\zeta_{D,i}, \zeta_{D,i+1}) := \{(x, y) : x \in D
% \mbox{ and }\zeta_{D,i}(x) <y<\zeta_{D,i+1}(x)\},$$
% for $0 \leq  i \leq l(D)$,where $\zeta_{D,0} \equiv -\infty$ and $\zeta_{D,l(D)+1} \equiv +\infty$. The cell $D$ is then called the {\it basis} of the cells defined as above. %\index[not]{$(\xi_1,\xi_2)$} 
% 
% 
% It follows from the above definition that every cell $C$ is a $\C^\om$ submanifold of $\R^n$. 
% \end{dfn}
%  
\end{subsection}

 \begin{subsection}{Lipschitz functions.}  A function $\xi:A\to \R$ is {\it Lipschitz} if it is Lipschitz with respect to the metric induced by $\R^n$, i.e., if there is a constant $L$ such that $|\xi(x)-\xi(x')|\le L|x-x'|$. The smallest such constant is then called the {\it Lipschitz constant of $\xi$} and is denoted $L_\xi$. Every Lipschitz function $\xi\in \D(A)$, $A\subset \R^n$, may be extended to an $L_\xi$-Lipschitz function  $\xib\in \D(\R^n)$ defined as
 $$\xib(x):=\inf \{\xi(y)+L_\xi|x-y|: y\in A\} .$$ 
 
 A straightforward computation yields the following fact that we will use all along this article: if $\xi$ is Lipschitz we have for $x=(\xt,x_n)\in \R^{n-1}\times \R$ \begin{equation}\label{eq_dist_au_graph}
                                                    |x_n-\xi(\xt)
  |\le (L_\xi+1)d(x,\Gamma_\xi).
                                                   \end{equation}

                                                    The following related fact will be of service.
                                                    \begin{lem}\label{lem_v_w}
 Given a positive constant $L$, there are constants $0<c<\kappa<1$ such that for all $L$-Lipschitz functions $\delta\in\spn$ and  $\xi\in\drn $ we have:
 $$  \V_{c \delta}(\Gamma_{\xi})\;\subset\;[\xi-\kappa\sigma,\xi+\kappa\sigma]\;\subset \;\V_{ \delta}(\Gamma_{\xi}),$$
 where for $x \in \R^{n-1}$ we have set    $\sigma(x):=\delta(x,\xi(x))$.  
\end{lem}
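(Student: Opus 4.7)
The strategy is to leverage the $L$-Lipschitz property of both $\delta$ and $\xi$: Lipschitz control on $\delta$ lets one compare $\delta(\tilde{x},y)$ with $\sigma(\tilde{x})=\delta(\tilde{x},\xi(\tilde{x}))$ along a vertical fibre, while Lipschitz control on $\xi$ lets one compare the vertical distance $|y-\xi(\tilde{x})|$ with the Euclidean distance $d\!\left((\tilde{x},y),\Gamma_\xi\right)$ via the estimate \eqref{eq_dist_au_graph} already recorded above. Once these two ingredients are in place, each inclusion reduces to solving an elementary linear inequality, and the constants $\kappa$ and $c$ can be extracted from $L$ alone.

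For the outer inclusion $[\xi-\kappa\sigma,\xi+\kappa\sigma]\subset \V_\delta(\Gamma_\xi)$, I would start with a point $z=(\tilde{x},y)$ satisfying $|y-\xi(\tilde{x})|\le\kappa\sigma(\tilde{x})$. Vertical projection onto $\Gamma_\xi$ gives the cheap bound $d(z,\Gamma_\xi)\le|y-\xi(\tilde{x})|$, while the $L$-Lipschitz property of $\delta$ yields
$$\delta(z)\;\ge\;\sigma(\tilde{x})-L|y-\xi(\tilde{x})|\;\ge\;(1-L\kappa)\,\sigma(\tilde{x}).$$
Requiring $\kappa<\frac{1}{L+1}$ then forces $\kappa<1-L\kappa$, and as $\sigma>0$ one obtains $d(z,\Gamma_\xi)\le\kappa\sigma(\tilde{x})<\delta(z)$, which is the desired strict inequality.

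For the inner inclusion $\V_{c\delta}(\Gamma_\xi)\subset[\xi-\kappa\sigma,\xi+\kappa\sigma]$, take $z=(\tilde{z},z_n)$ with $d(z,\Gamma_\xi)<c\delta(z)$. Applying \eqref{eq_dist_au_graph} to the $L$-Lipschitz $\xi$ gives $|z_n-\xi(\tilde{z})|<c(L+1)\,\delta(z)$, and the Lipschitz property of $\delta$ provides $\delta(z)\le\sigma(\tilde{z})+L|z_n-\xi(\tilde{z})|$. Substituting and isolating $|z_n-\xi(\tilde{z})|$ then yields, provided $cL(L+1)<1$,
$$|z_n-\xi(\tilde{z})|\;<\;\frac{c(L+1)}{1-cL(L+1)}\,\sigma(\tilde{z}).$$
It suffices to pick $c$ so that this coefficient is $\le\kappa$ and so that $c<\kappa$. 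A concrete uniform choice is $\kappa:=\frac{1}{2(L+1)}$ followed by $c:=\frac{\kappa}{(L+1)(1+\kappa L)}$, which obviously depends only on $L$.

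There is no real obstacle beyond correctly ordering the two choices: $\kappa$ must be fixed first (from the outer inclusion), and $c$ is then determined in terms of $\kappa$ and $L$ from the inner one. The main point worth emphasizing is that both resulting thresholds depend only on the Lipschitz constant $L$, so the same pair $(c,\kappa)$ works simultaneously for all admissible $\delta$ and $\xi$, which is exactly what the lemma asserts.
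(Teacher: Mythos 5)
Your proof is correct and follows essentially the same route as the paper's: the Lipschitz property of $\delta$ is used to compare $\delta(\xt,x_n)$ with $\sigma(\xt)$ along vertical fibres, and (\ref{eq_dist_au_graph}) converts the Euclidean distance to $\Gamma_\xi$ into the vertical distance, with $\kappa$ fixed first and $c$ chosen afterwards. The only difference is cosmetic: you track explicit constants where the paper argues with ``for $\kappa$ (resp.\ $c$) small enough'' and factor-of-two bounds.
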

\begin{proof}
We first check that the second inclusion holds for $\kappa$ small enough and then we will check that the first one holds for $c$ small enough (with $\kappa$ fixed). Observe that if $x=(\xt,x_n)\in \R^{n-1}\times \R$ satisfies $|\xi(\xt)-x_n|\le \kappa\sigma(\xt)$ then $\delta(\xt,\xi(\xt))-\delta(x)\le L\kappa\sigma(\xt)$ which implies  for $\kappa\le \frac{1}{2L}$    that $\sigma(\xt)\le 2\delta(x)$ so that for $\kappa< \frac{1}{2}$ $$d(x,\Gamma_{\xi})\le |\xi(\xt)-x_n|\le \kappa\sigma(\xt)\le 2\kappa\delta(x)<\delta(x),$$
yielding the right-hand-side inclusion.
 Let us now fix a $\kappa$ sufficiently small and show the left-hand-side inclusion.
 If $x=(\xt,x_n)\in \R^n$ satisfies $d(x,\Gamma_{\xib})<c \cdot \delta(x)$ for some $c$ then, by (\ref{eq_dist_au_graph})
\begin{equation}\label{eq_pr_dej_dejp}
 |x_n-\xi(\xt)|\le (L_{\xi}+1) c \cdot \delta(x)
\end{equation}
     so that for $c$ small enough
$$|\delta(x)-\delta(\xt,\xi(\xt))|\le |x_n-\xi(\xt)|\le (L_{\xi}+1) c\cdot \delta(x) <\frac{1}{2}\delta(x), $$
 which implies that $\delta(x)\le 2\sigma(\xt)$ for such $x=(\xt,x_n)$, which, by (\ref{eq_pr_dej_dejp}), entails in turn that for $c$ small enough
 $|x_n-\xi(\xt)|<\kappa \sigma(\xt). $
\end{proof}

 \begin{dfn}\label{dfn_lipschitz_cells}
 We are going to define the Lipschitz cells of $\R^n$, which requires to first define the cells of $\R^n$ inductively on $n$. Every  subset of $\R^0=\{0\}$  is a cell. A definable subset $C$ is a cell of $\R^{n}$  if there is a cell $D$ of $\R^{n-1}$ such that one of the following conditions holds:
  \begin{enumerate}
   \item\label{item_type_1}  $C=\Gamma_{\xi}$ with  $\xi\in \D^\infty(D)$.
   \item\label{item_type_2}  $C=(\xi,\xi')$, where $\xi$ is either equal to $-\infty$ or  a $\C^\om$   function on $D$, and  $\xi'$ is either equal $+\infty$ or a $\C^\om$   function  on $D$ satisfying $\xi< \xi'$.
  \end{enumerate}
We then call $D$ the {\it basis of $C$}. Now, a cell $C$ is a {\it Lipschitz cell} if its basis $D$ is a Lipschitz cell and if in addition the functions appearing in its description (in $(\ref{item_type_1} )$ or $(\ref{item_type_2} )$ above) are Lipschitz.  
A {\it Lipschitz open cell of $\R^n$} is a Lipschitz cell of $\R^n$ of dimension $n$.

  % open Lipschitz
%  so is $D$ and if in addition the functions describing $C$ (see (\ref{eq_cell1}) and (\ref{eq_cell2})) are Lipschitz.
 %
 %By definition, it can be written either  $\Gamma_{\xi}$, with, or  In the case $C=\Gamma_\xi$ we say that $C$ is {\it Lipschitz} if $\xi$ is Lipschitz and, in the case $C=(\xi,\xi')$, we say that $C$ is Lipschitz if  $\xi$ (resp. $\xi'$)  is either $-\infty$ (resp. $+\infty$) or a Lipschitz function. 

 If $C$ is a Lipschitz open cell of $\R^n$ and $\eta \in \spn$, we set:
 \begin{equation}\label{eq_w_eta}
  \W_{\eta}(C):=C\setminus \V_\eta(\R^n\setminus C) .\end{equation}
 \end{dfn}

 The following result is a well-known fact about definable sets which relies on the assumption of existence of $\C^\infty$ cell decompositions.  The stratification provided by this proposition will be useful in the proof of Theorem \ref{thm_approx_lips} and Proposition \ref{pro_rn}. Let us recall that a {\it stratification}  of a set is a finite partition of this set into $\C^\om$ manifolds, called {\it strata}, 
such that the closure of one stratum is the union of some strata.
\begin{pro}\label{pro_stratification}
 Given $f\in \D(\R^n)$, there is a stratification of $\R^n$ such that $f$ is $\C^\infty$ on every stratum and such that each stratum is a Lipschitz cell after a possible orthonormal change of coordinates.
\end{pro}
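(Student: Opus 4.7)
I would proceed by induction on $n$, combining the hypothesis of $\C^\om$ cell decomposition with a rotation argument to upgrade the cells to Lipschitz cells. The case $n=0$ is trivial. For the inductive step, first apply $\C^\om$ cell decomposition (which is available by our standing assumption) to produce a finite partition of $\R^n$ into $\C^\om$ cells compatible with $f$, i.e.\ on each of which $f$ is $\C^\om$. This partition can be chosen to be a stratification in the usual way (inducting over dimensions, refining so that closures of cells are unions of cells). It remains to refine each cell so that, after an orthonormal change of coordinates, it becomes a Lipschitz cell in the sense of Definition \ref{dfn_lipschitz_cells}.

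The core technical step is the following rotation lemma: given a $\C^\om$ cell $C\subset \R^n$ of the form $(\xi,\xi')$ over a base $D\subset \R^{n-1}$, the defining functions $\xi, \xi'$ may have gradients that blow up near $\partial D$ and hence fail to be Lipschitz. However, by o-minimality the Gauss map of $\Gamma_\xi$ is a definable map into $S^{n-1}$, so its image is a definable subset. By partitioning $D$ according to which region of $S^{n-1}$ the normal direction falls in, one may assume that on each piece of $D$ the unit normal to $\Gamma_\xi$ lies in a closed cone strictly separated from some vector $v\in S^{n-1}$. Rotating coordinates so that $v$ becomes the vertical axis, the piece of $\Gamma_\xi$ becomes the graph of a function whose gradient is uniformly bounded, hence Lipschitz. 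Doing this simultaneously for $\xi$ and $\xi'$ (and for all the finitely many $\C^\om$ functions that appear in the cell decomposition) and applying the inductive hypothesis to $D$ yields the desired refinement of the single cell $C$.

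Finally, the outputs of these individual refinements must be patched together into a single stratification. This is done by once again invoking cell decomposition (in the rotated coordinates for each piece) and by taking a common refinement that respects the frontier condition; the polynomial boundedness of the structure enters here to ensure, via \L ojasiewicz's inequality, that finitely many subdivisions and rotations suffice to control the gradients.

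\textbf{Main obstacle.} The delicate part is keeping everything definable and finite while juggling different orthonormal coordinate systems on different pieces. The rotations that tame $\xi$ above one piece of $D$ will generally destroy the cell structure of the base in other pieces, so one must be careful to perform the rotation only \emph{after} sufficient subdivision of $D$, and to check that the rotated piece is indeed a Lipschitz cell whose base is itself a Lipschitz cell (which is what the induction provides). The frontier condition for the global stratification is an additional bookkeeping task, standard in o-minimal geometry but non-trivial to execute cleanly in combination with the changes of coordinates.
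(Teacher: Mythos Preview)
The paper does not actually prove this proposition: it is stated as ``a well-known fact about definable sets which relies on the assumption of existence of $\C^\infty$ cell decompositions'' and is used without proof. So there is no argument in the paper to compare your proposal against.

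That said, your outline is the standard route to this kind of result (essentially the construction of $L$-regular or Lipschitz cell decompositions in o-minimal geometry, going back to Kurdyka and Parusi\'nski): decompose, then rotate each piece so that the defining graphs have bounded gradient, and induct on the base. The Gauss-map argument you sketch is exactly the right mechanism for choosing the rotation.

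One correction: your final paragraph invokes polynomial boundedness and \L ojasiewicz's inequality ``to ensure\dots finitely many subdivisions and rotations suffice to control the gradients.'' This is not how finiteness is obtained, and polynomial boundedness plays no role in this proposition (the result holds in any o-minimal structure admitting smooth cell decomposition). Finiteness comes directly from o-minimality: the image of the Gauss map is a definable subset of $S^{n-1}$, and a \emph{fixed} finite cover of $S^{n-1}$ by closed cones (each avoiding some hyperplane) pulls back to a finite definable partition of the base. No growth estimate is needed. Apart from this misattribution, the plan is sound and the ``main obstacle'' you identify is genuine but handled by the usual bookkeeping for compatible refinements.
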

\end{subsection}

 \begin{subsection}{Pullback and pushforward.} Given two  mappings $h:\R^p\to \R^n$ and  $u:U\to  \R^k$, with $U\subset \R^n$, we denote by $h^*u:h^{-1}(U) \to  \R^k$ the mapping induced by pull-back, i.e., the mapping defined by $h^*u(x):=u(h(x))$.
 
 In the case where $h$ is a homeomorphism (onto its image) and $v:V\to \R^k$ is a mapping with $V\subset \R^p$ we denote by $h_*v$ the push-forward of $v$, which is merely the pull-back of $v$ under $h^{-1}$. 
%  Define a linear operator \begin{equation}\label{eq_def_Lambda}
%                       \Lambda_h:\C^m(N)\to \C^m(M), \quad (h,f)\mapsto \Lambda_h(f):= f\circ h.
%                      \end{equation}

%  The pull-back of functions enjoys the following Lipschitzness property. For every $u$ and $v$ in $\St^1(U) $, $U\in \D_n$,  we have on $h^{-1}(U)$:
%  \begin{equation}\label{eq_lips_Lambda_h}|dh^*u-dh^*v|\le |dh| \cdot |du-dv|\circ h.\end{equation}
% Moreover, if $h$ stays in a bounded set of $\St^m(\R^p,\R^n)$ then $L_h$ remains in a bounded subset of $\St^+(\R^p)$. %In particular, if $K$ is a bounded subset of  $\C^\infty(M,N)$ then $\Lambda_h$, $h\in K$, is equicontinuous.  We however have a property of uniform continuity with respect to $u$ in the following sense.  
% 
%  The analogous Lipschitzness property with respect to $h$ does not hold.
%  We however have a continuity property in the following sense. Let $u\in \D^1(V)$, $V$ closed subset of $\R^n$, $\ep\in \spn$, and let $K$ denote a bounded subset $\D^1(\R^p,\R^n)$. For $\delta\in \spl(\R^p)$  sufficiently small we have for all mappings $h:\R^p\to \R^n$ and $h':\R^p\to \R^n$ in $K$: \begin{equation}\label{eq_lips_Lambda_u}
% |h-h'|_1<\delta \Rightarrow \forall\, x\in h^{-1}(V)\cap h'^{-1}(V),\quad |h^* u-h'^*u|_1<\ep.                                                                                                                                                                                                                                         \end{equation}

\end{subsection}

\end{section}
\begin{section}{Some preliminary lemmas}
 The lemmas listed in this section provide tools to construct definable $\C^\om$ functions. They rely on Efroymson's approximation theorem and elementary facts.    We recall that a $\C^\infty$ semialgebraic function is called {\it a Nash function}.
% $$\Delta=\{c=(a,b)\in \R^2: a<b\}.$$
\begin{lem}\label{lem_psiep1}
There is a Nash function $\psi:(0,1)^2\times \R \to [0,3)$ satisfying  %such that for all $c\in \R$ and all $(\mu,\delta,\ep)\in (0,1)^2$  we have
%for each $(a,b,z)\in \R^2\times (0,1)$, the one-variable function $\psi_{a,b,z}:\R\to \R$  defined as $\psi_{a,b,z}(x)=\Phi(x,a,b,z)$ satisfiesFor all $c=(a,b)\in \Delta$ and all $y\in (0,1)^2$ we have 
\begin{enumerate}
 \item\label{item_psi_ge_1} $\psi(\mu,\delta,y)>1$ whenever  $(\mu,\delta,y)\in(0,1)^2\times (-\infty,0] $. 
 \item\label{item_mu} $\psi(\mu,\delta,y) <\mu $ and $|d\psi(\mu,\delta,y)| <\mu $ whenever  $y \in [\delta,+\infty)$. 
 \item There is a constant $A$ such that for all  $(y,\delta,\mu)$ we have 
 \begin{equation}\label{eq_der_psi}|d\psi(\mu,\delta,y)|\leq A\; \mbox{ if } \;y\le 0 \et|d\psi(\mu,\delta,y)|\leq \frac{A}{\delta} \;\mbox{ if }\; y\ge 0.\end{equation}
\end{enumerate}
\end{lem}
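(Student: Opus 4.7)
The plan is to construct $\psi$ in two stages. First, I would write down an explicit $\C^{1}$ semialgebraic model $\psi_{0}$ realizing every required inequality with a comfortable margin. Then I would apply Shiota's semialgebraic $\C^{1}$ density theorem (\cite{shiota, shiota2}, recalled above) to replace $\psi_{0}$ by a Nash function $\psi$ so close to $\psi_{0}$ in the $\C^{1}$ Efroymson topology that all the inequalities persist.

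For the model, let
\[
h(t)=\begin{cases} 1 & t\le 0,\\ 1-3t^{2}+2t^{3} & t\in[0,1],\\ 0 & t\ge 1, \end{cases}
\]
which is $\C^{1}$ and semialgebraic, with $h'(0)=h'(1)=0$, $h'$ supported in $[0,1]$, and $|h'|\le 3/2$. I would then set
\[
\psi_{0}(\mu,\delta,y):=\frac{\mu^{2}}{4}+\Bigl(2-\frac{\mu^{2}}{4}\Bigr)\, h(y/\delta),
\]
a $\C^{1}$ semialgebraic function on $(0,1)^{2}\times\R$ with values in $[\mu^{2}/4,2]$. An elementary computation then yields: on $\{y\le 0\}$, $\psi_{0}\equiv 2$ and $|d\psi_{0}|=0$; on $\{y\ge\delta\}$, $\psi_{0}=\mu^{2}/4$ and only $\partial_{\mu}\psi_{0}=\mu/2$ is nonzero, so $|d\psi_{0}|=\mu/2$; on $\{0\le y\le\delta\}$ one gets $|d\psi_{0}|\le 5/\delta$ from $|h'|\le 3/2$ and $|y/\delta|\le 1$.

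Next, I would apply Shiota's theorem to $\psi_{0}$ with the positive continuous semialgebraic error function $\ep(\mu,\delta,y):=\mu^{2}/10$ to obtain a Nash $\psi$ with $|\psi-\psi_{0}|+|d\psi-d\psi_{0}|<\mu^{2}/10$ everywhere on $(0,1)^{2}\times\R$. The required inequalities then follow by the triangle inequality, with $A=6$: for instance $\psi>2-\mu^{2}/10>1$ on $\{y\le 0\}$; $\psi<\mu^{2}/4+\mu^{2}/10<\mu$ and $|d\psi|<\mu/2+\mu^{2}/10<\mu$ on $\{y\ge\delta\}$; $|d\psi|<\mu^{2}/10\le 1/10$ on $\{y\le 0\}$; and $|d\psi|<5/\delta+\mu^{2}/10\le 6/\delta$ on $\{y\ge 0\}$ (using $\delta\le 1$). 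The codomain condition $\psi\in[0,3)$ drops out of $\psi_{0}\in[\mu^{2}/4,2]$ combined with the $\mu^{2}/10$ error bound.

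The hard part is the condition $|d\psi|<\mu$ on $\{y\ge\delta\}$, which forces an arbitrarily small gradient as $\mu\to 0$. The key trick is to take the lower plateau of $\psi_{0}$ to be $\mu^{2}/4$ rather than a linear-in-$\mu$ value: the quadratic dependence makes $\partial_{\mu}\psi_{0}=\mu/2$ automatically smaller than $\mu$, leaving just enough margin to absorb a $\C^{1}$ approximation error of order $\mu^{2}$ while preserving the bound.
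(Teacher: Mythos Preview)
Your proof is correct and follows essentially the same approach as the paper: build a $\C^{1}$ semialgebraic step function in $y/\delta$ with lower plateau $\mu^{2}/4$, then apply the semialgebraic $\C^{1}$ approximation theorem with an error of size comparable to $\mu$. The paper's model is the slightly simpler $u(\mu,\delta,y)=\theta(y/\delta)+\mu^{2}/4$ (with $\theta$ a $\C^{1}$ step from $2$ to $0$) and uses the coarser error bound $\mu/2$, but your variant with the $(2-\mu^{2}/4)$ factor and the tighter $\mu^{2}/10$ error works just as well; the key trick---making the lower plateau quadratic in $\mu$ so that $\partial_{\mu}$ stays below $\mu$ with room to spare---is identical.
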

\begin{proof}
 Let $\theta:\R \to [0,2]$ be a $\C^1$ piecewise polynomial function satisfying $\theta(x)=2$ if $x\le 0$ and $\theta(x)=0$ if $x\ge 1$. For $(\mu,\delta,y)\in (0,1)^2\times \R$ set $$u(\mu.\delta,y):=\theta(\frac{y}{\delta})+\frac{\mu^2}{4}. $$ 
 We first check that $|du(\mu,\delta,y)|\le \frac{A}{\delta}$, for some constant $A$ independent of $(\mu,\delta,y)$. Indeed, a simple computation of partial derivative yields such an estimate for $\frac{\pa u}{\pa \mu} $ and $\frac{\pa u}{\pa y}$. Moreover,
 if $y \notin [0,\delta]$ then $\theta'(\frac{y}{\delta})=0$ which implies that $\frac{\pa u}{\pa \delta}
 (\mu,\delta,y)=0$ (for all $\mu$). We thus can assume $y \in [0,\delta]$, which entails $\frac{y}{\delta^2}\le \frac{1}{\delta}$, and we can conclude that there is a constant $A$ such that  $$|\frac{\pa u}{\pa \delta}| =\frac{|y|}{\delta^2}\cdot |\theta'(\frac{y}{\delta})|\le \frac{A}{\delta} .$$ 
 
%We can regard $u$ as a function defined on $(0,1)^2\times \R^2$, constant with respect to the first variable.
By Efroymson's Theorem, we can find a Nash function $\psi$ on $(0,1)^2\times \R$ such that $|u(\mu,\delta,y)-\psi(\mu,\delta,y)|_1<\frac{\mu}{2}$. This function has all the required properties. 
\end{proof}
% \begin{lem}\label{lem_psiep2}
% There is a $\C^\om$ function $\psi:\R\times \Delta \times (0,1)^2 \to \R$ such that for all $c=(a,b)\in \Delta$ and all $(z,t)\in (0,1)^2$  we have
% %for each $(a,b,z)\in \R^2\times (0,1)$, the one-variable function $\psi_{a,b,z}:\R\to \R$  defined as $\psi_{a,b,z}(x)=\Phi(x,a,b,z)$ satisfiesFor all $c=(a,b)\in \Delta$ and all $y\in (0,1)^2$ we have 
% \begin{enumerate}
%  \item $\psi(x,c,z,t) \in (1,2)$, if $x  \in [a,b]$. 
%  \item   $\psi(x,c,z,t) <\eta (x)t$, for all  $x\in \R \setminus [a-z,b+z]$. 
%  \item  $|D^i \psi(x,c,z,t)|\leq A z^{-i}$, for each $i\le m$, for some constant $A$ independent of $(x,c,z,t)$.
% \end{enumerate}
% 
% \end{lem}
% \begin{proof}
%  Define two functions $\psi_1$ by $\psi_1(x,z)=1$ on $\R\times (0,1)$ and  $\psi_2(x,z)=z$ on $\R\times (0,1)$. Paste these two functions by a $\C^0$ partition of unity subordonated to the covering $[a+z,b-z] \times (0,1)$ and    $\R \setminus [a,b]$. Denote by $\phi$ the resulting function.  There is a Nash function $ \psi:\R \times (0,1)\to [0,2]$ such that $|\psi(x,z)-\phi(x,z)|<1$ for all $(x,z)\in \R\times (0,1)$. This function clearly has the required properties.
% \end{proof}

\begin{lem}\label{lem_phiep1}
There is a Lipschitz Nash function $\Phi:(0,1)\times  \R\to \R$ such that %  the following conditions hold %$\eta \in \St^+(\R)$ and
%for each $(a,b,z)\in \R^2\times (0,1)$, the one-variable function $\Phi_{a,b,z}:\R\to \R$ defined as $\Phi_{a,b,z}(x)=\Phi(x,a,b,z)$ satisfies  
\begin{enumerate}
\item  For all $\delta\in \R $ we have $\Phi(\delta,y)\in (0,2\delta)$, for all $y\in (-\infty,\delta]$.
\item On the set  $\{ (\delta,y)\in(0,1)\times \R  : y\in [\delta,+\infty)\}$, we have
\begin{equation}\label{eq_phi}
 |(\Phi- \Lambda)(\delta,y)| <\delta \et |d(\Phi- \Lambda)(\delta,y)| <\delta
\end{equation}
 where $\Lambda:(0,1)\times  \R\to \R$ is the function defined by $\Lambda(\delta,y)=y$.
%  and, 
%  \begin{equation}\label{eq_dphi}
%    |\frac{\pa \Phi}{\pa y}(y, c,z,t)-1| <t.
%  \end{equation} 
 \end{enumerate}  
\end{lem}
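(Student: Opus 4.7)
My plan is to imitate the strategy of the proof of Lemma \ref{lem_psiep1}: first build an explicit $\C^1$ semialgebraic prototype $u(\delta,y)$ on $(0,1)\times \R$ enjoying the required properties with some slack, and then apply the $\C^1$ semialgebraic approximation theorem (the $\C^1$ version of Efroymson's theorem, due to Shiota and recalled above) to obtain a Nash function $\Phi$ with $|u-\Phi|_1<c\delta$ for a sufficiently small constant $c>0$.

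To construct $u$, I would fix a $\C^1$ piecewise polynomial function $\phi:\R\to [c_1,c_2]$ with $0<c_1$ and $c_2<2$, satisfying $\phi(t)=1$ (and $\phi'(t)=0$) for $t\le 0$, and $\phi(t)=t$ (and $\phi'(t)=1$) for $t\ge 1$; then I set
$$u(\delta,y):=\delta\,\phi(y/\delta).$$
By construction, $u(\delta,y)=\delta$ on $\{y\le 0\}$, $u(\delta,y)=y$ on $\{y\ge \delta\}$, and $u(\delta,y)\in [c_1\delta,c_2\delta]$ on $\{0\le y\le \delta\}$, so $u(\delta,y)\in (0,2\delta)$ whenever $y\le \delta$. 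A routine computation of the partial derivatives $\partial u/\partial y=\phi'(y/\delta)$ and $\partial u/\partial \delta=\phi(y/\delta)-(y/\delta)\,\phi'(y/\delta)$ in the three regions $\{y\le 0\}$, $\{0\le y\le \delta\}$, $\{y\ge \delta\}$ shows that $|du|$ is bounded by a universal constant $C$; in particular $u$ is globally Lipschitz.

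Applying the $\C^1$ semialgebraic approximation theorem to $u$ with the positive continuous tolerance $\eta(\delta,y):=c\delta$, where $c>0$ is chosen small enough (for instance $c<\tfrac{1}{2}\min(c_1,\,2-c_2,\,1)$), yields a Nash function $\Phi:(0,1)\times \R\to \R$ with $|u-\Phi|_1<c\delta$. The inequality $|\Phi-u|<c\delta$ together with $u\in [c_1\delta,c_2\delta]$ on $\{y\le \delta\}$ then gives item (1), while on $\{y\ge \delta\}$ the identity $u=\Lambda$ immediately yields item (2). Finally, since $|d\Phi|\le |du|+c\le C+1$ globally, $\Phi$ is Lipschitz as required.

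The main (minor) subtlety is tuning the constants $c,c_1,c_2$ so that, after approximation, $\Phi$ stays strictly in $(0,2\delta)$ on $\{y\le \delta\}$; this is the analogue of the additive correction $\mu^2/4$ used in Lemma \ref{lem_psiep1} to prevent the approximating Nash function from crossing zero, and it reduces to straightforward bookkeeping.
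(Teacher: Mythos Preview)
Your proof is correct and follows essentially the same strategy as the paper's: build a Lipschitz $\C^1$ semialgebraic prototype and then apply Shiota's $\C^1$ approximation theorem; the only difference is that your prototype $u(\delta,y)=\delta\,\phi(y/\delta)$ already has a built-in margin $[c_1\delta,c_2\delta]\subset(0,2\delta)$, so you can use tolerance $c\delta$ and avoid the paper's additive $\delta^2/4$ correction. One notational slip: writing $\phi:\R\to[c_1,c_2]$ contradicts $\phi(t)=t$ for $t\ge 1$; what you need (and actually use) is only $\phi([0,1])\subset[c_1,c_2]$.
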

\begin{proof}
 %$\Phi(x)=\frac{x}{1+x^{2k}}$. For $x\ge 1$ $\Phi(x)\le \frac{x^{2k}}{1+x^2k}\le 1$. For $x\le 1$, $\Phi(x)\le x \le 1$. Now $$|\Phi(x)-x|=\frac{|x|^{2k+1}}{1+x^{2k}} \le \frac{1}{2^{2k+1}}\le \ep,  $$ for $k$ large enough. pas definissable par rap a k et dc par rapp a ep!
% $$\Phi(x)=\frac{Cx}{C^2 +x^{2}}$$(c+z+\frac{(y-c)^2}{z})
Let $\theta:\R\to [0,1]$ be a piecewise polynomial $\C^1$ function satisfying $\theta(y)=1$ if $y<0$, $\theta(y)=0$ if $y\ge 1$.  
Define first a $\C^1$ one-variable function on $(0,1)\times  \R$ by %$\phi(\delta,y)=y$ if $y \in [0,+\infty)$  and 
$$\phi(\delta,y)=\theta(\frac{y}{\delta})\cdot \frac{\delta}{1+y^2} +(1-\theta(\frac{y}{\delta}))y.$$  
The desired function will be provided by an approximation of $\phi$. We first check that $\phi$ is a Lipschitz function.  It suffices to show that $|d\phi|$ is bounded.  Rewriting $\phi$ as 
$$\phi(\delta,y)=\theta(\frac{y}{\delta})\cdot(\frac{\delta}{1+y^2}-y) +y,$$
we see that it is enough to prove that the function $\theta(\frac{y}{\delta})\cdot(\frac{\delta}{1+y^2}-y)$ has bounded derivative.  If $y<0$ or if   $y>\delta$ then $\theta'(\frac{y}{\delta})$ vanishes and it is easy to find a bound for the derivative (it is clear that $(\frac{\delta}{1+y^2}-y)$ is a Lipschitz function).  We thus will focus on the couples $(\delta,y)\in (0,1)\times \R$ satisfying $y \in [0,\delta]$. For such $(\delta,y)$, a straightforward computation yields that the norm of the derivative of the function $\theta(\frac{y}{\delta})$ is not greater than $\frac{A}{\delta}$, for some constant $A$, and since $|\frac{\delta}{1+y^2}-y|\le 2\delta$, we see that the desired bound is easy to find.

We now claim   that $\phi(\delta,y) \in [0,\delta]$,  for all $(\delta,y)$ such that $y\le \delta$. Indeed, if $y<0$ then $\theta(\frac{y}{\delta})=1$, so that  $\phi(\delta,y)=\frac{\delta}{1+y^2}$, which clearly belongs to the interval $[0,\delta]$.
 If $y\in [0,\delta]$ then, because both $y$ and $\frac{\delta}{1+y^2}$ belong to this interval, it is clear from the definition of $\phi$ that  $\phi(\delta,y)$ sits in this interval too, as claimed. 
% $$\phi(y,c,z)-c=\theta(\frac{y-c}{z})\cdot (y-c)^2+ (1-\theta(\frac{y-c}{z}))(y-c),$$
% which is positive. 

By Efroymson's Theorem, we know that there is a $\C^\om$ function $\phi':(0,1)\times \R\to \R $ such that $|\phi'-\phi|_1<\ep$, on $(0,1)\times \R$, where $\ep(\delta,y):=\frac{\delta^2}{4}$. Set then $$\Phi(\delta,y):=\phi'(\delta,y)+\frac{\delta^2}{4}.$$ 
Since $\phi$ coincides with $\Lambda$ on the set $\{ (\delta,y)\in (0,1)\times \R  : y\in [\delta,+\infty)\}$, we see that $(2)$ clearly holds.  Moreover, if  $y\in (-\infty,\delta]$ then, because $\phi(\delta,y)\in [0,\delta]$, we must have $\phi'(\delta,y)\in [-\frac{\delta^2}{4},\delta+\frac{\delta^2}{4}]$, which entails that $\Phi(\delta,y)\in (0,2\delta)$, yielding $(1)$.
% 
% $\ep(x,z)=z$, regarding $\phi(x)$ as a two-variable function  on $\R\times (0,1)$, constant  with respect to second variable $z$. This provides the desired function $\Phi$.
\end{proof}

\begin{lem}\label{lem_phiep}
There is a Lipschitz Nash function $\Psi:(0,1) \times \R \to (0,1)$ such that %  the following conditions hold %$\eta \in \St^+(\R)$ and
%for each $(a,b,z)\in \R^2\times (0,1)$, the one-variable function $\Phi_{a,b,z}:\R\to \R$ defined as $\Phi_{a,b,z}(x)=\Phi(x,a,b,z)$ satisfies  
%\begin{enumerate}
%\item  For all  $(z,y)\in (0,1)\times \R$  we have $\Psi(z,y)\in (0,1)$.
%\item 
on the set  $\{ (\delta,y)\in (0,1)\times \R: y\in (\delta,1-\delta) \}$, we have
\begin{equation}\label{eq_psi}
 |(\Psi- \Lambda)(\delta,y)| <\delta \et |d(\Psi- \Lambda)(\delta,y)| <\delta
\end{equation}
 where $\Lambda:(0,1)\times  \R\to \R$ is the function defined by $\Lambda(\delta,y)=y$.
%\end{enumerate}                                                                                                                                                                                                                              
\end{lem}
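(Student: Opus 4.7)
The strategy is a two-sided version of the proof of Lemma \ref{lem_phiep1}: construct a $\C^1$ piecewise polynomial Lipschitz auxiliary function that agrees with $\Lambda(\delta,y)=y$ on a band covering $(\delta,1-\delta)$ and has range in $(0,1)$, then approximate it in $\C^1$ norm by a Nash function (Shiota's strengthening of Efroymson's theorem, recalled after the definition of the $\C^1$ Efroymson topology). A small range-shrinking rescaling before approximation guarantees that the Nash approximation still takes values in $(0,1)$.

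Concretely, let $\theta:\R\to[0,1]$ be the $\C^1$ piecewise polynomial function from Lemma \ref{lem_phiep1} (with $\theta(y)=1$ for $y<0$ and $\theta(y)=0$ for $y\geq 1$) and set $\delta':=\delta/4$. Define
$$\phi(\delta,y):=\theta\!\left(\tfrac{y}{\delta'}\right)\frac{\delta'}{1+y^2}+\theta\!\left(\tfrac{1-y}{\delta'}\right)\!\!\left(1-\frac{\delta'}{1+(1-y)^2}\right)+\left[1-\theta\!\left(\tfrac{y}{\delta'}\right)-\theta\!\left(\tfrac{1-y}{\delta'}\right)\right]y.$$
Since $\delta'\leq 1/4$, the two cutoffs are supported on the disjoint sets $\{y\leq\delta'\}$ and $\{y\geq 1-\delta'\}$, so the bracketed weight is nonnegative everywhere. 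A direct case-by-case check on the five regions defined by these supports yields $\phi(\delta,y)\in(0,1)$ for all $(\delta,y)$, together with $\phi(\delta,y)=y$ throughout the band $[\delta',1-\delta']\supset(\delta,1-\delta)$. Writing $\phi-y$ as the sum of the two bump contributions and repeating the derivative estimates of Lemma \ref{lem_phiep1} for each of them (the factor $d\delta'/d\delta=1/4$ causes no trouble) shows that $\phi$ is Lipschitz on $(0,1)\times\R$.

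Next set $\tilde\phi(\delta,y):=(1-\delta^5)\phi(\delta,y)+\delta^5/2$; this is still Lipschitz, takes values in $(\delta^5/2,\,1-\delta^5/2)$, and on the middle band satisfies $|\tilde\phi-\Lambda|\leq\delta^5/2$ together with $|d(\tilde\phi-\Lambda)|=O(\delta^4)$ (direct computation, using that $\partial_\delta\phi=0$ and $\partial_y\phi=1$ there). The Shiota/Efroymson $\C^1$ approximation theorem then produces a Nash function $\Psi:(0,1)\times\R\to\R$ with $|\Psi-\tilde\phi|_1<\delta^5/4$. This $\Psi$ lies in $(\delta^5/4,\,1-\delta^5/4)\subset(0,1)$, is Lipschitz (its derivative differs from that of $\tilde\phi$ by less than $1$), and---because the set $(\delta,1-\delta)$ is nonempty only when $\delta<1/2$---satisfies $|\Psi-\Lambda|<\delta$ and $|d(\Psi-\Lambda)|<\delta$ on $(\delta,1-\delta)$ by the triangle inequality. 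The one delicate point is the range-shrinking rescaling; it must use a sufficiently high power of $\delta$ (any $\delta^N$ with $N\geq 4$ works) so that both the width of the interval one shrinks into and the resulting $\C^1$ distortion on the middle band remain safely below $\delta$.
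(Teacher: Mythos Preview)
Your argument is correct. It differs from the paper's in a natural way: you run a \emph{two-sided} version of the construction used in Lemma \ref{lem_phiep1} (one cutoff near $0$, one near $1$, then a range-shrinking affine rescaling and a $\C^1$ Nash approximation), whereas the paper obtains $\Psi$ by \emph{composing} two instances of the function $\Phi$ already built in Lemma \ref{lem_phiep1}, namely
\[
\Psi(\delta,y)=1-\Phi\!\Big(\tfrac{\delta^2}{2L^2},\,1-\big(\Phi(\tfrac{\delta}{L},y)+\tfrac{\delta^2}{L^2}\big)\Big),
\]
with $L$ a large constant. The paper's route is more modular: Lipschitzness of $\Psi$ is immediate since it is a composition of the Lipschitz $\Phi$ with Lipschitz maps, and the range and approximation estimates reduce directly to properties (1) and (2) of Lemma \ref{lem_phiep1}. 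Your route is more self-contained --- it does not need Lemma \ref{lem_phiep1} as a black box --- but requires redoing the bounded-derivative check for the two bump terms and the extra affine correction $\tilde\phi=(1-\delta^5)\phi+\delta^5/2$. Both arguments ultimately appeal to the semialgebraic $\C^1$ approximation theorem in the same way.
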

\begin{proof}
 Let $\Phi:(0,1)\times \R\to \R$ be the Lipschitz $\C^\om$ mapping provided by Lemma \ref{lem_phiep1} and set $\tilde{\Phi}(\delta,y):=\Phi(\delta,y)+\delta^2$. Fix a  positive constant $L$ and, for $(\delta,y)\in (0,1)\times \R$, set
 $$\Psi(\delta,y):= 1-\Phi(\frac{\delta^2}{2L^2},1-\tilde{\Phi}(\frac{\delta}{L},y)), $$
and let us check that if $L$ is chosen large enough then this mapping has the required properties.

Observe that $(1)$ and $(2)$  of Lemma \ref{lem_phiep1} imply that   \begin{equation}\label{eq_borne_phi_c}
                                                                    \Phi(\delta,y)< \max (y,0)+2\delta.
                                                                   \end{equation}
             Furthermore, since $\Phi(\delta,y)$ is positive for all $(\delta,y)$, we have:  \begin{equation}\label{eq_u}
                                    u:=\tilde{\Phi}(\frac{\delta}{L},y)= \Phi(\frac{\delta}{L},y)+\frac{\delta^2}{L^2} \ge \frac{\delta^2}{L^2}
                                   \end{equation}
We deduce that:
                                                                  $$ \Psi(\delta,y)=1-\Phi(\frac{\delta^2}{2L^2}, 1-u)\overset{(\ref{eq_borne_phi_c})}{>} \min(u,1)-\frac{\delta^2}{L^2}\overset{(\ref{eq_u})}{\ge} 0, $$
                                                                  for $L\ge 1$. Moreover, since $\Phi(\delta,y)$ is positive for all $(\delta,y)$, we clearly have
    $$ \Psi(\delta,y)=1-\Phi(\frac{\delta^2}{2L^2}, 1-u) <1,$$
   yielding that $\Psi(\delta,y)\in (0,1)$. A straightforward computation of derivative then yields that if $L$ is chosen sufficiently large then $(2)$ of Lemma \ref{lem_phiep1} implies (\ref{eq_psi}).
 \end{proof}

%In order to check $(1)$, take $(z,a,b,y)\in (0,1)\times \R^3 $.
%We distinguish three cases.
% Assume first that $y \in (a+z,b-z)$. In this case, because $y >a+\frac{z}{L}$, by $(2)$ of Lemma \ref{lem_phiep1}, we have $|\Phi(\frac{z}{L}, a,y)-y|<\frac{z}{L}$, 
%  which entails that if $L$ is bigger than $2$ then $$\Phi(\frac{z}{L}, a,y)> y+z-\frac{z}{L}>a +z-\frac{z}{L}>a+\frac{z}{L},$$
% and $$\Phi(\frac{z}{L}, a,y)<y-\frac{z}{L}< b-z+\frac{z}{L}<b-\frac{z}{L}.$$
% Applying a second time this argument yields $\Psi(z,a,b,y)\in (a,b)$.
% 
%  Assume now that $y\ge b-z$
% 
% we have by $(1)$ of Lemma \ref{lem_phiep1},  $\Phi(\frac{z}{L}, -b,-y)<b$

 We shall also need the following elementary fact.
 
\begin{lem}\label{lem delta ep}
 Let $\xi \in \St^0(U)$, where $U\in \D_n$ is a open set, and let $\ep\in \spl(U)$. There is $\delta \in \spl(U)$ such that for all $x$ and $x'$ in $U$, $|x-x'|< \delta(x)$ entails $|\xi(x)-\xi(x')|< \ep(x)$. 
\end{lem}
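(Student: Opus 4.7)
My approach is to obtain $\delta$ as a distance-to-closed-set function, so that continuity comes for free. The idea is to package the obstruction ``$|\xi(x)-\xi(x')|\geq \ep(x)$'' into a single definable subset $B$ of $U\times U\subset \R^{2n}$, and then let $\delta(x)$ be the Euclidean distance from the diagonal point $(x,x)$ to $B$.

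Concretely, I would set
$$B := \bigl\{(x,y)\in U\times U : |\xi(x)-\xi(y)|\geq \ep(x)\bigr\}$$
and
$$\delta(x) := d\bigl((x,x),\,B\bigr),$$
with distance taken in $\R^{2n}$. Definability of $\delta$ is immediate from that of $B$, and continuity follows from the fact that the distance to any subset of $\R^{2n}$ is $1$-Lipschitz in the base point, so $x\mapsto d((x,x),B)$ is continuous on $U$.

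The only step requiring genuine work, and the main obstacle, is positivity: I need $(x,x)\notin \overline{B}$ (closure in $\R^{2n}$) for every $x\in U$. Suppose for contradiction $(x_n,y_n)\in B$ with $(x_n,y_n)\to (x,x)$ and $x\in U$. Then $x_n\to x$ and $y_n\to x$, so the continuity of $\xi$ at $x$ gives $|\xi(x_n)-\xi(y_n)|\to 0$, while the defining inequality of $B$ together with the continuity of $\ep$ yields $|\xi(x_n)-\xi(y_n)|\geq \ep(x_n)\to \ep(x)>0$, a contradiction. Hence $\delta(x)>0$ for every $x\in U$, which combined with the previous paragraph gives $\delta\in \spl(U)$.

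To conclude, if $x,x'\in U$ satisfy $|x-x'|<\delta(x)$, then $|(x,x)-(x,x')|=|x-x'|<\delta(x)=d((x,x),B)$, so $(x,x')\notin B$, which by definition of $B$ is precisely the desired inequality $|\xi(x)-\xi(x')|<\ep(x)$. The positivity verification is the only delicate point, and it rests squarely on the positivity of $\ep$ together with the continuity of $\xi$.
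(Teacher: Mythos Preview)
Your proof is correct and takes a somewhat different route from the paper's. The paper defines
\[
\nu(x):=\sup\{\nu>0:\ \forall x'\in U,\ |x-x'|<\nu\Rightarrow |\xi(x)-\xi(x')|<\ep(x)\},
\]
observes that $\nu$ is definable and bounded below away from zero on compact subsets of $U$, and then invokes the (standard but unproved in the paper) fact that such a function admits a positive continuous definable minorant $\delta\in\spl(U)$. Your distance-to-bad-set construction sidesteps this last step entirely: by setting $\delta(x)=d((x,x),B)$ you obtain continuity and definability for free, and positivity follows from a direct sequential argument using only the continuity of $\xi$ and $\ep$ at the single point $x$. Both arguments are short, but yours is arguably more self-contained, since it never needs to pass from a merely definable lower bound to a continuous one.

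One small gap to patch: if $B$ happens to be empty, the distance $d((x,x),B)$ is $+\infty$ and your $\delta$ is not real-valued. Either treat this trivial case separately (any $\delta\in\spl(U)$ works then), or replace your $\delta$ by $\min(1,\delta)$ throughout.
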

\begin{proof}
 For  $x\in U$, let $\nu(x):=\sup \{\nu \in (0,\infty): \forall \, x' \in U, \;|x-x'|<\nu \implies |\xi(x)-\xi(x')|< \ep(x) \}$. This function is definable and bounded below away from zero on compact subsets of $U$. Hence, there must be $\delta \in \spl(U)$ such that $\delta(x)<\nu(x)$.
\end{proof}

\end{section}

\begin{section}{Approximations of Lipschitz functions}
In this section we establish that the definable Lipschitz $\C^\infty$ functions are dense in the set of definable Lipschitz functions. We start with the following technical lemma.
\begin{lem}\label{lem_app_open_cell}
 Let $C$ be a Lipschitz open cell of $\R^n$ and let $f\in \D^\infty(C) $. For each $\ep$ and $\eta$ in $\spn$, there is    $g\in \din$, such that $|f-g|_1<\ep$ on $\W_\eta(C)$. 
\end{lem}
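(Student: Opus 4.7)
The plan is to construct $g$ in the form $g=f\circ P$, where $P:\R^n\to C$ is a definable $\C^\infty$ retraction-like map that is $\C^1$-close to the identity on $\W_\eta(C)$. Since $P$ lands in $C$ and $f\in\D^\infty(C)$, the composite $g$ belongs automatically to $\D^\infty(\R^n)$; moreover, writing $d(f\circ P)=(df\circ P)\cdot dP$, the bound $|g-f|_1<\ep$ on $\W_\eta(C)$ reduces, via Lemma \ref{lem delta ep} applied to the continuous function $df$, to making $|P-\mathrm{id}|_1$ pointwise small enough on $\W_\eta(C)$, with a smallness weighted by the pointwise norm $|df|$ already present on $C$.

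I would build $P$ by induction on $n$, following the cell structure of $C=(\xi,\xi')$ over a Lipschitz open cell $D\subset\R^{n-1}$, with $\xi,\xi'$ Lipschitz and smooth on $D$ (or equal to $\pm\infty$) as in Definition \ref{dfn_lipschitz_cells}. The induction furnishes a definable $\C^\infty$ map $P_D:\R^{n-1}\to D$ that is $\C^1$-close to the identity on $\W_{\eta'}(D)$ for a suitable $\eta'\in\spl(\R^{n-1})$ derived from $\eta$ via (\ref{eq_dist_au_graph}) and Lemma \ref{lem_v_w} (in particular so that $\{\xt:(\xt,x_n)\in\W_\eta(C)\}\subset\W_{\eta'}(D)$). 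One then sets $P(\xt,x_n):=(P_D(\xt),x_n^\ast)$, where $x_n^\ast$ pushes $x_n$ into the slice $(\xi(P_D(\xt)),\xi'(P_D(\xt)))$ using the Nash building blocks of Section 2: if both $\xi,\xi'$ are finite, rescale via $t=(x_n-\xi(P_D(\xt)))/(\xi'(P_D(\xt))-\xi(P_D(\xt)))$ and take $x_n^\ast=\xi(P_D(\xt))+(\xi'(P_D(\xt))-\xi(P_D(\xt)))\Psi(\delta,t)$ from Lemma \ref{lem_phiep}; if only $\xi$ is finite, take $x_n^\ast=\xi(P_D(\xt))+\Phi(\delta,x_n-\xi(P_D(\xt)))$ from Lemma \ref{lem_phiep1} (symmetrically if only $\xi'$ is finite); if both are infinite, set $x_n^\ast=x_n$. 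The boundary data $\xi,\xi'$ appear only in composition with $P_D$, which lands in their common domain $D$, so $P$ is truly $\C^\infty$ on all of $\R^n$, and its image lies in $C$ since $\Psi(\delta,\cdot)\in(0,1)$ and $\Phi(\delta,\cdot)>0$.

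The $\C^1$-closeness on $\W_\eta(C)$ is then verified as follows. By (\ref{eq_dist_au_graph}) and the Lipschitz property of $\xi,\xi'$, for $(\xt,x_n)\in\W_\eta(C)$ the normalized vertical coordinate $t$ (respectively the gap $x_n-\xi(\xt)$) stays bounded away from the endpoints of $(0,1)$ (respectively from $0$) by a definable positive function of $\xt$; choosing $\delta\in\spn$ smaller than this bound, via Lemma \ref{lem delta ep}, puts the argument of $\Psi$ (respectively $\Phi$) in the regime where (\ref{eq_psi}) (respectively (\ref{eq_phi})) applies, so that $|x_n^\ast-x_n|$ and $|dx_n^\ast-dx_n|$ are as small as wanted. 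Combined with the inductive $\C^1$-estimate on $P_D$, this yields the required $|P-\mathrm{id}|_1$ bound on $\W_\eta(C)$. The main difficulty is the coherent bookkeeping across the induction---the Nash parameter $\delta$, the inductive modulus $\eta'$, and a local bound for $|df|_1$ on the target slice must be dialed down in the right order so that the composite derivative estimate $(df\circ P-df)\cdot dP+df\cdot(dP-I)$ absorbs into $\ep$---which is made possible by the uniform bounds (\ref{eq_der_psi}), (\ref{eq_phi}), (\ref{eq_psi}) that control $|d\Psi|$ and $|d\Phi|$ with explicit, mild dependence on $\delta$ and ensure that the composition does not amplify errors uncontrollably.
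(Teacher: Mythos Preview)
Your approach is correct and is essentially the paper's own argument, repackaged. The paper also produces $g$ as $f$ precomposed with a $\C^\infty$ map into $C$ built one coordinate at a time from the Nash functions $\Phi,\Psi$ of Lemmas~\ref{lem_phiep1}--\ref{lem_phiep}; the only difference is organizational---the paper runs a downward iteration $i=n,n-1,\dots,0$, extending $g_{i+1}:D_{i+1}\times\R^{n-i-1}\to\R$ to $g_i:D_i\times\R^{n-i}\to\R$ at each step (using a normalizing diffeomorphism of $D_i\times\R^{n-i}$ and the explicit parameters $\delta_j$), whereas you assemble the same coordinate pushes into a single retraction $P:\R^n\to C$ built by induction on $n$. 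One small point: the parameter $\delta$ you feed into $\Psi,\Phi$ must be $\C^\infty$ (not merely in $\spn$) for the composite to be smooth---this is why the paper uses the Nash functions $\delta_j$ of (\ref{eq_delta_j}).
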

\begin{proof}
 Fix two functions $\ep$ and $\eta$ in $\spn$.  Let $D_i$ be the image of $C$ under the orthogonal projection  onto $\R^i$ (so that $D_n=C$).

We are going to construct for each $i\le n$ (by downward induction on $i$) a $\C^\om$ function $g_i:D_i \times \R^{n-i}\to \R$ which satisfies $|g_{i+1}-g_i|<\frac{\ep}{n}$  on $\W_\eta(D_{i+1} \times \R^{n-i-1})$ for all $0 \le i<n$ (starting with $g_n:=f$). The function $g_0$ will  then be the desired approximation.

%  The approximation that we are going to construct will actually satisfy $|g_i-g_{i-1}|\le c \ep $ on $\W_{c\eta}(D_{i-1})$, for some constant $c\in \R$. Since the constant $c$ will be independent on $\eta$ and $\ep$, this will be enough to carry out this third step (this constant $c$ will just depend on $n$ and on the Lipschitz constants of the functions describing the cell $C$).  % By Lemma \ref{lem delta ep} there is a function $\delta$ such that 
Let us fix $i<n$ and assume that $g_{i+1}$ has been constructed. The cell $D_{i+1}$ can be written $(\xi_{i},\xi'_{i})$ where $\xi_{i}$ is either $-\infty$ or a $\C^\om$ Lipschitz function on $D_{i}$ and $\xi_{i}'$ is either $+\infty$ or a $\C^\om$ Lipschitz function on $D_{i}$ satisfying $\xi_i< \xi_i'$. %Since $g_{i+1}$ is $\C^\om$ on $F_{i+1}$, it extends to a $\C^\om$ function on a neighborhood $U$ of $F_{i+1}$. 

 We start with the case where $\xi_i'$ and $\xi_i$ are both finite. We first check that we can assume, without loss of generality, that $D_{i+1}=D_i\times (0,1)$, i.e., that $\xi_i\equiv 0$ and $\xi'_i\equiv 1$. In fact, if $\theta:D_i \times \R^{n-i}\to D_i \times \R^{n-i}$ denotes the $\C^\om$ diffeomorphism defined by $\theta(\xt,y,t):=(\xt,\frac{y-\xi_i(\xt)}{\xi_i'(\xt)-\xi_i(\xt)},t)$, for  $(\xt,y,t) \in D_i \times \R\times \R^{n-i-1}$, it  suffices to prove the result for $ \theta_*g_{i+1}$, assuming $\xi_i\equiv 0$ and $\xi'_i\equiv 1$.% Let us assume this fact without changing notations.
 %This mapping sends the graph of $\xi_i$ onto $D_i\times \{0_{\R^{n-i}\}$ $D_i\times \{0_{\R^{n-i}\}$

%Let $\delta \in \spn$ be a $\C^\om$ Lipschitz function (for instance $\delta=\dej$, for some $j$, see (\ref{eq_delta_j})).
%Take $\delta$ sufficiently small to have for all  $(\xt,y)\in E_{i-1}\times \R^{n-i+1}$. 
 %\begin{equation}\label{eq_xi_ge_delta}(\xi_i'(\xt)-\xi_i(\xt))>\delta(\xt,y).\end{equation}then  $g_i$ is $\C^\om$ on $F_i$ and satisfies 
 Fix an integer $j\ge 1$, and set for $x=(\xt,y,t)\in D_{i}\times \R\times \R^{n-i-1}$:
 %$$H(x):=,$$  as well as
 $$g_{i}(x)=g_{i+1}(\xt,\Psi(\dej(x),y),t),$$where $\Psi$ is provided by Lemma \ref{lem_phiep}.
 For every $x=(\xt,y,t)\in D_{i}\times \R\times \R^{n-i-1}$,   we have
 $\Psi(\dej(x),y)\in (0,1)$, which means that  $(\xt,\Psi(\dej(x),y),t)\in D_{i+1} \times \R^{n-i-1} $. Hence,  $g_{i}$ is a (well-defined) smooth function on $D_{i}\times \R^{n-i}$. %Moreover, by (\ref{eq_psi}), for $\delta$ small enough we have $(\xt,H^*\Psi(x),t)\in U$ for every $x=(\xt,y,t)\in F_{i+1}$. 
 
    We claim that if $j$ is chosen large enough $|g_{i+1}-g_i|_1<\frac{\ep}{n}$ on $\W_\eta(D_{i+1}\times \R^{n-i-1})$. For simplicity, let for $x=(\xt,y,t)\in D_{i}\times \R\times \R^{n-i-1}$ 
 and set $$F(\xt,y,t):=(\xt,\Psi(\dej(x),y),t),$$ so that  we have $g_i=F^*g_{i+1}$ on $D_{i}\times \R^{n-i}$. We deduce that it suffices to show that we can make $|F-Id_{\R^n}|_1$  smaller than any given positive continuous function on $\W_\eta(D_{i+1}\times \R^{n-i-1})$ by choosing $j$ large enough. But, in view of (\ref{eq_psi}), this fact is clear. This proves the result in the case where $\xi_i$ and $\xi'_i$ are finite.
 
 %there is $\eta\in \spn$ such that if $|F-Id_{\R^n}|_1<\eta$ on $E_i\times \R^{n-i}$ then \begin{equation}\label{eq_g_i_g_i_1}|g_i-g_{i-1}|_1=|F^*g_{i-1}-g_{i-1}|_1<\ep.\end{equation}
  %But, if  $\Lambda:\to \R$ denotes the mapping defined by $\Lambda(z,a,b,y)=y$ (as in (\ref{eq_psi})) we have 
  %$$|F-Id_{\R^n}|_1=|H^*\Psi-\Lambda|_1=|H^*(\Psi-\Lambda)|_1\le |DH|_1\cdot |\Psi-\Lambda|_1\circ H,$$
  %which, by (\ref{eq_psi}), is smaller than $\eta$ if $\delta$ is chosen small enough, which by (\ref{eq_g_i_g_i_1}), yields the desired estimate.

  We now address the case where one of the two functions, say $\xi_{i}'$, is infinite (if both are infinite then $D_{i+1}=D_i\times \R$ and there is nothing to prove). In this case, composing with the diffeomorphism $(\xt,y,t)\mapsto (\xt,y+\xi_i(\xt),t)$ if necessary, we see that we can assume that $\xi_i\equiv 0$.  We then define $g_{i}$ in the same way, just replacing the mapping $\Psi$ provided by Lemma \ref{lem_phiep} with the mapping $\Phi$ provided by Lemma \ref{lem_phiep1}, i.e.,  we set for $x=(\xt,y,t)\in D_{i}\times \R\times \R^{n-i-1}$:
 $$g_{i}(x):=g_{i+1}(\xt,\Phi(\dej(x),y),t).$$  The same argument (simply replacing  Lemma \ref{lem_phiep} with Lemma \ref{lem_phiep1}) then yields that if $j$ is chosen large enough then $g_i$ is a sufficiently close approximation of $g_{i+1}$ on $\W_\eta(D_{i+1}\times \R^{n-i-1})$.
%   $E_i$ is of type $[\xi_i,\infty)$ (if  $E_{i-1}\times (-\infty,+\infty)$). 
%  If  $\xi_1\equiv \infty$ (resp. $\xi_2\equiv \infty$) then we set $\xi'_1\equiv \infty$   (resp. $\xi_2'\equiv \infty$).
\end{proof}

 \begin{thm}\label{thm_approx_lips}
 Let $f:A \to \R$ be a Lipschitz definable function, $A\subset \R^n$. For every $\ep\in \spn$ there exists a  Lipschitz function $g\in \din$ such that $|g-f|<\ep$ on $A$. Moreover, the Lipschitz constant of $g$ can be bounded independently of the chosen function $\ep$.
\end{thm}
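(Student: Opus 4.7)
The plan is to reduce the global problem to Lemma \ref{lem_app_open_cell} via the Lipschitz stratification of Proposition \ref{pro_stratification}, and then to glue the resulting local approximations using a definable $\C^\infty$ partition of unity built from the Nash transition function $\psi$ of Lemma \ref{lem_psiep1}.

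First I would extend $f$ to an $L_f$-Lipschitz definable function $\bar{f}:\R^n\to\R$ via the infimum formula recalled in Section~1; any approximation of $\bar{f}$ on $\R^n$ restricts to an approximation of $f$ on $A$ with the same sup-norm and Lipschitz constants, so there is no loss in assuming $A=\R^n$. Applying Proposition \ref{pro_stratification} to $\bar{f}$ yields a finite stratification $\R^n=S_1\sqcup\cdots\sqcup S_m$ into Lipschitz cells (after local orthonormal changes of coordinates) on which $\bar{f}$ is $\C^\infty$, with $|d\bar{f}|\le L_f$ on each top-dimensional stratum. Denote the top-dimensional strata by $C_1,\dots,C_N$ and put $\Sigma=\R^n\setminus\bigcup_j C_j$, the union of the strata of dimension $<n$.

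For each open cell $C_j$, Lemma \ref{lem_app_open_cell} furnishes, for prescribed $\ep',\eta\in\spn$, a function $g_j\in\din$ with $|g_j-\bar{f}|_1<\ep'$ on $\W_\eta(C_j)$. Choosing $\ep'$ a small positive constant secures in particular $|dg_j|\le L_f+\ep'$ on $\W_\eta(C_j)$. I would next build smooth weights $\chi_j\in\din$ adapted to the cells, by feeding the Lipschitz defining functions of $C_j$ (and auxiliary Lipschitz approximations of the distances to the strata of $\partial C_j$) into the Nash function $\psi$ of Lemma \ref{lem_psiep1}. Each $\chi_j$ is essentially $1$ on $\W_{2\eta}(C_j)$, vanishes outside a slightly larger neighborhood, and has gradient of order $1/\eta$. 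After normalization one arranges $\sum_j\chi_j\equiv 1$ on $\R^n$.

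The candidate approximation is $g:=\sum_j\chi_j g_j$. The bound $|g-\bar{f}|<\ep$ follows from $|g_j-\bar{f}|<\ep'$ on the support of $\chi_j$ together with the estimate $|g_j-g_k|\le 2\ep'+O(L_f\cdot\eta)$ in the transition region near $\Sigma$ (both $g_j$ and $g_k$ being close to the $L_f$-Lipschitz $\bar{f}$ at points separated by $O(\eta)$). For the Lipschitz bound one writes $dg=\sum_j\chi_j\,dg_j+\sum_j(g_j-g_k)\,d\chi_j$, where $\sum_j d\chi_j=0$ has been used to subtract any fixed $g_k$; this displays $|dg|$ as a sum of terms bounded by $L_f+\ep'$ and terms of size $O(\eta)\cdot O(1/\eta)=O(1)$, yielding a Lipschitz constant for $g$ depending only on $L_f$ and on the combinatorics of the stratification, independently of $\ep$.

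The main obstacle is the construction of the partition $\{\chi_j\}$: in a polynomially bounded o-minimal structure classical smooth partitions of unity are unavailable, so the $\chi_j$ must be produced directly from $\psi$ so that they are definable, $\C^\infty$, sum to a strictly positive (hence normalizable) function, and have supports fitting together so that every overlap lies in the intersection of two sets of the form $\W_\eta(C_j)$ on which the cancellation trick is available. This will require an induction on the codimension of the strata of $\Sigma$, treating higher-codimensional pieces first with smaller parameters, together with a careful calibration of $\ep'$ against $\eta$ so that the $\C^0$ error stays below $\ep$ while the Lipschitz bound remains $\ep$-independent.
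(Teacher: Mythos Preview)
Your overall strategy---stratify, approximate on the open cells via Lemma~\ref{lem_app_open_cell}, glue with smooth bump functions built from $\psi$---is exactly the paper's strategy. But there is a genuine gap at the gluing step, and the paper's cure is a structural ingredient you have not put in place.

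The regions $\W_\eta(C_j)$ on which Lemma~\ref{lem_app_open_cell} controls $g_j$ are pairwise disjoint (each is contained in its own open cell $C_j$) and their union misses an $\eta$-neighborhood of $\Sigma$. So the sentence ``every overlap lies in the intersection of two sets of the form $\W_\eta(C_j)$'' cannot be right: such intersections are empty. If instead you let $\mathrm{supp}\,\chi_j$ spill past $C_j$ so that the supports do cover $\R^n$, then in the transition zone you have no information whatsoever about $g_j$: Lemma~\ref{lem_app_open_cell} gives a function in $\din$ with the stated estimate only on $\W_\eta(C_j)$ and says nothing outside, so both $|g_j|$ and $|dg_j|$ may blow up there. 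Your estimate $|g_j-g_k|\le 2\ep'+O(L_f\eta)$ in the transition region, and hence the whole cancellation argument $\sum_j(g_j-g_k)\,d\chi_j=O(1)$, has no justification.

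The paper repairs this by running an induction on the ambient dimension $n$, proving Theorem~\ref{thm_approx_lips} simultaneously with Propositions~\ref{pro_bump_graph} and~\ref{pro_bump_open_cell}. Two things come out of the inductive hypothesis in $\R^{n-1}$. First (Step~\ref{ste_cell_pos_codim_0}), near each positive-codimension maximal stratum $S=\Gamma_{\xi_S}$ one builds an \emph{extra} approximation $g_{S,j}$, obtained by approximating the $(n-1)$-variable function $\tilde x\mapsto f(\tilde x,\xi_S(\tilde x))$ and pulling it back; this $g_{S,j}$ is controlled on a full tube $\V_{\delta_j}(\Gamma_{\bar\xi_S})$. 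Second (Steps~\ref{ste_sep_graph}--\ref{ste_sep_open_cell}), the bump functions $\lambda_{S,j}$ themselves are built using $(n-1)$-dimensional Lipschitz approximations of $\xi_S$ and of auxiliary widths, and come with the refined decay $|\lambda_{S,j}|_1<\mu_j$ off the good set, where $\mu_j\lesssim \delta_j/(1+|g_{S,j}|_1)$; this is precisely what kills $\lambda_{S,j}\cdot g_{S,j}$ in the region where $g_{S,j}$ is uncontrolled. Neither piece is available from Lemma~\ref{lem_psiep1} alone, and your proposed ``induction on the codimension of the strata of $\Sigma$'' does not supply them: even at codimension one you still need an $(n-1)$-variable approximation theorem to produce a good $g_S$ near $S$ and to smooth the Lipschitz data entering $\psi$.

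In short, the missing idea is the simultaneous induction on $n$ that (i) manufactures approximations along the lower-dimensional strata so that the good regions \emph{do} cover $\R^n$, and (ii) yields bump functions whose smallness off the good set is calibrated to the (a priori unknown) size of the local approximations.
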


To prove this theorem, we shall need the following two propositions which will also be used in the proof of Proposition \ref{pro_rn}.

\begin{pro}\label{pro_bump_graph}
 Given $L\in \R$, there are positive constants $N$ and $b$ such that for every $L$-Lipschitz functions  $\delta\in \D(\R^{n}, (0,1))$ and $\xi\in \D(\R^{n-1})$  and every $\mu \in \spn$ there is   $\lambda\in \D^\infty(\R^n, [0,3))$ which satisfies:
 \begin{enumerate}
 \item\label{item_v_1}    $\lambda> 1$ on  $\V_{b\delta}(\Gamma_\xi)$ and $|\lambda|_1<\mu$ on $\R^n\setminus \V_{\delta}(\Gamma_\xi)$.
   \item\label{item_v_2}  $|d\lambda|<N$ on   $\V_{b\delta}(\Gamma_\xi)$ and  $|d\lambda|< \frac{N }{ \delta}$ on $\R^n$.
 \end{enumerate}
   \end{pro}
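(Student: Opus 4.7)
The geometric input is Lemma~\ref{lem_v_w}: applied to the pair $(\delta,\xi)$ it yields constants $0<c<\kappa<\tfrac{1}{2}$ depending only on $L$ such that
\[
\V_{c\delta}(\Gamma_\xi)\subset[\xi-\kappa\sigma,\xi+\kappa\sigma]\subset\V_\delta(\Gamma_\xi),
\]
where $\sigma(\tilde x):=\delta(\tilde x,\xi(\tilde x))$ is Lipschitz on $\R^{n-1}$ with constant controlled by $L$. A second application of Lemma~\ref{lem_v_w} to the $L$-Lipschitz function $b\delta$ for small $b$ then yields a concentric pair of strips whose widths sit in a fixed ratio depending only on $L$: for $b:=c/4$, on the inner region $\V_{b\delta}(\Gamma_\xi)$ one has $|x_n-\xi(\tilde x)|\le\gamma_1\sigma(\tilde x)$ with $\gamma_1<\kappa/2$, while outside $\V_\delta(\Gamma_\xi)$ one has $|x_n-\xi(\tilde x)|>\kappa\sigma(\tilde x)$. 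This separation is what will drive the construction.

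Using the universal Nash bump $\psi$ of Lemma~\ref{lem_psiep1}, I would set
\[
\lambda(x):=\psi\bigl(\mu_0(x),\;c_1\,\sigma_\star(\tilde x)^2,\;(x_n-\xi_\star(\tilde x))^2-\gamma^2\,\sigma_\star(\tilde x)^2\bigr),
\]
where $\gamma\in(\gamma_1,\kappa)$ and $c_1>0$ depend only on $L$, the functions $\xi_\star,\sigma_\star\in\D^\infty(\R^{n-1})$ are $\C^\infty$ definable Lipschitz smoothings of $\xi,\sigma$ with Lipschitz constants bounded in terms of $L$ and with error $<\eta\sigma$ for a prescribed $\eta>0$, and $\mu_0\in\D^\infty(\R^n)$ is a small positive Nash function (for instance $\delta_j$ from~(\ref{eq_delta_j}) with $j$ large) chosen so that $\mu_0\cdot M<\mu$, where $M$ dominates the derivative of the triple of arguments fed to $\psi$. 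Item~(\ref{item_psi_ge_1}) of Lemma~\ref{lem_psiep1} then forces $\lambda>1$ on $\V_{b\delta}(\Gamma_\xi)$ (the third argument of $\psi$ is $\le 0$ there), item~(\ref{item_mu}) forces $|\lambda|_1<\mu$ on $\R^n\setminus\V_\delta(\Gamma_\xi)$ (the third argument exceeds the second there), and~(\ref{eq_der_psi}) combined with the chain rule gives the two gradient bounds with a constant $N$ depending only on $L$ and on the constant $A$ of Lemma~\ref{lem_psiep1}.

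The main obstacle is producing the $\C^\infty$ Lipschitz substitutes $\xi_\star,\sigma_\star$ without circularity, since $\xi$ and $\sigma$ are only Lipschitz and Theorem~\ref{thm_approx_lips} is exactly what the present proposition is meant to help prove. My proposal is an induction on $n$: the inductive hypothesis supplies Theorem~\ref{thm_approx_lips} in dimension $n-1$, and both $\xi_\star,\sigma_\star$ live on $\R^{n-1}$, so they can be constructed from that hypothesis. The base case $n=1$ reduces to bumping around a single point and is handled directly by $\psi$. The delicate bookkeeping is then to verify that the strict separations built into Lemma~\ref{lem_v_w} (reflected in the inequalities $\gamma_1<\gamma<\kappa$) give enough slack to absorb the approximation error $\eta\sigma$ between $\xi_\star$ and $\xi$ (and between $\sigma_\star$ and $\sigma$) while keeping both the inner and outer inequalities strict; once $\eta$ is chosen small enough as a function of $L$ alone, the constants $b=c/4$ and $N$ depend only on $L$, as required.
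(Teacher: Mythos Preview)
Your proposal is correct and sits inside exactly the same inductive scheme as the paper: both arguments assume Theorem~\ref{thm_approx_lips} in dimension $n-1$ in order to manufacture the $\C^\infty$ Lipschitz substitutes for $\xi$ and for $\sigma(\tilde x)=\delta(\tilde x,\xi(\tilde x))$, both invoke Lemma~\ref{lem_v_w} to pass between tubular neighborhoods $\V_{\cdot}(\Gamma_\xi)$ and vertical strips $[\xi-\cdot,\xi+\cdot]$, and both feed the resulting data into the Nash bump $\psi$ of Lemma~\ref{lem_psiep1}. The only genuine difference is tactical. The paper first isolates a \emph{one-sided} construction (Step~\ref{ste_sep_graph}): with third argument $x_n-\zeta(\tilde x)$ it produces a $\lambda$ that is large on $(-\infty,\xi]$ and small on $[\xi+\alpha,+\infty)$, and then (Step~\ref{ste_sep_graph_cell}) obtains the two-sided strip bump as the product $\lambda_1\cdot\lambda_2$ of two such one-sided bumps. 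You instead go directly to a two-sided bump by making the third argument of $\psi$ the quadratic $(x_n-\xi_\star)^2-\gamma^2\sigma_\star^2$, so that its sign already detects both edges of the strip. Your route is marginally shorter for this proposition taken in isolation; the paper's modular route has the payoff that the one-sided Step~\ref{ste_sep_graph} is reused verbatim in Step~\ref{ste_sep_graph_int} to handle the open cells of Proposition~\ref{pro_bump_open_cell}, where the two walls $\xi_0,\xi_1$ are independent and a single quadratic would not suffice.
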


\begin{pro}\label{pro_bump_open_cell}
 Let $C$ be a Lipschitz cell of $\R^n$ of dimension $ n$. Given $L\in \R$, there are positive constants $N$ and $c$ such that for every $L$-Lipschitz function   $\delta\in \D(\R^{n}, (0,1))$ and each $\mu\in \spn$, we can find  $\lambda\in \D^\infty(\R^n, [0,3))$ which satisfies:
 \begin{enumerate}
 \item\label{item_w_1}  $\lambda> 1$ on  $\W_{\delta}(C)$ and $|\lambda|_1<\mu$ on $\R^n\setminus \W_{c\delta}(C)$.
  \item\label{item_w_2}  $|d\lambda|<N$ on   $\W_{\delta}(C)$ and $|d\lambda|< \frac{N }{ \delta}$ on $\R^n$.
 \end{enumerate}
\end{pro}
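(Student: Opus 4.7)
I plan to prove Proposition \ref{pro_bump_open_cell} by induction on $n$. For the base case $n=1$, a Lipschitz open cell $C\subset\R$ is an open interval $(a,b)$; when both endpoints are finite I would set
\[
\lambda(y) := \psi\bigl(\mu',\,(1-c)\delta(y),\,a+\delta(y)-y\bigr)\cdot \psi\bigl(\mu',\,(1-c)\delta(y),\,y-b+\delta(y)\bigr),
\]
using the Nash function $\psi$ of Lemma \ref{lem_psiep1} and choosing $c\in(0,1)$ and $\mu'$ in terms of $\mu$. Each factor is a one-sided cutoff, and by the properties of $\psi$ the product is $>1$ on $\W_\delta(C)$, while via the product rule and the derivative bounds in Lemma \ref{lem_psiep1} one gets $|\lambda|_1<\mu$ on $\R\setminus\W_{c\delta}(C)$ together with $|d\lambda|<N$ on $\W_\delta(C)$. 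When one of $a,b$ is infinite, the corresponding factor is omitted.

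For the inductive step I would write $C=\{(\xt,y)\in D\times\R : \xi(\xt)<y<\xi'(\xt)\}$ with $D\subset\R^{n-1}$ a Lipschitz open cell and $\xi,\xi'$ Lipschitz $\C^\infty$ on $D$ (or $\pm\infty$). Applying the inductive hypothesis to $D$ with an auxiliary Lipschitz function $\delta_D\in\D(\R^{n-1},(0,1))$ chosen as a suitable lower bound for $\delta$ along the relevant vertical slices (for instance $\delta_D(\xt):=\kappa\cdot\delta(\xt,(\bar\xi(\xt)+\bar\xi'(\xt))/2)$ with $\bar\xi,\bar\xi'$ the Lipschitz extensions of $\xi,\xi'$ to $\R^{n-1}$ and $\kappa$ a small positive constant) and with an auxiliary $\mu_D$ derived from $\mu$ yields a bump $\lambda_D$ on $\R^{n-1}$. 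I would then define
\[
\lambda(\xt,y) := \lambda_D(\xt)\cdot \chi_\xi(\xt,y)\cdot \chi_{\xi'}(\xt,y),
\]
where $\chi_\xi,\chi_{\xi'}\in\din$ are smooth one-sided vertical cutoffs: $\chi_\xi$ is $>1$ for $y$ well above $\Gamma_\xi$ and has $|\chi_\xi|_1<\mu'$ for $y$ at or below $\Gamma_\xi$ (symmetrically for $\chi_{\xi'}$), with the corresponding factor dropped when $\xi\equiv-\infty$ or $\xi'\equiv+\infty$. To verify (1), on $\W_\delta(C)$ Lemma \ref{lem_v_w} gives $|y-\xi(\xt)|$ and $|\xi'(\xt)-y|$ bounded below by a fixed multiple of $\delta(\xt,y)$, so both $\chi$-factors exceed $1$; moreover the projection satisfies $\xt\in\W_{\delta_D}(D)$ up to a constant, so $\lambda_D>1$. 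To verify (2), on $\R^n\setminus\W_{c\delta}(C)$ at least one of the three factors is small with small derivative, and the product rule together with the uniform bound $3$ on each factor gives $|\lambda|_1<\mu$.

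The main technical obstacle is producing the cutoffs $\chi_\xi,\chi_{\xi'}$ as genuinely $\C^\infty$ functions on all of $\R^n$: the naive formula $\psi(\mu',\delta',\bar\xi(\xt)+\delta'-y)$ is only Lipschitz, because $\bar\xi$ is only Lipschitz. I would resolve this by invoking Proposition \ref{pro_bump_graph} applied to $\bar\xi$ (and similarly to $\bar\xi'$) as a black box: its output is a $\C^\infty$ bump concentrated in a thin tube about $\Gamma_{\bar\xi}$, which provides a smooth "indicator" of the transition region within which a Lipschitz one-sided raw cutoff can be mollified into a $\C^\infty$ one. The remainder of the argument is careful bookkeeping of constants; the uniformity of $N,c$ in $\delta,\mu$ follows because all Lipschitz constants inherent to the fixed cell $C$ are themselves fixed, and the scaling parameters entering $\lambda_D,\chi_\xi,\chi_{\xi'}$ are chosen linearly in $\delta$ and $\mu$.
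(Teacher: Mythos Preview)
Your overall plan—build $\lambda$ as a product of a base-direction bump and vertical one-sided cutoffs—is reasonable in spirit, but it differs structurally from the paper's argument and contains a real gap precisely at the step you yourself flag as ``the main technical obstacle.''

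The paper does not induct on $n$ in your way. Instead (Step~\ref{ste_sep_open_cell}), it extends each pair of boundary functions $\xi_i<\xi'_i$ from $D_{i-1}$ to Lipschitz functions on all of $\R^{n-1}$, regards them as independent of the remaining coordinates, and forms slabs $C_i\subset\R^n$ with $C=\bigcap_{i=1}^n C_i$. Each $C_i$ has base $\R^{n-1}$, so one reduces to the special case treated in Step~\ref{ste_sep_graph_int}; there is no separate $\lambda_D$ factor and no comparison between $\delta$ and a projected $\delta_D$. Your choice $\delta_D(\xt)=\kappa\,\delta(\xt,(\bar\xi+\bar\xi')/2)$ is in fact problematic: it is undefined when $\xi'\equiv+\infty$, and even in the bounded case you have not shown that $\delta_D(\xt)$ is comparable to $\delta(\xt,y)$ uniformly over the full vertical extent of $C$, which you need both for $\xt\in\W_{\delta_D}(D)$ whenever $(\xt,y)\in\W_\delta(C)$ and for the converse inclusion controlling $|\lambda|_1$ on the complement.

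The decisive gap, however, is your construction of $\chi_\xi$. Proposition~\ref{pro_bump_graph} gives a smooth function that is large in a tube around $\Gamma_{\bar\xi}$ and small away from it: a two-sided bump, not a one-sided cutoff. Saying it ``provides a smooth indicator of the transition region within which a Lipschitz one-sided raw cutoff can be mollified'' does not describe a definable $\C^\infty$ construction; you cannot convolve, and multiplying your raw cutoff by the bump destroys the requirement $\chi_\xi>1$ far above the graph (where the bump is tiny). The paper's actual device (Step~\ref{ste_sep_graph}) is different and cleaner: using the simultaneous induction hypothesis, namely Theorem~\ref{thm_approx_lips} in $n-1$ variables, one approximates the Lipschitz function $\xi$ by a $\C^\infty$ definable $\zeta$ with $|\xi-\zeta|<\alpha/2$ and $\xi\le\zeta$, after which the one-sided cutoff $\lambda(x)=\psi(\mu'(x),\alpha'(\xt)/2,x_n-\zeta(\xt))$ is already $\C^\infty$. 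That smoothing of the graph itself, via the Lipschitz approximation theorem one dimension down, is the missing ingredient in your sketch.
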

We wish to make two remarks about the statements of these two propositions which will be useful in the proofs.
\begin{rem}\label{rem_produit_et_borne}
  We assume that $\lambda (x)\in [0,3)$ for convenience. What actually matters is that $\lambda$ is bounded independently of $\delta$ and $\mu$. This is the reason why we will not really care when the constructed function has values greater than $3$. In particular, the constructed function will be the product of such functions although it takes higher values. 
  Indeed, one can always compose the resulting function with a $\C^\infty$ function $h:\R \to \R$ that maps the image into $[0,3)$.
\end{rem} 

\begin{rem}\label{rem_ineq_estimate_der_lambda}
 In Proposition \ref{pro_bump_graph} (resp. Proposition \ref{pro_bump_open_cell}), we require $|d\lambda|< \frac{N }{ \delta}$ on $\R^n$. In the proof we will sometimes just check it on $\V_{\delta}(\Gamma_\xi) \setminus \V_{b\delta}(\Gamma_\xi)$ (resp.  $\W_{c\delta}(C)\setminus \W_{\delta}(C)$) since on the complement of this set the estimates of the derivative $|\lambda|_1<\mu$ and $|d\lambda|<N$ will be better.
\end{rem}

 %Let us denote by $(\ag_n)$, $(\bg_n)$, $(\cg_n)$ and $(\dg_n)$ the respective statements of these four results,. arguing 
We are going to prove simultaneously  Theorem \ref{thm_approx_lips} and  Propositions  \ref{pro_bump_graph} and \ref{pro_bump_open_cell},  inductively on $n$. We do this because, given $n\in \N$,  on the one hand we can show that the statement of Theorem \ref{thm_approx_lips} for $(n-1)$ variable functions implies Propositions \ref{pro_bump_graph}  and \ref{pro_bump_open_cell}   in $\R^n$ (see steps  \ref{ste_sep_graph_int} and \ref{ste_sep_open_cell}  below) and we need, on the other hand,  these two statements in $\R^{n}$ to establish  Theorem \ref{thm_approx_lips} for $n$-variable functions (see step \ref{ste_ap0n}).

%We shall indeed show that 
% $(\ag_n)$ entails $(\bg_{n+1})$, $(\cg_{n+1})$,  and $(\dg_{n+1})$ (steps  \ref{ste_sep_graph_int}, \ref{ste_sep_n_cell}, and \ref{ste_cell_dim_n}), as well as  $(\ag_{n+1})$ (see step \ref{ste_ap0n}).
 
 To start our induction, we thus just have to check  Theorem \ref{thm_approx_lips}  in $\R^0=\{0\}$, which is obvious. We therefore fix $n \ge 1$ and assume that  Theorem \ref{thm_approx_lips} holds for  $(n-1)$-variable functions.

 \begin{rem}\label{rem_approx_comparaison}
  Given a Lipschitz function $\xi\in \D(\R^{n-1})$ and $\ep \in \spl(\R^{n-1})$, by our induction assumption,  we know that there is  $\zeta\in \D^\om(\R^{n-1})$ satisfying $|\zeta-\xi|<\ep$. We can actually require in addition that $\zeta<\xi$. Indeed, since we know that for every $j$ we can construct a function $\zeta_j$ satisfying $|\zeta_j-\xi| <\dej$ (see (\ref{eq_delta_j}) for $\dej$), it suffices to choose $j$ sufficiently  big to have $\dej<\frac{\ep}{2}$ and then $\zeta:=\zeta_j-\dej$ satisfies $|\zeta-\xi|<\ep$ and $\zeta<\xi$.
 \end{rem}

 \begin{step}\label{ste_cell_pos_codim_0}Given  definable  Lipschitz functions $ \xi$ and $f$  on $\R^n$, we show that there is a constant $L$ such that for each $\delta\in \D^0(\R^n,(0,1))$ there is an $L$-Lipschitz function $g\in \don$ such that $|f-g|<L\cdot \delta$ on $\V_{\delta}(\Gamma_{\xi})$.

For $x\in \R^{n-1}$, let $\theta(x):=f(x,\xi(x))$. This defines a $(1+L_\xi) \cdot L_f$-Lipschitz function. Let also 
$$\delta'(x):=\inf \{\delta(x,y) : y \in [\xi(x)-L_\xi-1,\xi(x)+L_\xi+1 ] \}.$$
Thanks to our induction hypothesis, we know that for every $\delta\in \D^0(\R^n,(0,1))$, we can find a $\C^\om$ function $g$ such that $|g-\theta|<\delta'$. Moreover, again thanks to the induction hypothesis, this function may be required to be $L$-Lipschitz, for some constant $L$ (independent of $\delta$).  
 
  We may regard the function $g$ as an $n$-variable function, constant with respect to the last variable. If $x=(\xt,x_n)\in \V_{\delta} (\Gamma_\xi)$, by  (\ref{eq_dist_au_graph}), we see that $|x_n-\xi(\xt)|\le (L_\xi+1)\delta(x)$, which, by definition of $\delta'$, entails  
  $\delta'(\xt)\le \delta(x)$ (since $\delta\le 1$).   
As a matter of fact,  for all $x=(\xt,x_n)\in \R^{n-1}\times \R$ we have
  $$|g(x)-f(x)|\le  |g(x)-f(\xt,\xi(\xt))|+|f(\xt,\xi(\xt))-f(\xt,x_n)|<\delta(\xt)+L_f|x_n-\xi(\xt)|$$
 which,  by (\ref{eq_dist_au_graph}), is smaller than $(1+L_f(L_\xi+1))\delta(x)$ if $x \in \V_{\delta} (\Gamma_\xi)$. This shows that $g$ has the required properties.
   \end{step}

  \begin{step}\label{ste_sep_graph} Given $L\in \R$, we show that there is a constant $N$ such that for every $L$-Lipschitz functions  $\alpha\in \D(\R^{n-1}, (0,1))$ and $\xi\in \D(\R^{n-1})$,  and every $\mu \in \spn$ there is $\lambda\in \D^\om(\R^n, [0,3))$ which satisfies:
 \begin{enumerate}[(i)]
  \item\label{item_mu_step_2}  $\lambda> 1$ on   $(-\infty,\xi]$ and  $|\lambda|_1<\mu$ on   $[\xi+\alpha,+\infty)$.
    \item\label{item_iii_step_2} $|d\lambda|<N$ on $(-\infty,\xi]$ and $|d\lambda|< \frac{N }{ \alpha}$ on $[\xi,+\infty)$.
 \end{enumerate}

%   
%   Let  $\xi:\R^{n-1}\to \R$ be a Lipschitz function. There is a constant $N$ such that for every $j\in \N$ and every $\mu \in \spn$ there is a $\C^\om$ function $\lambda_j$ which is a $(N,\dej,\mu)$ bump function for the set $\V_\dej(\Gamma_\xi)$. 
% 
% 
% It is enough to find a $(N,\dej,\mu)$ bump function for the set $(-\infty,\xi+\dej]$ since, for the same reasons, there will be as well a function which is a  $(N,\dej,\mu)$ bump function for the set $[\xi-\dej,+\infty)$ and the product of these two bump functions will provide the desired bump function (see Remark \ref{rem_produit_et_borne}).

Fix $L\in \R$ as well as some $L$-Lipschitz functions  $\alpha:\R^{n-1}\to (0,1)$ and $\xi:\R^{n-1}\to \R$.
By induction on $n$,  we know that we can find  $\zeta\in \St^\infty(\R^{n-1})$ such that on $\R^{n-1}$:
\begin{equation}\label{eq_zeta_xi}|\zeta-\xi|<\frac{\alpha}{2}.\end{equation}
                                                                    %where  $\alpha'$ is the function which assigns to $x\in \R^{n-1}$ the infimum of $\alpha(x,y)$ for $y\in [\xi(x)-2 , \xi(x)+2 ]$. 
%      The function $\alpha'$ is Lipschitz and its Lipschitz constant can be bounded independently of the $L$-Lipschitz function $\alpha$. Hence,  \agni \, 
Moreover, our induction hypothesis also ensures that  the Lipschitz constant of $\zeta$ may be assumed to be bounded independently of $\alpha$. We can also assume $\xi\le \zeta$ (see Remark \ref{rem_approx_comparaison}). 
For the same reason, there is a Lipschitz  function $\alpha'\in \D^\om(\R^{n-1})$   such that $|\alpha-\alpha'|<\frac{\alpha}{2}$, and  again, by  Remark \ref{rem_approx_comparaison}, we can assume $\alpha'\le \alpha$.

% For $j$ large enough, if $\zeta_j$ is such an approximation, we have $|\zeta_j|_m\le 2|\xi|_m$ (see Remark ???). $$Z:=\{(\xt,x_n)\in \R^n:|x_n-\zeta_j(\xt)|<\frac{\alpha_j}{2}(\xt)\}.$$ 
Let also $\mu'\in \spl(\R^{n})$ be a $\C^\om$-function such that $|\mu'|_1< \mu$.
 Define then a $\C^\om$ function $\lambda$  by setting for every $x=(\xt,x_n)\in \R^n$ 
 $$\lambda(x)=\psi(\mu'(x), \frac{\alpha'(\xt)}{2},x_n-\zeta(\xt)),$$where $\psi$ is provided by Lemma \ref{lem_psiep1}, and
 let us check that the function $\lambda$ has the required properties (\ref{item_mu_step_2}) and (\ref{item_iii_step_2}).

 If $x=(\xt,x_n)\in \R^n$  satisfies $x_n\le \xi(\xt)$ then $x_n \le \zeta(\xt)$, so that the first inequality of (\ref{item_mu_step_2}) follows from (\ref{item_psi_ge_1}) of Lemma \ref{lem_psiep1}.
 
%We also have to check that $\lambda_1$ is smaller than $\mu$ on the complement of $\V_\alpha(\Gamma_\xi)$. 
 To check the second inequality,  take $x=(\xt,x_n)\in \R^n$ which satisfies   $(x_n-\xi(\xt))\ge \alpha(\xt)$ and notice that then
 $$(x_n-\zeta(\xt))\ge (x_n-\xi(\xt))-|\xi(\xt)-\zeta(\xt)|\ge \alpha(\xt)- \frac{\alpha(\xt)}{2}= \frac{\alpha(\xt)}{2}\ge \frac{\alpha'(\xt)}{2},$$
 which,
 thanks to (\ref{item_mu}) of  Lemma \ref{lem_psiep1},  yields the claimed inequality.

 %If we suppose $|x_n-\xi(\xt)|\ge 2$ then 
%$$|x_n-\zeta(\xt)|\ge |x_n-\xi(\xt)|-|\xi(\xt)-\zeta(\xt)|\ge 1 \ge \frac{\alpha}{2}(x),$$
%as required. 
%In the case where $|x_n-\xi(\xt)|\le 2$ we have $\dejp(\xt)<\frac{\alpha}{2}(x)$ so that the desired estimate comes down from $$|x_n-\zeta(\xt)|\ge |x_n-\xi(\xt)|-|\xi(\xt)-\zeta(\xt)|\ge \alpha(x)-\frac{\alpha'}{2}(\xt) \ge  \frac{\alpha}{2}(x).$$
%so that,  by, we have $$\lambda_1(\xt,y) <\mu'(\xt) < \mu(x) .$$

It thus only remains to establish (\ref{item_iii_step_2}). Notice for this purpose  that if we set $$H(x):=(\mu'(x), \frac{\alpha'(\xt)}{2},x_n-\zeta(\xt)),$$
then $H$ is a Lipschitz mapping and we have $\lambda=H^*\psi$. If $x_n\le \xi(\xt) $ then $x_n-\zeta(\xt)\le 0 $ so that, by (\ref{eq_der_psi}), $|dH(x)|\le A$ which, since $\psi$ is Lipschitz, yields the first estimate of  (\ref{item_iii_step_2}). Furthermore, notice that
   there is a positive constant $A$ such that:
\begin{equation}\label{eq_pr_N_1}|d(\psi\circ H)|\le |dH|\cdot |d\psi|\circ H\overset{\mbox{(\ref{eq_der_psi})}}{<}L_{H} \cdot \frac{2A}{\alpha'}\le L_{H} \cdot \frac{4A}{\alpha}, \end{equation}
which yields the second  estimate of  (\ref{item_iii_step_2}).
%  this yields the desired estimate. %that  $\lambda$ is a $(N,j,\mu')$-bump function for  $\W_\eta(C)$ for some constant $N$ (independent of $\delta$).and since $|\mu'|_1<\mu <1$,  the set $\{ |d H|:j\in \N\}$ is bounded in $\spn$ (independently of  $\mu$). As a matter of fact,(\ref{eq_pr_N_1})  Since  $|dH|$ is bounded, 
 \end{step}

 \begin{step}\label{ste_sep_graph_cell}We establish the statement of Proposition \ref{pro_bump_graph} for  $n$.

Fix a positive constant $L$ and denote by $\kappa$ the corresponding constant provided by Lemma \ref{lem_v_w}. % We may assume $\kappa<\frac{1}{2L}$.

By step \ref{ste_sep_graph}, there is a positive constant $N$ such that for each $L$-Lipschitz functions $\xi:\R^{n-1}\to \R$ and $\delta:\R^{n}\to (0,1)$ and each  $\mu \in \spn$, we can find a $\C^\om$ function $\lambda_1:\R^n\to [0,3)$ satisfying $\lambda_1> 1$ and $|d\lambda_1|<N$ on  $(-\infty,\xi+\alpha]$ and
    $|\lambda_1|_1<\mu$ on  $[\xi+2\alpha,+\om)$,  and satisfying   for all $x=(\xt,x_n)\in \R^{n-1}\times \R$:
    \begin{equation}\label{eq_lambda_1_der}
|d_x\lambda_1|< \frac{N }{\alpha(\xt)}, 
    \end{equation}
    where we have set for $\xt\in \R^{n-1}$, $\alpha(\xt):=\frac{\kappa}{2}\delta(\xt,\xi(\xt))$.
  %  which estimate (\ref{item_iii_step_2}) of step \ref{ste_sep_graph} holds.

Step \ref{ste_sep_graph} also entails that if $N$ is large enough, we can find, for every $L$-Lipschitz functions $\xi$ and $\alpha$ on $\R^{n-1}$ and each  $\mu \in \spn$, a $\C^\om$ function $\lambda_2:\R^n\to [0,3)$ such that 
$ \lambda_2> 1$ and $|d\lambda_2|<N$ on $[\xi-\alpha,+\infty)$ and $|\lambda_2|_1<\mu$ on  $(-\infty,\xi-2\alpha] $, and for  which estimate (\ref{eq_lambda_1_der}) also holds
(since we can apply step \ref{ste_sep_graph} to the function $(-\xi+\alpha)$ to get a function $\lambda$ and set $\lambda_2(\xt,x_n):=\lambda(\xt,-x_n)$). 

But then the function $\lambda:\R^n \to [0,9]$ defined as $\lambda:=\lambda_1\cdot \lambda_2$ is a function satisfying:
\begin{enumerate}
  \item $\lambda> 1$ on $[\xi-\alpha,\xi+\alpha]$ and $|\lambda|_1<(6+N)\mu$ on  $\R^n\setminus[\xi-2\alpha,\xi+2\alpha]$.
   \item  $|d\lambda|<6N$ on $[\xi-\alpha,\xi+\alpha]$ and $|d\lambda|< \frac{6N }{ \alpha}$ on $\R^n\setminus [\xi-\alpha,\xi+\alpha]$.
 \end{enumerate}
 We  claim that this function has the required properties (see Remark \ref{rem_produit_et_borne}).  Indeed, thanks to Lemma \ref{lem_v_w}, we see that $(1)$  above implies $(i)$  of the proposition (it is enough to establish the desired statement for arbitrarily small functions $\mu$). Note also that, thanks to Lemma \ref{lem_v_w}, the first estimate in $(2)$ implies the first estimate of $(ii)$. 
 
 Moreover, if $\mu$ is chosen small enough then $(1)$ yields the second estimate that appears in $(ii)$ on the set  $\R^n\setminus[\xi-2\alpha,\xi+2\alpha]$.
 We thus only have to prove this inequality on $[\xi-2\alpha,\xi+2\alpha]$.  By Lipschitzness of $\delta$, for all $x=(\xt,x_n)$ in this set we have
 $|\delta(x)-\delta(\xt,\xi(\xt))|\le 2L\alpha(\xt)$, which implies that  for such $x$ 
 $$\delta(x) \le \delta(\xt,\xi(\xt))+ 2L\alpha(\xt)=(\frac{2}{\kappa}+2L)\cdot\alpha(\xt),$$
 which by the second inequality of $(2)$ just above, establishes the second estimate of $(ii)$. 
 \end{step}

    \begin{step}\label{ste_sep_graph_int}
     We establish the statement of  Proposition \ref{pro_bump_open_cell} in the case where the cell $C$ is of type $(\xi_0,\xi_1)$, where either $\xi_1$ is either $+\infty$ or a $\C^\om$ Lipschitz function on $\R^{n-1}$ and $\xi_0$ is  $-\infty$ or a $\C^\om$ Lipschitz function on $\R^{n-1}$   satisfying $\xi_0< \xi_1$.

%      given $L\in \R$, there is a constant $N$ such that for every $L$-Lipschitz functions  $\alpha:\R^{n-1}\to (0,1)$, $\xi:\R^{n-1}\to \R$ and $\xi_1:\R^{n-1}\to \R$ satisfying $\xi\le \xi_1$,  and every $\mu \in \spn$, there is a $\C^\om$ function $\lambda:\R^n\to [0,3)$ which satisfies:
%  \begin{enumerate}[(a)]
% \item\label{item_1_step_3} $\lambda> 1$ on the set $[\xi-\alpha,\xi_1+\alpha ]$.
%  \item\label{item_mu_step_3}  $|\lambda|_1<\mu$ on the set $\R^n\setminus [\xi-2\alpha,\xi_1+2\alpha ]$.
%     \item\label{item_iii_step_3}  $|d\lambda|< \frac{N }{ \alpha}$ on $\R^n$.
%  \end{enumerate}

We first assume that both $\xi_1$ and $\xi_0$ are finite. We shall use the same strategy as in the preceding step. The only difference is that we are now working with two functions $\xi_1$ and $\xi_0$ instead of one single function $\xi$. Fix a positive constant $L\ge \max(L_{\xi_0},L_{\xi_1})$ as well as an $L$-Lipschitz function  $\delta:\R^{n}\to (0,1)$ and   $\mu \in \spn$. Define then two $(n-1)$-variable functions by setting for $\xt\in \R^{n-1}$, $$\alpha_1(\xt):=\frac{\kappa}{2}\delta(\xt,\xi_1(\xt))\et \alpha_0(\xt):=\frac{\kappa}{2}\delta(\xt,\xi_0(\xt)),$$ where $\kappa$ is provided by Lemma \ref{lem_v_w}.
 
 By step \ref{ste_sep_graph},  we know that we can find a $\C^\om$ function $\lambda_1:\R^n\to [0,3)$ satisfying $\lambda_1> 1$ on the set $(-\infty,\xi_1-2\alpha_1]$ and
    $|\lambda_1|_1<\mu$ on the set $[\xi_1-\alpha_1,+\infty)$, and such that
    \begin{equation}\label{eq_dlambda_1}
      \mbox{$|d\lambda_1|<N$ on $(-\infty,\xi_1-2\alpha_1]\quad$  and $\quad|d\lambda_1|< \frac{N }{ \alpha_1}$ on $[\xi_1-2\alpha_1,+\infty)$}.
    \end{equation}

    %for  which the estimates (\ref{item_iii_step_2}) of step \ref{ste_sep_graph} hold (for some constant $N$).

Step \ref{ste_sep_graph} also entails that if $N$ is chosen large enough, we can find a $\C^\om$ function $\lambda_0:\R^n\to [0,3)$ such that 
$ \lambda_0> 1$ on $[\xi_0+2\alpha_0,+\infty)$ and $|\lambda_0|_1<\mu$ on  $\R^n\setminus [\xi_0+\alpha_0,+\infty)$ (since we can apply step \ref{ste_sep_graph} to the function $(-\xi_0-2\alpha_0)$ to get a function $\lambda$ and set $\lambda_0(\xt,x_n):=\lambda(\xt,-x_n)$), and such that \begin{equation}\label{eq_dlambda_2}
      \mbox{$|d\lambda_0|<N$ on $[\xi_0+2\alpha_0,+\infty)\quad$  and $\quad|d\lambda_0|< \frac{N }{ \alpha_0}$ on $(-\infty,\xi_0+2\alpha_0]$}.
    \end{equation}

We claim that the function $\lambda:=\lambda_1\cdot \lambda_0$ has the desired properties (see Remark \ref{rem_produit_et_borne}). Indeed, since $|\lambda|_1<(6+\frac{N}{\delta})\mu$ on the set $(-\infty, \xi_0+\alpha_0 ]$ and on $[ \xi_1-\alpha_1 ,+\infty)$, it follows from Lemma \ref{lem_v_w} that $|\lambda|_1<(6+\frac{N}{\delta})\mu$ on $\R^n \setminus \W_{c\delta}(C)$, for $c$ small enough (it is enough show that this inequality can be obtained for arbitrarily small functions $\mu$). As in addition we have  $\lambda>1$ on  $[\xi_0+2\alpha_0, \xi_1-2\alpha_1 ]$ which, by Lemma \ref{lem_v_w} (and choices of $\alpha_0$ and  $\alpha_1$), contains $\W_{\delta}(C)$, we can see that $(\ref{item_w_1})$ holds.

%I $(\ref{item_w_2})$

By (\ref{eq_dlambda_1}) and (\ref{eq_dlambda_2}), we see that $|d\lambda|<6N$ on  $[\xi_0+2\alpha_0,\xi_1-2\alpha_1]$, which, thanks to Lemma \ref{lem_v_w}, already yields the first part of $(\ref{item_w_2})$.  

It suffices to prove the second inequality of $(\ref{item_w_2})$  on $(\xi_0,\xi_1)$ (see Remark \ref{rem_ineq_estimate_der_lambda}). Since it is enough to establish it for both $\lambda_1$ and $\lambda_0$, we will focus on  $\lambda_1$, the corresponding argument for  $\lambda_0$ being completely analogous.% We will focus on the set $(\xi_1-2\alpha_1,\xi_1)$, the corresponding argument for the set $(\xi_0,\xi_0-2\alpha_0)$ being completely analogous.
%Since $|\lambda_1|_1<\mu$ on   $[\xi_1-\alpha_1,+\infty)$ and  $|\lambda_0|_1<\mu$ on  $(-\infty,\xi_0+\alpha_0]$, we have, if  $\mu$ is smaller than $\frac{N}{6}$,  $|d\lambda|<6\mu<\frac{N}{\delta}$ on  the set  $\R^n\setminus [\xi_0+\alpha_0,\xi_1-\alpha_1]$. We thus only have to establish the desired estimate on $ [\xi_0+\alpha_0,\xi_1-\alpha_1]$. Let us show it for $\lambda_1$.\setminus [\xi_0+2\alpha_0,\xi_1-2\alpha_1]

By Lipschitzness of $\delta$, for all $x=(\xt,x_n)$ in $(\xi_1-2\alpha_1,\xi_1)$ we have 
 $|\delta(x)-\delta(\xt,\xi_1(\xt))|\le 2L\alpha_1(\xt)$, which implies that  for such $x$ 
 $$\delta(x) \le \delta(\xt,\xi_1(\xt))+ 2L\alpha_1(\xt)=(\frac{2}{\kappa}+2L)\alpha_1(\xt),$$
 which by (\ref{eq_dlambda_1})  entails that $|d\lambda_1|<\frac{N(\frac{2}{\kappa}+2L)}{\delta}$ on $(\xi_1-2\alpha_1,\xi_1)$. On $(-\infty,\xi_1-2\alpha_1)$, this follows from the first inequality of (\ref{eq_dlambda_1}).
 
 %We thus now have to prove the analogous estimate for the restriction of $\lambda_0$ to $(\xi_1-2\alpha_1,\xi_1)$.
  %, as required. %Of course, the same argument (replacing (\ref{eq_dlambda_1}) with (\ref{eq_dlambda_2})) proves the corresponding result for $\lambda_0$, which yields in turn the desired inequality for $\lambda$.
 
 To complete the proof of step \ref{ste_sep_graph_int}, note that in the case where  $\xi_1$ (resp. $\xi_0$) is infinite, the function $\lambda_1$ (resp. $\lambda_0$) has all the required properties.
\end{step}
 
  \begin{step}\label{ste_sep_open_cell}We establish the statement of Proposition \ref{pro_bump_open_cell} for  $n$. 

Let $D_i:=\pi_i(C)$, $0\le i\le n$, where $\pi_i :\R^n\to \R^i$ is the canonical projection. For every $i\ge 1$, we can write $D_i$ as $(\xi_i,\xi'_i)$  where $\xi_{i}$ is either $-\infty$ or a $\C^\om$ Lipschitz function on $D_{i-1}$ and $\xi_{i}'$ is either $+\infty$ or a $\C^\om$ Lipschitz function on $D_{i-1}$ satisfying $\xi_i< \xi_i'$. 

We can extend $\xi_i$ and $\xi'_i$ to Lipschitz functions $\overline{\xi}_i$ and $\overline{\xi}'_i$ on $\R^{i-1}$ satisfying $\overline{\xi}_i\le \overline{\xi}'_i$, and then, regarding these extensions as constant with respect  to the $(n-i)$ last variables, to Lipschitz functions on $\R^{n-1}$. Set then $$C_i:=\{(x_1,\dots,x_n)\in \R^n:\overline{\xi}_i(x_1,\dots , \hat{x_i} ,\dots,x_{n-1} ) <x_i<\overline{\xi}'_i(x_1,\dots , \hat{x_i} ,\dots,x_{n-1} ) \},$$
where $\hat{x_i}$ means that the coordinate $x_i$ is omitted. 

It now follows from Step \ref{ste_sep_graph_int} that for each $i$, each $L$-Lipschitz function  $\delta$ and each $\mu\in \spn$, there is a $\C^\om$ function $\lambda_i$ on $\R^n$ satisfying 
 $\lambda_i> 1$ on the set $ \W_{\delta}(C_i)$ and 
  $|\lambda_i|_1<\mu$ on the set $\R^n\setminus \W_{c\delta}(C_i)$, and satisfying $|d\lambda_i|\le N$ on  $ \W_{\delta}(C_i)$ as well as  $|d\lambda_i|< \frac{N }{ \delta}$  (for some constants $c$ and $N$ independent of $\mu$ and $\delta$). Indeed,  Step \ref{ste_sep_graph_int} ensures that this fact holds for $C_n$, and, since we can interchange the $i^{th}$ and the $n^{th}$ coordinates, we see that this is true as well for all the $C_i$. 
  
But since $C=\cap_{i=1} ^n C_i$, this implies that the function $\lambda:=\lambda_1\cdots \lambda_n$ has all the required properties (see Remark \ref{rem_produit_et_borne}). 
  \end{step}

  \begin{step}\label{ste_ap0n}We prove the statement of Theorem \ref{thm_approx_lips} for $n$, completing the induction step.
  
   Fix a Lipschitz function $f:A\to \R$,   $A\subset \R^n$. The function $f$ can be extended to an $L_f$-Lipschitz function on $\R^n$ (still denoted $f$). For $j\in \N$ large enough, we shall construct a Lipschitz  $\C^\om$ function $g_j:\R^n\to \R$ satisfying $|g_j-f|<a\cdot \dej$, with $a\in \R$ independent of $j$.
As the Lipschitz constant of $g_j$ will be bounded independently of $j$, this will be enough for our purpose.   
   
  Let $\Sigma$ be a stratification  of $\R^n$  as provided by Proposition \ref{pro_stratification}. We denote by $\Sigma'$ the collection of all the strata  of $\Sigma$ that are maximal, in the sense that they do not lie in the closure of another stratum. We denote by
   $\Sigma_0$ the set constituted by all the strata of $\Sigma'$ of dimension $n$ and by 
$\Sigma_1$ the set of the strata of $\Sigma'$ that are of positive codimension. 
Hence, $\Sigma'=\Sigma_0\cup \Sigma_1$.

 %Let $N$ be a constant bigger than $N_S$, for all $S\in \Sigma_3$. 
 Let $S\in \Sigma_1$. Up to an orthonormal change of coordinates, $S$ is the graph of some Lipschitz function $\xi_S:D\to \R$, where $D$ is a Lipschitz cell of $\R^{n-1}$. As no confusion may arise, we will identify  $S$ with the graph of $\xi_S$. We can extend $\xi_S$ to an $L_{\xi_S}$-Lipschitz function $\overline{\xi}_S$ defined on $\R^{n-1}$.% Let us denote by $G_S$ the graph of $\overline{\xi}_S$.
 
 By step \ref{ste_cell_pos_codim_0}, there is a constant $L_S$ such that  for every $S\in \Sigma_1$  and every $j$, there is a $\C^\om$ function $g_{S,j}$ which satisfies on $\V_{\dej} (\Gamma_{\xib_S})$ \begin{equation}\label{eq_g_f_sigma_2}
      |g_{S,j}-f|<L_S\cdot  \dej  .                                                                                                                                                                                                                                                                                                                                                                                                                   \end{equation}
      
% By Lemma \ref{lem_v_w}, if $b$ is a sufficiently small positive constant then
%  $$  \V_{b \dej}(\Gamma_{\xib_S})\;\subset\;[\xib_S-\frac{\dejp}{2},\xib_S+\frac{\dejp}{2}].$$
%  Remark that since $\Sigma_0$ is finite, we can choose a common $b$ for all the strata $S\in \Sigma_0$.                                                                                                                                                                    In particular, the sets  $\W_{b\delta_j}(S)$, $S\in \Sigma_1$, and $[\xib_S-\frac{\dejp}{2},\xib_S+\frac{\dejp}{2}]$, $S\in \Sigma_0$, cover $\R^n$ for all $j$.
%       

Let $L$ be a constant bigger than all the  $L_{\xi_T}$, $T\in \Sigma_1$, and let $b$ be the constant obtained by applying Proposition \ref{pro_bump_graph} to this $L$.  
 Moreover, applying Proposition \ref{pro_bump_open_cell} to all the strata of $\Sigma_0$ (which are open Lipschitz cells after a possible orthonormal change of coordinates), we get a positive constant $c$.

Fix now a stratum  $S\in \Sigma_0$. By Lemma \ref{lem_app_open_cell}, for every $j\in \N$, there is a $\C^\om$ function $g_{S,j}$ on $\R^n$ which satisfies on $\W_{bc\dej}(S) $ \begin{equation}\label{eq_g_f_sigma_1}                                                                                                                                                                                                                                                                                                                                                                                                                                                                         
                                               |g_{S,j}-f|_1<\dej.                                                                                                                                                                                                                                                                                                                                                                                                                            \end{equation}
%  Because  $\Sigma_3$ is a finite set, we can choose the same $j$ for all the strata $S\in \Sigma_3$. 
Now, for  $j$ sufficiently big we can set:  
  \begin{equation}\label{eq_dfn_mu}
   \mu_j:=\min_{S\in \Sigma'} \frac{\dej}{1+|f|+|g_{S,j}|_1}.
  \end{equation}

 We then are going to use the bump functions provided by Propositions \ref{pro_bump_graph} and \ref{pro_bump_open_cell} to construct the desired approximation by means of the $g_{S,j}$. 
 
Let for this purpose $S$ be an element of $\Sigma_1$. As above, we will identify $S$  with the graph of $\xi_S$.
By  Proposition \ref{pro_bump_graph},  there exists a constant $N_S\ge 1$ such that for every $j$ 
there is a $\C^\om$ function $\lambda_{S,j}:\R^n\to [0,3)$ such that 
 $|\lambda_{S,j}|_1<\mu_j$ on the set $\R^n\setminus\V_{\dej}(\Gamma_{\xib_S})$,
 $\lambda_{S,j}> 1$ on the set $\V_{b\dej}(\Gamma_{\xib_S})$, and on $\R^n$
 \begin{equation}\label{eq_dlambda}|d\lambda_{S,j}|< \frac{N_S }{ \delta_j}.\end{equation}

%Let $L$ be a constant bigger than all the Lipschitz constant of the functions $\xi_S$, $S\in \Sigma_0$, and 
Let now $S$ denote a stratum of $\Sigma_0$. By  Proposition \ref{pro_bump_open_cell}, there is a constant $N_S$ such that for every $j\in \N$  there is a $\C^\om$ function  which satisfies $|\lambda_{S,j}|_1<\mu_j$ on the set $\R^n\setminus \W_{bc\delta_j}(S)$, $\lambda_{S,j}> 1$ on the set $ \W_{b\delta_j}(S)$, and which satisfies (\ref{eq_dlambda}) for some constant $N_S$.
  
  %s an $(N_S,\frac{c}{2}\delta_j,\mu_j)$-bump function on $\W_{c\dej}(S)$ (where $c$ is the constant that satisfies (\ref{eq_c}) above).
 
%  For each $S\in \Sigma_0$, by choice of $N_S$, we know that there is a function  $\lambda_S$ which is a $(N_S,\delta_j,\mu)$-bump function for $\V_{\dej}(\Gamma_{\xi_S})$ (for the choice of $j$ and $\mu$ that we just made). Moreover, for each $S\in \Sigma_1$,   we know that there is a function  $\lambda_S$ which is a $(N_S,\frac{c}{2}\delta_j,\mu)$-bump function on $\W_{c\dej}(S)$.

Set now \begin{equation}\label{eq_dfn_g}
g_j:= \sum_{S\in \Sigma'} \theta_{S,j}\cdot g_{S,j}\quad \mbox{ where }\;\;\theta_{S,j} :=\frac{\lambda_{S,j}}{\sum_{T\in \Sigma'}\lambda_{T,j}}.         
        \end{equation}

 Let us check that $g_j$ is the desired approximation. Observe first that the sets $\V_{b\dej}(\Gamma_{\xib_T})$, $T\in \Sigma_1$, together with the sets $\W_{b\dej}(T)$, $T\in \Sigma_0$, cover $\R^n$. Therefore, for every $x\in \R^n$, there is a stratum $T\in \Sigma'$ such that $\lambda_{T,j}(x)>1$. This shows that $\sum_{T\in \Sigma'}\lambda_{T,j}>1$, which proves that $\theta_{S,j}\le \lambda_{S,j}$, for each $S\in \Sigma'$.   By (\ref{eq_dfn_mu}), this implies that for every $S\in \Sigma_1$ we have  on $\R^n \setminus \V_{\dej}(\Gamma_{\xib_S})$
 $$\theta_{S,j}|g_{S,j}-f|< \dej.$$
  By (\ref{eq_g_f_sigma_2}), we deduce that  on $\R^n$ we have for such a stratum:
 \begin{equation}\label{eq_g_f_lipschitz} \theta_{S,j}|g_{S,j}-f|<L_S'\cdot \dej,\end{equation}
with $L'_S:=\max(L_S,1)$. Moreover, the same argument (replacing (\ref{eq_g_f_sigma_2})  with (\ref{eq_g_f_sigma_1})) yields the same estimate for the strata of $\Sigma_0$.
As a matter of fact, we can write:  
%  by (\ref{eq_g_f_sigma_1}), $|g_j-f|< \dej$  on $\W_{b\dej}(S)$, for all $S\in \Sigma_1$. Moreover, , we can write (since $\sum_{S\in \Sigma'}\theta_{S,j}=1$):
%  \begin{equation}\label{eq_g_f_lipschitz} $|g_j-f|<L_S\cdot \dej$ on $\V_{\dej}(G_S)$ for all $S\in \Sigma_0$, and T
  $$|g_j-f|=|\sum_{S\in \Sigma'}\theta_{S,j}\cdot(g_{S,j}-f)|\overset{(\ref{eq_g_f_lipschitz})}{<}a \cdot \dej,$$
 %\end{equation}
 for some constant $a$ (independent of $j$).  It thus remains to establish the Lipschitz character of $g_j$. We shall provide a bound for its derivative.

 Observe for this purpose that by definition of $g_j$ we have for $x$ in an open dense subset of $\R^n$:
 \begin{equation}\label{eq_g_f_lips}
   d(g_j-f) =d\sum_{S\in \Sigma'} \theta_{S,j} (g_{S,j}- f)
  = \sum_{S\in \Sigma'} \theta_{S,j}\cdot d(g_{S,j}- f) +(g_{S,j}- f)\cdot d \theta_{S,j}.
\end{equation} 
We first check that $ \theta_{S,j}|d(g_{S,j}- f)|$ is bounded independently of $j$. This is clear if $S$ is a stratum of $\Sigma_1$ since both $\theta_{S,j}$  and $ |d(g_{S,j}- f)|$ are bounded. In the case where $S\in \Sigma_0$, by (\ref{eq_g_f_sigma_1}), we see that $ \theta_{S,j}|d(g_{S,j}- f)|$ is bounded on  $\W_{bc\delta_j}(S)$. On the complement of this set, because $|\theta_{S,j}|<\mu_j$, by (\ref{eq_dfn_mu}), we see that the result is also clear.
It thus suffices to check that $|(g_{S,j}- f)\cdot d \theta_{S,j}|$ is bounded as well.

We first deal with the case where  $S$ belongs to $\Sigma_1$. Remark that, thanks to (\ref{eq_dlambda}), a straightforward computation of derivative shows that 
   there is a constant $N_S'$ (independent of $j$) such that we have:% on $\V_{\frac{c}{2}\dej}(\W_{c\dej}(S))$:
\begin{equation}\label{eq_theta_S_Sigma_1_I}
  |d\theta_{S,j}|\le \frac{ N_S'}{\dej}.
\end{equation}
 By (\ref{eq_g_f_sigma_2}), we see that 
this inequality already shows that  $(g_{S,j}- f)\cdot |d \theta_{S,j}|$ is bounded on $\V_{\dej}(\Gamma_{\xib_S})$.

On the complement of this set, because we have $|\lambda_{S,j}|_1<\mu_j$, a direct computation of derivative shows  that there is a constant $N''_S$ such that we have 
\begin{equation}\label{eq_theta_S_Sigma_1_II}
   |d\theta_{S,j}|<\mu_j \cdot \frac{N_S''}{\dej}\overset{(\ref{eq_dfn_mu})}{\le}  \frac{N_S''}{1+|g_{S,j}|+|f|},
\end{equation}
% (by (\ref{eq_dfn_mu}))for all $x\in\V_{\frac{c}{2}\dej}(\W_{c\dej}(S))$,
which clearly entails that $|(g_{S,j}- f)\cdot d \theta_{S,j} |$ is bounded on $\R^n\setminus \V_{\dej}(\Gamma_{\xib_S})$.
%  \begin{equation}\label{eq_g-f_theta}
%   |(g_S- f)\cdot d \theta_S |< \frac{|g_{S,j}-f|}{1+|g_{S,j}-f|} ,
%  \end{equation}
%   which is bounded. 

 We now address the case of a stratum $S$ of $\Sigma_0$. On $\W_{bc\delta_j}(S)$, we have $|g_{S,j}-f|<\delta_j$, an since  (\ref{eq_theta_S_Sigma_1_I}) holds for the strata of $\Sigma_0$ as well, we see that $(g_{S,j}- f)\cdot d \theta_{S,j}$ is bounded on this set.
Moreover, on the complement of this set, because we have $|\lambda_{S,j}|_1<\mu_j$ we see that (\ref{eq_theta_S_Sigma_1_II}) holds for $S$, which entails  that $ |(g_{S,j}- f)\cdot d \theta_{S,j} | $ is a bounded function.
% 
% 
% we have $
%   |d\theta_S|\le \frac{ N_S'}{\dej}$  and. The result is thus clear on $\V_{\dej}(\Gamma_{\xi_S})$. Moreover, by (\ref{eq_theta_S_lips_gamma_xi})
%  $ |\theta_S|\le \mu \cdot |g_{S,j}-f|$ on $\R^n\setminus \V_{\dej}(\Gamma_{\xi_S})$, which by (\ref{eq_dfn_mu}), is a bounded function. This yields that $g_j$ is a Lipschitz function.
  \end{step}

  \end{section}

\begin{section}{$\C^\om$-approximations of $\C^1$ functions}
We prove in this section that $\D^\om(M)$ is dense in $\D^1(M)$, for $M$ $\C^\om$ submanifold of $\R^n$. We will first prove it  in the case where the considered manifold is $\R^n$ (Proposition \ref{pro_rn}) in order to derive it for an arbitrary  definable submanifold of $\R^n$ (Theorem \ref{thm_ef_C_1}).

\subsection{The case $M=\R^n$.} The strategy is somehow similar to the one used in the proof of Theorem \ref{thm_approx_lips}. We shall however need  approximations with sharp estimates on the derivative.  This motivates the following definition.
\begin{dfn}
Let  $f\in \D^1(\R^n)$ and take $\ep$ and $\delta$ in $\spn$.
 Given $A\subset \R^n$, we say that a $\C^\om$ function $g:\R^n\to \R$ induces a {\it $(\delta,\ep)$-approximation of $f$  on $A$} if we have $|f-g|<\delta \cdot \ep$ and  $| df-dg|< \ep$  on  $A$.
 \end{dfn}

\begin{pro}\label{pro_rn}
 $\din$ is dense in $\D^1(\R^n)$ for the $\C^1$ Efroymson topology.% Let $f:\R^n\to \R$ be a $\C^1$ function. Given $\ep\in \spn$, there is a $\C^\om$ function $g:\R^n\to \R$ such that $|f-g|_1<\ep$. 
 %, $A\subset \R^n$  closed set
\end{pro}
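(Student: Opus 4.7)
The plan is to run an argument parallel to the one used for Theorem \ref{thm_approx_lips}, but using $(\delta_j,\ep)$-approximations on neighborhoods of strata so as to get control on the derivative as well. Fix $f\in \D^1(\R^n)$ and $\ep\in \spn$. Applying Proposition \ref{pro_stratification}, take a stratification $\Sigma$ of $\R^n$ such that $f$ is $\C^\om$ on every stratum and each stratum is a Lipschitz cell (possibly after an orthonormal change of coordinates). As in the Lipschitz case, keep only the maximal strata $\Sigma'=\Sigma_0\cup\Sigma_1$, where $\Sigma_0$ consists of the strata of dimension $n$ and $\Sigma_1$ of those of positive codimension. The proof proceeds by induction on $n$, the case $n=0$ being trivial.

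For every stratum $S\in\Sigma_0$ and every $j$, apply Lemma \ref{lem_app_open_cell} to $f_{|S}\in\D^\om(S)$ to obtain a $\C^\om$ function $g_{S,j}:\R^n\to\R$ that provides a $(\delta_j,\ep')$-approximation of $f$ on $\W_{b c\delta_j}(S)$, for $\ep'$ a sufficiently small multiple of $\ep$ and with $b,c$ the constants coming from Propositions \ref{pro_bump_graph} and \ref{pro_bump_open_cell}. The delicate case is $S\in\Sigma_1$: after a possible orthonormal change of coordinates, $S$ is the graph of a Lipschitz $\C^\om$ function $\xi_S$ on a Lipschitz cell $D\subset\R^{n-1}$, and the restriction of $f$ (together with its partial derivatives) to $S$ is $\C^\om$. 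Writing $x=(\xt,x_n)$, the idea is to approximate $f$ near $S$ by a first-order Taylor-like expression along the normal direction: set
$$g_{S,j}(\xt,x_n):=a_j(\xt)+b_j(\xt)\cdot(x_n-\zeta_j(\xt)),$$
where $\zeta_j$, $a_j$, $b_j$ are $\C^\om$ Lipschitz approximations of $\overline{\xi}_S$, $\xt\mapsto f(\xt,\xi_S(\xt))$ and $\xt\mapsto \partial_{x_n} f(\xt,\xi_S(\xt))$ respectively, provided by Theorem \ref{thm_approx_lips} applied to suitable Lipschitz extensions to $\R^{n-1}$. Since $f$ is $\C^1$, the mean value inequality together with the $\C^0$ estimate coming from the stratification ensures that, for $j$ large enough, $g_{S,j}$ is a $(\delta_j,\ep')$-approximation of $f$ on the neighborhood $\V_{\delta_j}(\Gamma_{\overline{\xi}_S})$; an inductive (on codimension) version of this argument handles higher-codimensional strata.

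The gluing step is the same as in Step \ref{ste_ap0n}, using the bump functions $\lambda_{S,j}$ provided by Propositions \ref{pro_bump_graph} and \ref{pro_bump_open_cell} with
$$\mu_j:=\min_{S\in\Sigma'}\frac{\delta_j\cdot\ep}{1+|f|+|g_{S,j}|_1},$$
and setting $\theta_{S,j}:=\lambda_{S,j}/\sum_{T\in\Sigma'}\lambda_{T,j}$ and $g_j:=\sum_{S\in\Sigma'}\theta_{S,j}\,g_{S,j}$. The $\C^0$ estimate $|g_j-f|<a\delta_j\ep$ is obtained exactly as before. For the derivative, decompose
$$d(g_j-f)=\sum_{S\in\Sigma'}\theta_{S,j}\,d(g_{S,j}-f)+\sum_{S\in\Sigma'}(g_{S,j}-f)\,d\theta_{S,j}.$$
The first sum is controlled by the $(\delta_j,\ep)$-approximation property on the region where $\theta_{S,j}$ is not already tiny. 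For the second sum, the bounds $|d\theta_{S,j}|\le N_S/\delta_j$ on the supports where $\theta_{S,j}$ is not small (Propositions \ref{pro_bump_graph}, \ref{pro_bump_open_cell}) combine with $|g_{S,j}-f|<\delta_j\ep$ to give a bound of order $\ep$; outside those supports one uses $|\lambda_{S,j}|_1<\mu_j$ to absorb the $|g_{S,j}-f|$ factor, exactly as in \eqref{eq_theta_S_Sigma_1_II}.

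The main obstacle is the construction of the $(\delta_j,\ep)$-approximations $g_{S,j}$ associated with strata of positive codimension. On such a stratum one only controls the jet of $f$ along the stratum, so one has to extend that jet normally and then smooth it in a way that respects both the $\C^0$ and the $\C^1$ error bounds required by the definition; this is where the availability of Theorem \ref{thm_approx_lips} (including its uniform Lipschitz bound) and the $\C^\infty$-regularity of $f$ along strata are used crucially. Once this local construction is in hand, the rest is bookkeeping patterned on Step \ref{ste_ap0n}.
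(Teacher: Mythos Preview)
Your overall architecture matches the paper's (induction on $n$, local Taylor-like approximations near positive-codimension strata, Lemma \ref{lem_app_open_cell} on open strata, then gluing via Propositions \ref{pro_bump_graph} and \ref{pro_bump_open_cell}), but the construction of $g_{S,j}$ for $S\in\Sigma_1$ has a genuine gap. You produce $a_j,b_j,\zeta_j$ by Theorem \ref{thm_approx_lips}; that theorem only gives $\C^0$ approximation with a uniform Lipschitz bound, not $\C^1$ closeness. For $i<n$ one has
\[
\partial_{x_i} g_{S,j}(\xt,x_n)=\partial_{x_i}a_j(\xt)+\partial_{x_i}b_j(\xt)\,(x_n-\zeta_j(\xt))-b_j(\xt)\,\partial_{x_i}\zeta_j(\xt),
\]
and nothing in Theorem \ref{thm_approx_lips} forces $\partial_{x_i}a_j$ to be close to $\partial_{x_i}\big(f(\xt,\xi_S(\xt))\big)$ (nor $\partial_{x_i}\zeta_j$ close to $\partial_{x_i}\overline\xi_S$); a uniformly Lipschitz approximation can have tangential derivative bounded but wildly different from the target's, so the required estimate $|d(g_{S,j}-f)|<\ep$ near $S$ does not follow. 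A second, related issue is that the target for $b_j$, namely $\xt\mapsto \partial_{x_n}f(\xt,\xi_S(\xt))$, is only continuous (since $f$ is merely $\C^1$), so Theorem \ref{thm_approx_lips} is not even applicable to it.

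The paper closes this gap by actually using the induction hypothesis on $n$: after straightening the graph to $\xi\equiv 0$, it sets $\lambda_0(\xt)=f(\xt,0)$ and replaces your $\partial_{x_n}f(\xt,0)$ by the $\C^1$ difference quotient $\lambda_1(\xt)=\big(f(\xt,\delta'(\xt))-f(\xt,0)\big)/\delta'(\xt)$, and then applies Proposition \ref{pro_rn} in dimension $n-1$ to get $\mu_0,\mu_1\in\D^\om(\R^{n-1})$ with $|\mu_i-\lambda_i|_1<\ep'\delta'$. The $\C^1$ closeness of $\mu_0$ is what makes the tangential derivative estimate go through, and the choice of a difference quotient (rather than $\partial_{x_n}f$) is what makes $\lambda_1$ a $\C^1$ function to which the induction hypothesis applies. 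Your sketch declares the induction on $n$ but never invokes the hypothesis; once you replace the appeal to Theorem \ref{thm_approx_lips} by this inductive $\C^1$ approximation (and switch to the difference quotient), the rest of your argument lines up with the paper.
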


\begin{proof} We prove the result by induction on $n$, starting at $n=0$, for which the statement is vacuous. Let $f\in \D^1(\R^n)$ and $\ep\in \spn$. We split the induction step into two steps.
%\underline{First step:} %We establish $\ap( E\times\{0\})$, for each closed set  $E\in \D_{n-1}$.
\begin{ste}\label{ste_cell_r_n-1} Given a  Lipschitz function $\xi\in \D(\R^{n-1})$ and $ \ep \in \spn$, we show that  there exists $\eta \in \spn$ such that for each   $\delta\in \D^1(\R^n,(0,1))$ satisfying $|\delta|_1\le \eta$,  there is $g\in \din$ which induces a $(\delta,\ep)$-approximation of $f$ on $[\xi-\delta',\xi+\delta']$, where $\delta'(\xt):=\delta(\xt,\xi(\xt))$.
%$V:=\{x=(x_1,\dots ,x_n)\in \R^{n}: |x_n|<\delta(x)\}$.

 We first reduce it  to the case $\xi\equiv 0$, i.e., we establish the claimed statement in general assuming it to be true in the particular case where $\xi$ is identically zero.
By Theorem \ref{thm_approx_lips},  there is a $\C^\om$ Lipschitz function $\xi'$ on $\R^n$ satisfying $|\xi-\xi'|<\delta'$ on $\R^{n-1}$, where \begin{equation}\label{eq_ep_prime}
                                                               \ep'(\xt):=\inf\{\ep(\xt,x_n): x_n\in [\xi(\xt)-1,\xi(\xt)+1]\}.
                                                               \end{equation}
         Moreover, we can assume $\xi'$ to be Lipschitz with a Lipschitz constant bounded independently of $\delta$ and $\ep$.

%We may assume $\ep<1$. 
Let us define a $\C^\om$ diffeomorphism $\theta:\R^n\to \R^n$ by $\theta(\xt,x_n)=(\xt,x_n-\xi'(\xt))$ and    set  $u:= \theta_{*}f$.  Apply the case $\xi\equiv 0$ to the function $u$ to get a function $v\in \din$ which induces a $(2\theta_{*}\delta,\frac{\theta_{*}\ep}{2})$-approximation of $u$  on $[-2\delta',2\delta']$ (for each $\delta$ with $|\delta|_1$ sufficiently small).
Let then $g:=\theta^{*}v$ and observe that  since $|v-u|<\theta_{*}\ep\cdot \theta_{*}\delta$ on $[-2\delta',2\delta']$, we already see that $ |g-f|=|\theta^*(v-u)|<\ep\cdot \delta$ on 
$[\xi'-2\delta',\xi'+2\delta']$. Since $|\xi-\xi'|<\delta'$, we also see that $[\xi'-2\delta',\xi'+2\delta']$ contains $[\xi-\delta',\xi+\delta']$.
Moreover, 
$$
 |d (g-f)|=|d (\theta^*(v-u))|\le  |d \theta|\cdot |d (v- u)|\circ \theta <  L_\theta\cdot \ep .
$$ As $L_\theta$ is bounded independently of $\delta$ and $\ep$, this completes our reduction.% to the case where $\xi$ is zero. 

We now shall prove the result in the case where $\xi\equiv 0$. Fix $\ep \in \spn$ and let $\delta \in \spn$ be a $\C^1$ function satisfying $|\delta|_1<\frac{1}{2}$.  Such a function being $\frac{1}{2}$-Lipschitz, it is easily checked that on $[-\delta',\delta']$ we have:
\begin{equation}\label{eq_delta_deltaprime}
 \delta'(\xt)\le2\delta(x).
\end{equation}

 By Lemma \ref{lem delta ep}, if $\nu \in \spn$ is a small enough function then for all $x$ and $x'$ in $\R^n$ satisfying $|x-x'|<\nu(x)$ we have: 
\begin{equation}\label{eq_lambda_vert}
 |f(x)-f(x')|<\ep(x) \et  |d_xf-d_{x'}f|\le \ep(x).
\end{equation}
%Fix a function $\delta$ sufficiently small to have this property, and in addition, to satisfy  $\V_\delta(C)\subset V$ as well as $\delta\le 1$.$\delta'(\xt):=\inf\{\delta(\xt,x_n): x_n\in [-1,1]\}$  
 For $\xt \in  \R^{n-1}$, let
 $$\lambda_{0}(\xt):=f(\xt,0)\et \lambda_{1}(\xt):=\frac{f(\xt,\delta'(\xt))-f(\xt,0)}{\delta'(\xt)}.$$
 %and define an $(n-1)$-variable positive continuous function by setting $\ep'(\xt):=\inf\{\ep(\xt,x_n): x_n\in [-1,1]\}$.   
 By  induction on $n$,  there are  $\mu_{0}$ and $\mu_{1}$ in $\D^\om(\R^{n-1})$ such that  we have:
\begin{equation}\label{eq_mu_lambda}
|\mu_{0}- \lambda_{0}|_1<\ep'\cdot \delta'\et |\mu_{1}- \lambda_{1}|_1<\ep'\cdot \delta',
\end{equation}%\delta^m
where $\ep'$ is as in (\ref{eq_ep_prime}) (with $\xi\equiv 0$). Define then   $g\in \din$ by setting
$$g(\xt,x_n):= \mu_{0}(\xt)+x_n\cdot \mu_{1}(\xt).$$
 We shall show that if $|\delta|_1$ is sufficiently small then the function $g$ has the desired properties, i.e., we shall check that for $\delta$ small enough  we have on $(-\delta',\delta')$:
 \begin{equation}\label{eq_f_g}
|f-g|< \delta\cdot \ep \et |d f-d g|< \ep .  
 \end{equation}

Observe first that it follows from (\ref{eq_lambda_vert}) that for $\delta$ small enough we have for all $x=(\xt,x_n)\in (-\delta',\delta')$:
\begin{equation}\label{eq_lambda_moins_der}
|\lambda_{1}(\xt)-\frac{\pa f}{\pa x_n}(\xt,x_n)| < \ep(x). 
\end{equation}

Applying Taylor's formula to $f$, we see that   there is a function $c$ on $\R^{n-1}$  with  $|c(\xt)|\le \delta'(\xt)$ such that  for all $x=(\xt,x_n)\in (-\delta',\delta')$:
\begin{equation}\label{eq_taylor_f_alpha}
f(\xt,x_n)=\lambda_{0}(\xt)+ x_n\cdot \frac{\pa f}{\pa x_n}(\xt,c(\xt)) .
\end{equation}
 For $|x_n|<\delta'(\xt)$, thanks to the definition of $g$, we can deduce 
$$|f(x)-g(x)|\le |\lambda_{0}(\xt)-\mu_{0}(\xt)| +x_n| \frac{\pa f}{\pa x_n}(\xt,c(\xt))-\mu_{1}(\xt) |,$$
By (\ref{eq_mu_lambda}), the first term of the sum which appears in the right-hand-side of this inequality is smaller than $\ep'(\xt)\cdot \delta'(\xt)$, and by (\ref{eq_lambda_moins_der}), the second one is smaller than $\ep(x)\cdot \delta'(\xt)$, for all $x=(\xt,x_n)\in (-\delta',\delta')$. Since $\delta'(\xt)<2\delta(x)$ (by (\ref{eq_delta_deltaprime})) and $\ep'(\xt)\le \ep(x)$ (by (\ref{eq_ep_prime})) on $(-\delta',\delta')$, this shows that $|f(x)-g(x)| <2\ep(x)\cdot \delta(x)$ on this set.
   
   It thus only remains to prove the corresponding estimate for the derivative, i.e., that $|d_x f -d_x g|<\ep(x) $ on $(-\delta',\delta')$ for $|\delta|_1$ small enough.
%    By (\ref{eq_lambda_vert}), we see that 
% \begin{equation}\label{eq_variation_c}
% |\frac{\pa f}{\pa x_i}(\xt,c(\xt))-\lambda_1(\xt)|\le \ep(\xt).
% \end{equation}
We first check that, for $|\delta|_1$  small enough, we have for each $i< n$:\begin{equation}\label{eq_der_mu_1}
                                            | \frac{\pa \mu_{1}}{\pa x_i}|<\frac{\ep}{\delta}.
                                            \end{equation}
 To see this,  observe that a computation of derivative yields that for each $i<n$:  %$$\frac{\pa \lambda_{1}}{\pa x_i}(\xt)=\frac{\frac{\pa f}{\pa x_i}(\xt,\delta(\xt))-\frac{\pa f}{\pa x_i}(\xt,0)+\frac{\pa \delta}{\pa x_i}(\xt)\frac{\pa f}{\pa x_i}(\xt,\delta(\xt))}{\delta(\xt)} -\frac{\frac{\pa \delta}{\pa x_i}(\xt)(f(\xt,\delta(\xt))-f(\xt,0))}{\delta^2(\xt)}$$
 $$\delta'(\xt)\frac{\pa \lambda_{1}}{\pa x_i}(\xt)=u(\xt)-\frac{\pa \delta'}{\pa x_i}(\xt)\left(\lambda_{1}(\xt)-\frac{\pa f}{\pa x_n}(\xt,\delta'(\xt))\right),$$
 where,  we have set for simplicity $u(\xt):=\frac{\pa f}{\pa x_i}(\xt,\delta'(\xt))-\frac{\pa f}{\pa x_i}(\xt,0)$. Since $f$ is $\C^1$, the function  $u$  tends to zero (in $\D^0(\R^{n-1})$) as $|\delta|_1$ tends to $0$ and the sequence function  $\left(\lambda_{1}(\xt)-\frac{\pa f}{\pa x_i}(\xt,\delta'(\xt))\right)$ is bounded (in $\D^0(\R^{n-1})$). Consequently, $\delta'(\xt)\frac{\pa \lambda_{1}}{\pa x_i}$ tends to zero in $\D^0(\R^{n-1})$ as $|\delta|_1$ goes to zero. For $|\delta|_1$ small enough, this function is thus smaller than $\ep$, which, via the second inequality of (\ref{eq_mu_lambda}), establishes  (\ref{eq_der_mu_1}).

Observe then that by definition of $g$  we have for each $i<n$:
\begin{equation}\label{eq_taylor_g_alpha}
\frac{\pa g}{\pa x_i}(\xt,x_n)=\frac{\pa \mu_{0}}{\pa x_i}(\xt)+x_n\cdot \frac{\pa \mu_{1}}{\pa x_i}(\xt),
\end{equation}
and therefore $$|\frac{\pa g}{\pa x_i}(x)-\frac{\pa f}{\pa x_i}(x)|\le |\frac{\pa \mu_{0}}{\pa x_i}(\xt)-\frac{\pa \lambda_0}{\pa x_i}(\xt)|+|x_n\cdot \frac{\pa \mu_{1}}{\pa x_i}(\xt)|+|\frac{\pa \lambda_0}{\pa x_i}- \frac{\pa f}{\pa x_i}(x)|,$$
which, due to (\ref{eq_mu_lambda}), (\ref{eq_der_mu_1}), and (\ref{eq_lambda_vert}), must be smaller than $3\ep$ for all $(\xt,x_n)$ satisfying  $|x_n|<\delta(x)$ and each $i<n$. Finally, note that $$|\frac{\pa g}{\pa x_n}(x)-\frac{\pa f}{\pa x_n}(x)|=|\mu_{1}(\xt)-\frac{\pa f}{\pa x_n}(x)|< |\mu_{1}(\xt)-\lambda_{1}(\xt)|+|\lambda_{1}(\xt)-\frac{\pa f}{\pa x_n}(x)|, $$
which, thanks to
(\ref{eq_mu_lambda}) and (\ref{eq_lambda_moins_der}), must be smaller than $2\ep$ for all $x\in (-\delta',\delta')$. This yields the result in the case where $\xi\equiv 0$, completing step \ref{ste_cell_r_n-1}.

 \end{ste}
  
  \begin{ste}\label{ste_ap1n} We perform the induction step of the proof.

%Fix a $\C^m$ function $f$ on a closed set $A$.
% As $A$ is closed, the function $f$ can be extended to a $\C^1$ function on $\R^n$. 

We shall make use of the same method as in the last step of the proof of Theorem \ref{thm_approx_lips}. 
   Let $\Sigma$ be a stratification  of $\R^n$   as provided by Proposition \ref{pro_stratification}. We denote by $\Sigma'$ the collection of all the strata  of $\Sigma$ that are maximal, in the sense that they do not lie in the closure of another stratum of positive codimension. We denote by
   $\Sigma_0$ the set constituted by all the strata of $\Sigma'$ of codimension $0$ and by 
$\Sigma_1$ the set of the strata of $\Sigma'$ that are of positive codimension. 
Hence, $\Sigma'=\Sigma_1\cup \Sigma_0$.

 %Let $N$ be a constant bigger than $N_S$, for all $S\in \Sigma_3$. 
 Let $S\in \Sigma_1$. Up to an orthonormal change of coordinates, $S$ is the graph of some Lipschitz function $\xi_S:D\to \R$, where $D$ is a Lipschitz cell of $\R^{n-1}$. As no confusion may arise, we will identify  $S$ with the graph of $\xi_S$. We can extend $\xi_S$ to a Lipschitz function $\overline{\xi}_S$ defined on $\R^{n-1}$. Given $j$, set $\delta'_{S,j}(\xt):=\dej(\xt,\xib_S(\xt))$ (where  $\dej$ is as in (\ref{eq_delta_j})).

 By step \ref{ste_cell_r_n-1},  for every $j$ large enough there is a $\C^\om$ function $g_{S,j}$ which satisfies on $(\xib_S-\delta'_{S,j},\xib+\delta'_{S,j})$:
 \begin{equation}\label{eq_delta_ep_approx}
  |g_{S,j}-f|<\ep \cdot \dej \et |dg_{S,j}-df|<\ep.
 \end{equation}
By Lemma \ref{lem_v_w}, there is a positive real number $r<1$ such that $\V_{r\dej}(\Gamma_{\xib_S}) \subset (\xib_S-\delta'_{S,j},\xib+\delta'_{S,j}) $ (since the stratification $\Sigma_1$ is finite, we may choose the same $r$ for all the strata).
      
% By Lemma \ref{lem_v_w}, if $b$ is a sufficiently small positive constant then
%  $$  \V_{b \dej}(\Gamma_{\xib_S})\;\subset\;[\xib_S-\frac{\delta'}{2},\xib_S+\frac{\delta'}{2}].$$
%  Remark that since $\Sigma_0$ is finite, we can choose a common $b$ for all the strata $S\in \Sigma_0$.                                                                                                                                                                    In particular, the sets  $\W_{b\delta_j}(S)$, $S\in \Sigma_1$, and $[\xib_S-\frac{\delta'}{2},\xib_S+\frac{\dejp}{2}]$, $S\in \Sigma_0$, cover $\R^n$ for all $j$.
%       

Fix now a stratum  $S\in \Sigma_0$. Let $L$ be a constant bigger than all the  $L_{\xi_T}$, $T\in \Sigma_1$, and let $b$ be the constant obtained by applying Proposition \ref{pro_bump_graph} to this $L$.  
 Moreover, applying Proposition \ref{pro_bump_open_cell} to each stratum of $\Sigma_0$ (which are open Lipschitz cells after a possible orthonormal change of coordinates), we get a positive constant $c$.

By Lemma \ref{lem_app_open_cell}, for every $j$, there is a $\C^\om$ function $g_{S,j}$ on $\R^n$ which satisfies  on $\W_{sc\dej}(S) $,  $s:=br$, 
\begin{equation}\label{eq_g_f_sigma_1_1}                                                                                                                                                                                                                                                                                                                                                                                                                                                                      
                                               |g_{S,j}-f|_1<\ep \cdot \dej.                                                                                                                                                                                                                                                                                                                                                                                                                            \end{equation}
%  Because  $\Sigma_3$ is a finite set, we can choose the same $j$ for all the strata $S\in \Sigma_3$. 
Now, for  $j$ sufficiently big we can set:  
  \begin{equation}\label{eq_dfn_mu_1}
   \mu_j:=\min_{S\in \Sigma'} \frac{\dej\cdot \ep}{1+|f|_1+|g_{S,j}|_1}.
  \end{equation}

Let now again $S$ be an element of $\Sigma_1$. As above, we will identify $S$  with the graph of $\xi_S$.
By  Proposition \ref{pro_bump_graph},  there exists a constant $N_S\ge 1$ such that for every $j$ 
there is a $\C^\om$ function $\lambda_{S,j}:\R^n\to [0,3)$ such that 
 $|\lambda_{S,j}|_1<\mu_j$ on the set $\R^n\setminus\V_{r\dej}(\Gamma_{\xib_S})$,
 $\lambda_{S,j}> 1$ on the set $\V_{s\dej}(\Gamma_{\xib_S})$,  and which satisfies (\ref{eq_dlambda}) on $\R^n$.
%  \begin{equation}\label|d\lambda_{S,j}|< \frac{N_S }{ \delta_j}.\end{equation}

%Let $L$ be a constant bigger than all the Lipschitz constant of the functions $\xi_S$, $S\in \Sigma_2$, and 
Let now $S$ denote a stratum of $\Sigma_0$. By  Proposition \ref{pro_bump_open_cell}, there is a constant $N_S$ such that for every $j\in \N$  there is $\lambda_{S,j}\in \din$  which satisfies $|\lambda_{S,j}|_1<\mu_j$ on the set $\R^n\setminus \W_{sc\delta_j}(S)$, $\lambda_{S,j}> 1$ on the set $ \W_{s\delta_j}(S)$, and which satisfies (\ref{eq_dlambda}) for some constant $N_S$.
  
  %s an $(N_S,\frac{c}{2}\delta_j,\mu_j)$-bump function on $\W_{c\dej}(S)$ (where $c$ is the constant that satisfies (\ref{eq_c}) above).
 
%  For each $S\in \Sigma_2$, by choice of $N_S$, we know that there is a function  $\lambda_S$ which is a $(N_S,\delta_j,\mu)$-bump function for $\V_{\dej}(\Gamma_{\xi_S})$ (for the choice of $j$ and $\mu$ that we just made). Moreover, for each $S\in \Sigma_1$,   we know that there is a function  $\lambda_S$ which is a $(N_S,\frac{c}{2}\delta_j,\mu)$-bump function on $\W_{c\dej}(S)$.

% 
% Set now \begin{equation}\label
% g_j:= \sum_{S\in \Sigma'} \theta_{S,j}\cdot g_{S,j}\;,\quad \mbox{ where }\;\;\theta_{S,j} :=\frac{\lambda_{S,j}}{\sum_{T\in \Sigma'}\lambda_{T,j}}.         
%         \end{equation}

 We now define the functions $g_j$, $g_{S,j}$, and $\theta_{S,j}$ as in (\ref{eq_dfn_g}) (for some $j$ large enough). Let us check that $g_j$ is the desired approximation.

 Observe first that the sets $\V_{s\dej}(\Gamma_{\xib_T})$, $T\in \Sigma_1$, together with the sets $\W_{s\dej}(T)$, $T\in \Sigma_0$, cover $\R^n$. Therefore, for every $x\in \R^n$, there is a stratum $T\in \Sigma'$ such that $\lambda_{T,j}(x)>1$. This shows that $\sum_{T\in \Sigma'}\lambda_{T,j}(x)>1$, which proves that $\theta_{S,j}\le \lambda_{S,j}$, for each $S\in \Sigma'$.   We deduce that for every $S\in \Sigma_1$ we have  on $\R^n \setminus \V_{r\dej}(\Gamma_{\xib_S})$ 
 $$\theta_{S,j}|g_{S,j}-f|\overset{(\ref{eq_dfn_mu_1})}{<} \dej\cdot \ep<\ep.$$
  By  (\ref{eq_delta_ep_approx}), we deduce that  we have for such $S$ (on the whole of $\R^n$):
 \begin{equation}\label{eq_g_f_lipschitz_1} \theta_{S,j}|g_{S,j}-f|< \ep.\end{equation}
 Moreover,  the same argument (replacing (\ref{eq_delta_ep_approx}) with (\ref{eq_g_f_sigma_1_1}), yields the analogous estimate holds for the strata of $\Sigma_0$.
As a matter of fact, we can write:  
%  by (\ref{eq_g_f_sigma_1}), $|g_j-f|< \dej$  on $\W_{b\dej}(S)$, for all $S\in \Sigma_1$. Moreover, , we can write (since $\sum_{S\in \Sigma'}\theta_{S,j}=1$):
%  \begin{equation}\label{eq_g_f_lipschitz} $|g_j-f|<L_S\cdot \dej$ on $\V_{\dej}(G_S)$ for all $S\in \Sigma_2$, and T
  $$|g_j-f|=|\sum_{S\in \Sigma'}\theta_{S,j}\cdot(g_{S,j}-f)|\overset{(\ref{eq_g_f_lipschitz_1})}{<}a \cdot \ep,$$
 %\end{equation}
 for some constant $a$ (independent of $\ep$). This already yields the desired estimate for $|g_{j}-f|$. It remains to prove the analogous estimate for $|dg_j-df|$.\end{ste}

%  Observe for this purpose  that by definition of $g_j$ we have for $x$ on an open dense subset of $\R^n$:
%  \begin{equation}\label{eq_g_f_lips}
%    d(g_j-f) =d\sum_{S\in \Sigma'} \theta_S (g_{S,j}- f)
%   = \sum_{S\in \Sigma'} \theta_{S,j}\cdot d(g_{S,j}- f) +(g_{S,j}- f)\cdot d \theta_{S,j}.
% \end{equation} 
In view of (\ref{eq_g_f_lips}), it is clear that it suffices to show that for all $S\in \Sigma'$ we have for 
\begin{equation}\label{eq_claim_g_f_dtheta}
  |d\theta_{S,j}|\cdot  |g_{S,j}- f| <N\cdot\ep
\end{equation}
and \begin{equation}\label{eq_claim_g_f_theta}
  \theta_{S,j}\cdot  |dg_{S,j}- df| <N\cdot \ep,
\end{equation}
for some constant $N$ independent of $\ep$.

  We first focus on (\ref{eq_claim_g_f_dtheta}), starting with the case where the stratum $S$ belongs to $\Sigma_1$. Remark that, because (\ref{eq_dlambda}) holds for $S$, inequality (\ref{eq_theta_S_Sigma_1_I}), which comes down from this estimate, must hold as well. By (\ref{eq_delta_ep_approx}), we see that 
this inequality already shows that estimate  (\ref{eq_claim_g_f_dtheta}) holds on $\V_{r\dej}(\Gamma_{\xib_S})$.

%    there is a constant $N_S'$ (independent of $j$) we have on $\R^n$:% on $\V_{\frac{c}{2}\dej}(\W_{c\dej}(S))$:
% \begin{equation}\label{eq_theta_S_Sigma_1_I}
%   |d\theta_S|\le \frac{ N_S'}{\dej}.
% \end{equation}

On the complement of this set, because we have $|\lambda_{S,j}|_1<\mu_j$, a direct computation of derivative shows  that there is a constant $N''_S$ such that we have 
\begin{equation}\label{eq_theta_S_Sigma_1_II_1}
   |d\theta_{S,j}|<\mu_j \cdot \frac{N_S''}{\dej}\overset{(\ref{eq_dfn_mu_1})}{\le}  \frac{N_S''\cdot \ep}{1+|g_{S,j}|+|f|}.
\end{equation}
This shows (\ref{eq_claim_g_f_dtheta}) for the strata of $\Sigma_1$. %holds on $\R^n\setminus \V_{r\dej}(\Gamma_{\xib_S})$.
% (by (\ref{eq_dfn_mu}))for all $x\in\V_{\frac{c}{2}\dej}(\W_{c\dej}(S))$,
% By (\ref{eq_theta_S_Sigma_1_II}), this entails that $|(g_S- f)\cdot d \theta_S |$ is bounded on $\R^n\setminus \V_{\dej}(\Gamma_{\xib_S})$.
%  \begin{equation}\label{eq_g-f_theta}
%   |(g_S- f)\cdot d \theta_S |< \frac{|g_{S,j}-f|}{1+|g_{S,j}-f|} ,
%  \end{equation}
%   which is bounded. 

 We now prove (\ref{eq_claim_g_f_dtheta}) in the case of a stratum $S$ of $\Sigma_0$. On $\W_{sc\delta_j}(S)$, because by (\ref{eq_g_f_sigma_1_1}) we have $|g_{S,j}-f|<\ep \cdot \delta_j$, and since  (\ref{eq_theta_S_Sigma_1_I}) holds (again this inequality follows from (\ref{eq_dlambda}) which holds for $S$), we see that $$|d \theta_{S,j}|\cdot |g_{S,j}- f|<N'_S\cdot \ep,$$ yielding (\ref{eq_claim_g_f_dtheta}) on this set.
Moreover, on the complement of this set, because we have $|d\lambda_{S,j}|<\mu_j$, writing the same computation as in (\ref{eq_theta_S_Sigma_1_II_1}), we get (\ref{eq_claim_g_f_dtheta}).

It remains to prove (\ref{eq_claim_g_f_theta}). We 
 start with the case where the stratum $S$ belongs to $\Sigma_1$. By (\ref{eq_delta_ep_approx})  we see that the desired estimate holds on $\V_{r\dej}(\Gamma_{\xib_S})$ (since $\theta_{S,j}\le 1$). On the complement of this set, since $ \theta_{S,j} <\mu_j$, we can write:
 \begin{equation}\label{eq_theta_mu}  \theta_{S,j}\cdot  |dg_{S,j}- df| \le \mu_j \cdot |g_{S,j}- f|_1 \overset{(\ref{eq_dfn_mu_1})}{<}\ep,  \end{equation}                                                                                                                                                                           
yielding (\ref{eq_claim_g_f_theta}) for the strata of $\Sigma_1$.

 We now address the case where $S$ belongs to $\Sigma_0$. For such a stratum, inequality (\ref{eq_g_f_sigma_1_1}) shows that the desired estimate holds on $\W_{sc\dej}(S)$. On the complement of this set, because $\theta_{S,j}<\mu_j$, we see that computation which is carried out in (\ref{eq_theta_mu}) yields the desired estimate. 
\end{proof}

 \begin{subsection}{Approximations on  submanifolds of $\R^n$}\label{sect_plongement}
 The idea is to first establish the desired theorem for a closed definable manifold with boundary of $\R^n$ (Proposition \ref{pro_closed}). We then make use of this result to show that every $\C^\infty$ definable manifold has a closed embedding (Proposition \ref{lem_plongement}).
 
 Given a definable $\C^m$ submanifold $M$ of $\R^n$, $m\ge 2$, there is a definable  neighborhood $U$ of $M$ in $\R^n$ and a definable retraction $r:U\to M$ such that for all $x\in U$, $r(x)$ is the point that realizes the distance from $x$ to $M$. The vector $(x-r(x))$ is then orthogonal to the tangent space to $M$ at $r(x)$ and we say that $(r,U)$ is {\it a tubular neighborhood of $M$}. The mapping $r$ is at least $\C^{m-1}$ and, if $M$ is 
 $\C^\om$, then so is $r$.
 
 We also recall that a definable function $f$ on a set $A\in \D_n$ is said to be $\C^m$ on  $A$, $m\in \N\cup \{\om\}$, if it extends to a $\C^m$ function on an open neighborhood of $A$ in $\R^n$. 

\begin{pro}\label{pro_closed}
 Let $f\in \D^1(M,N)$, where $M$ is a closed $\C^1$ submanifold with boundary  of $\R^n$  and $N$ a $\C^\om$  submanifold of $\R^k$. Given $\ep \in \spl(M)$, there is  $g\in \D^\infty(M,N)$ satisfying $|f-g|_1<\ep$ on $M$.
\end{pro}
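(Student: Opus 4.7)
The plan is to reduce the problem to the ambient case provided by Proposition \ref{pro_rn} by extending $f$ componentwise to $\R^n$, approximating into $\R^k$ there, and then projecting the approximation back to $N$ via a $\C^\om$ tubular retraction of $N$.

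First, since $M$ is closed in $\R^n$ and $f\in\D^1(M,N)$, the definition of $\C^1$ on $M$ already provides a $\C^1$ definable extension of $f$ to some open definable neighborhood $V$ of $M$; multiplying by a $\C^1$ definable cutoff that equals $1$ on a neighborhood of $M$ and vanishes outside a slightly smaller definable open subset of $V$ yields a definable $\C^1$ mapping $\tilde f:\R^n\to\R^k$ that agrees with $f$ on $M$. Second, because $N$ is $\C^\om$, there exists a $\C^\om$ definable tubular neighborhood $(r,U)$ of $N$ in $\R^k$, with $r:U\to N$ a $\C^\om$ retraction satisfying $r|_N=\mathrm{id}_N$.

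The crux of the argument is to choose a positive continuous definable function $\eta\in\spl(\R^n)$ small enough that for every $\C^\om$ map $\tilde g:\R^n\to\R^k$ satisfying $|\tilde f-\tilde g|_1<\eta$ on $\R^n$ one has $\tilde g(M)\subset U$ (so that $r\circ\tilde g$ is defined on $M$) and $|f-(r\circ\tilde g)|_M|_1<\ep$. Since $r|_N=\mathrm{id}_N$ and $\tilde f(M)=f(M)\subset N$, we have $f=(r\circ\tilde f)|_M$, and the chain rule gives the pointwise estimate
\[|d(r\circ\tilde g)-d(r\circ\tilde f)|\;\le\;|dr\circ\tilde g|\cdot|d\tilde g-d\tilde f|\;+\;M_{2}(x)\cdot|\tilde g-\tilde f|\cdot|d\tilde g|,\]
where $M_{2}(x)$ bounds $d^{2}r$ on the segment joining $\tilde f(x)$ and $\tilde g(x)$. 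Local bounds for the operator norms of $dr$, $d^{2}r$, and of $|d\tilde f|+1$ (which dominates $|d\tilde g|$ once $\eta\le 1$) are continuous positive definable functions on $\R^n$; combining them with $d(\tilde f(x),\R^k\setminus U)$ and invoking Lemma \ref{lem delta ep} allows us to assemble an admissible $\eta$. Given such an $\eta$, Proposition \ref{pro_rn} applied to each coordinate of $\tilde f$ produces a $\C^\om$ definable map $\tilde g:\R^n\to\R^k$ with $|\tilde f-\tilde g|_1<\eta$, and $g:=(r\circ\tilde g)|_M$ is the desired $\C^\om$ definable approximation of $f$.

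The main obstacle is the global, non-compact nature of $M$ and $N$: the tubular-neighborhood radius around $f(M)$ and the operator norms of $dr$ and $d^{2}r$ may decay or blow up at infinity, while $\ep$ is only a positive continuous function. Consequently $\eta$ cannot be chosen as a constant, and the delicate point is to patch it together as a continuous positive definable function from these ingredients; this is precisely the type of adjustment that Lemma \ref{lem delta ep} is designed to provide.
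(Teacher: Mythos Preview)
Your proposal is correct and follows essentially the same route as the paper's proof: extend $f$ to a $\C^1$ definable map $\R^n\to\R^k$, invoke Proposition~\ref{pro_rn} componentwise to get a $\C^\om$ approximation, and post-compose with the $\C^\om$ tubular retraction $r$ onto $N$. The paper is considerably terser (it simply asserts the extension and that ``if $\delta$ is chosen sufficiently small'' the composition works), whereas you spell out the cutoff extension, the chain-rule estimate, and the role of Lemma~\ref{lem delta ep} in manufacturing a definable $\eta\in\spl(\R^n)$; these elaborations are sound and do not depart from the paper's strategy.
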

\begin{proof}  Since $M$ is closed,  the mapping $f$  extends to a $\C^1$ mapping $\overline{f}:\R^n \to \R^k$. By Proposition \ref{pro_rn}, for every $\delta\in \spn$ there is a $\C^\om$ mapping $\theta:\R^n\to \R^k$ such that  $|\overline{f}-\theta|_1<\delta$ on $\R^n$. Let $(r,U)$ be a tubular neighborhood of $N$ and let $V$ be a closed neighborhood of $M$ in $\overline{f}^{-1}(U)$.
 If $\delta$ is chosen sufficiently small then $\theta(x)\in U$, for all $x\in V$, and we can set for such $x$, $g(x):=r (\theta(x))$.  Given $\ep\in \spl(M)$, if  we choose $\delta$  sufficiently small then $|g-f|_1$ will be smaller  than $\ep$ on $M$.
\end{proof}
 
 The idea to show Proposition \ref{lem_plongement} is to first construct a $\C^2$ embedding that we will approximate by a $\C^\om$ embedding. We thus shall need the following approximation result for manifolds, which is inspired from the techniques developed to study Nash compact manifolds \cite{bcr}.
 
 \begin{pro}\label{pro_plongement}
 Given a $\C^2$ closed definable submanifold $M$ of $\R^n$, there is a $\C^1$ definable diffeomorphism $h:M\to\hat{M}$, with $\hat{M}$  definable closed $\C^\om$ submanifold of $\R^n$. 
 This embedding can be chosen arbitrarily close to the identity in the sense that, given $\ep \in \spl(M)$, we can require that for all $x\in M$ and all unit vector $u\in T_xM$: 
     \begin{equation}\label{eq_h_approx}
      |x-h(x)|< \ep(x) \et |u-d_x h(u)|<\ep(x).
     \end{equation}
  %   identifying $\R^n$ with $\R^n \times \{0_{\R^{k-n}}\}$.
 % Let $M$ be a $\C^1$ closed submanifold of $\R^n$ and let $\ep\in \spn$.  There exists a, $k\ge n$, and   a tubular neighborhood $r:U \to M\times\{0_{\R^{n-k}}\}$ with  $\hat{M}\subset U$ and such that for all $x\in \hat{M}$ 
 \end{pro}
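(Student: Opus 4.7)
The plan is to obtain $h$ as a $\C^\om$ approximation of the inclusion $i\colon M\hookrightarrow\R^n$ via Proposition \ref{pro_closed}, applied to $i\in\D^1(M,\R^n)$ with target the $\C^\om$ submanifold $\R^n$. This yields, for any prescribed $\delta\in\spl(M)$, a definable $h\in\D^\om(M,\R^n)$ with $|h-i|_1<\delta$ on $M$. The task is then to verify that for $\delta$ small enough, $h$ is a closed embedding whose image $\hat{M}:=h(M)$ is a $\C^\om$ submanifold of $\R^n$.

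The essential tool is the $\C^1$ tubular neighborhood $(r,U)$ of $M$, which is available because $M$ is $\C^2$. Taking $\delta(x)<\tfrac12 d(x,\R^n\setminus U)$ forces $h(M)\subset U$, so the definable $\C^1$ self-map $\phi:=r\circ h\colon M\to M$ is well-defined. Since $r|_M=\mathrm{id}_M$ and $dr$ is bounded on compact subsets of $U$, the bound $|h-i|_1<\delta$ transfers to $|\phi-\mathrm{id}_M|_1<\eta$ for any prescribed $\eta\in\spl(M)$, provided $\delta$ is chosen small enough relative to $\eta$.

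The principal obstacle is to argue that $\phi$ is a diffeomorphism of $M$. Since $d_x\phi$ is a small $\C^0$-perturbation of $\mathrm{id}_{T_xM}$, it is invertible and $\phi$ is a local diffeomorphism. For properness: if $K\subset M$ is compact and $\phi(x)\in K$, then $|x|\le|\phi(x)|+\eta(x)$, so by taking $\eta$ bounded we see that $\phi^{-1}(K)$ lies in a bounded subset of the closed set $M$, hence is compact. Thus $\phi$ is a proper local diffeomorphism, hence a covering of $M$; interpolating by $\Phi_t:=r\circ((1-t)i+th)$, which for $\delta$ small is itself a proper local diffeomorphism for every $t\in[0,1]$, connects $\phi$ to $\mathrm{id}_M$ through coverings of constant sheet-number on each of the finitely many connected components of $M$, forcing $\phi$ to be a diffeomorphism.

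Once $\phi$ is a diffeomorphism, the relation $\phi=r\circ h$ shows that $h$ is injective, that each $d_xh$ is injective, and that $h$ is proper. Hence $h\colon M\to\hat{M}$ is a $\C^\om$ closed embedding and $\hat{M}$ is a closed $\C^\om$ definable submanifold of $\R^n$. The closeness estimates (\ref{eq_h_approx}) are immediate from the bound $|h-i|_1<\delta\le\ep$: on the one hand $|x-h(x)|=|i(x)-h(x)|<\delta(x)$, and on the other, for any unit vector $u\in T_xM$, $|u-d_xh(u)|=|d_xi(u)-d_xh(u)|\le|d_xi-d_xh|<\delta(x)$.
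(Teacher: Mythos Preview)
Your argument has a genuine gap: it does not produce a $\C^\om$ submanifold $\hat M$. Recall that in this paper, $h\in\D^\om(M,\R^n)$ (or $\D^\infty(M,\R^n)$, which is all Proposition~\ref{pro_closed} actually claims) means that $h$ extends to a $\C^\om$ (resp.\ $\C^\infty$) map $\tilde h$ on an open neighborhood $V$ of $M$ in $\R^n$. If $\tilde h$ is $\C^1$-close to the identity it is a local $\C^\om$ diffeomorphism of $V$ onto its image, and then $\hat M=h(M)=\tilde h(M)$ inherits exactly the regularity of $M$, which is only $\C^2$. Concretely, if $M$ is locally the graph of a $\C^2$ function $\psi$ that is not $\C^3$, then $\hat M$ is locally parametrized by $t\mapsto \tilde h(t,\psi(t))$, which is again merely $\C^2$. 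So your $\hat M$ is a $\C^2$ submanifold, not a $\C^\om$ one, and the conclusion of the proposition fails.

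The paper circumvents this by \emph{not} taking the image of $M$ under a smooth map. Instead it realizes $M$ as a transverse preimage: on a tubular neighborhood $U$ of $M$ (a closed $\C^\om$ manifold with boundary, being a subset of $\R^n$) it defines $f(x)=(x-r(x),\,T_{r(x)}M)\in E_{n,l}$, so that $M=f^{-1}(\text{zero section})$. Approximating $f$ by a $\C^\om$ map $g$ via Proposition~\ref{pro_closed} and invoking transversality, one gets $\hat M:=g^{-1}(\text{zero section})$, which is $\C^\om$ by the preimage theorem since both $g$ and the zero section are $\C^\om$. The diffeomorphism $h$ is then obtained as the inverse of $r|_{\hat M}$, and is only $\C^1$---which is all the statement requires. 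The key point is that the regularity of $\hat M$ comes from $g$ and the target, not from $M$.
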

 \begin{proof}Let $\G_{n,l}$ be the Grassmannian of $l$-dimensional vector subspaces of $\R^n$, where $l:=\dim M$, and  $$E_{n,l}:=\{(v,P)\in \R^n \times \G_{n,l}: v\perp P\}.$$ 
Denote by  $\gamma_{n,l}:E_{n,l}\to \G_{n,l}$ the  vector bundle defined by $ (v,P)\mapsto P$. We will regard $E_{n,l}$ as a submanifold of $\R^k$, for some $k$. Let $(r,U)$ be a tubular neighborhood of $M$ and $w:M\to \G_{n,l}$ the $\C^1$ mapping that assigns to every element $x\in M$ the vector space $T_x M$. Define then a $\C^1$ mapping $f:U\to E_{n,l}$ by $f(x):=(x-r(x),w(r(x)))$. Taking $U$ smaller if necessary, we can assume that it is a closed manifold with boundary.   By Proposition \ref{pro_closed}, for every $\ep \in \spl(U)$, there is a $\C^\om$  mapping $g:U\to \R^k$ satisfying $|f-g|_1<\ep$. The mapping $f$ is transverse to the zero section of the vector bundle $\gamma_{n,l}$ and we have $f^{-1}(\{0_{\R^{n}}\} \times \G_{n,l})=M$.  Consequently, if the approximation $g$ is good enough, this mapping is also transverse to  the zero section of the vector bundle $\gamma_{n,l}$ and the set $$\hat{M}:=g^{-1}(\{0_{\R^{n}}\} \times \G_{n,l})$$ is  a closed $\C^\om$ submanifold of $\R^n$. Moreover, if the approximation is good enough the mapping $r$ induces a $\C^1$ diffeomorphism from $\hat{M}$ onto $M$ isotopic to the identity map. 
 \end{proof}
\begin{pro}\label{lem_plongement}
 Every  $\C^\om$ definable submanifold of $\R^n$ is   $\C^\om$ definably diffeomorphic to a closed $\C^\infty$  definable submanifold of $\R^{n+1}$. 
\end{pro}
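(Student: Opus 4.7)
The plan is to exhibit the required closed embedding as the graph of a $\C^\om$ definable function $g\colon M\to\R$ that is \emph{proper}, meaning $g^{-1}([-R,R])$ is relatively compact in $M$ for every $R\ge 0$. Given such a $g$, the map $\Phi(x):=(x,g(x))$ is an injective $\C^\om$ definable immersion $M\to\R^{n+1}$ with inverse given by projection onto the first $n$ coordinates; hence $\Phi$ is a $\C^\om$ definable diffeomorphism onto $\hat M:=\Gamma_g$. Closedness of $\hat M$ in $\R^{n+1}$ is then immediate from properness: if $(x_m,g(x_m))\to(x,y)$ in $\R^{n+1}$ then $(g(x_m))$ stays bounded, so by properness $(x_m)$ eventually lies in a compact subset of $M$, forcing $x\in M$ and $y=g(x)$ by continuity.

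It thus suffices to construct a $\C^\om$ definable proper function $g$ on $M$. If $M$ is already closed in $\R^n$ then $g(x):=|x|^2$ works, so assume $K:=\overline{M}\setminus M$ is a nonempty closed definable subset of $\R^n$, necessarily disjoint from $M$. The natural continuous proper definable function on $M$ is
$$\psi(x):=\frac{1}{d(x,K)}+|x|^2,$$
but this is only $\C^0$ because $d_K(x):=d(x,K)$ is merely Lipschitz. My plan is to first apply Theorem~\ref{thm_approx_lips} to $d_K$ to get a Lipschitz definable $\tilde d\in\din$ with $|\tilde d-d_K|<\dej$ for some $j$ to be chosen later, and then convert $\tilde d$ into a $\C^\om$ definable function $\rho$ on $\R^n$ which is strictly positive on $M$, vanishes on $K$, and satisfies $\rho(x)\to 0$ as $x\to K$. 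Setting $g(x):=1/\rho(x)+|x|^2$ on $M$ then yields the required proper $\C^\om$ function.

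The hard part is the construction of $\rho$. Simply taking $\rho:=\tilde d$ fails because the tolerance $\varepsilon$ in Theorem~\ref{thm_approx_lips} is required to lie in $\spn$ and hence is strictly positive on $K$ as well; so $|\tilde d-d_K|<\varepsilon$ forces only $|\tilde d|<\varepsilon>0$ on $K$, not vanishing there. The remedy is to localize, following the pattern of the proof of Theorem~\ref{thm_approx_lips}: choose a $\C^\om$ stratification of $\R^n$ adapted to $K$ (Proposition~\ref{pro_stratification}), build on each top-dimensional stratum $S$ of $\R^n\setminus K$ a local $\C^\om$ definable substitute $\rho_S$ for $d_K$ that genuinely vanishes at the frontier $\overline S\cap K$ — using the explicit description of $S$ as a Lipschitz cell and individually approximating its defining Lipschitz boundary graphs via Theorem~\ref{thm_approx_lips} — and then paste these local substitutes together using the $\C^\om$ bump functions provided by Propositions~\ref{pro_bump_graph} and~\ref{pro_bump_open_cell}, exactly as in Step~\ref{ste_ap0n} of the proof of Theorem~\ref{thm_approx_lips}. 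Checking that the scaling constants can be chosen so that the arithmetic combination $\rho:=\sum_S\theta_S\rho_S$ remains strictly positive on $M$ while still tending to zero along $K$ is the technical heart of the argument.
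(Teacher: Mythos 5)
Your reduction of the statement to the existence of a proper definable $\C^\om$ function $g$ on $M$, together with the graph-and-projection argument showing that $\Gamma_g$ is then a closed $\C^\om$ submanifold of $\R^{n+1}$ diffeomorphic to $M$, is correct and is essentially how the paper's proof begins. The gap is in the construction of $\rho$, and it is not a technicality: the localization you propose runs into exactly the obstruction you yourself identified for the naive approach. Every tool available at this stage (Theorem~\ref{thm_approx_lips}, Propositions~\ref{pro_bump_graph} and~\ref{pro_bump_open_cell}) produces functions controlled only up to a tolerance $\mu\in\spn$, which is \emph{strictly positive on $K=\overline{M}\setminus M$}. Concretely: (i) a piece $\rho_S$ built by approximating the Lipschitz boundary graphs of a cell $S$ within some $\ep\in\spl(\R^{n-1})$ differs from the true boundary by a function that is positive everywhere, so $\rho_S$ need not tend to $0$ along $\overline{S}\cap K$; (ii) even if each $\rho_S$ did decay along $\overline{S}\cap K$, at a point $x_0\in K$ with $x_0\notin\overline{S}$ the bump coefficient only satisfies $\theta_S<\mu$ with $\mu(x_0)>0$, so the term $\theta_S\rho_S$ in your sum has a strictly positive limit at $x_0$ and $\rho=\sum_S\theta_S\rho_S$ does not tend to zero along $K$. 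No choice of scaling constants repairs this, because by quasi-analyticity one cannot truncate. What you are really asking for --- a $\C^\om$ function with $|\rho-d(\cdot,K)|<d(\cdot,K)/2$ on $M$ --- is an instance of the approximation theorem on the \emph{non-closed} manifold $M$ with a tolerance in $\spl(M)\setminus\spn$, i.e.\ an instance of Theorem~\ref{thm_ef_C_0} itself; Proposition~\ref{lem_plongement} is precisely the device the paper introduces to make such tolerances reachable, so your route is circular in substance.

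The paper escapes this by lowering its ambitions in the first step and recovering analyticity in a second step at the level of manifolds rather than functions. It takes only a $\C^2$ function $\rho'$ with $|\rho-\rho'|<\rho/2$ (this \emph{is} available with a frontier-vanishing tolerance, because finite-order approximation in the Efroymson topology is Escribano's earlier result and does not meet the quasi-analyticity obstruction), so that the graph of $1/\rho'$ is a closed $\C^2$ submanifold of $\R^{n+1}$. It then approximates this closed $\C^2$ manifold by a closed $\C^\om$ manifold $\hat{M}$ via a transversality argument (Proposition~\ref{pro_plongement}), which only requires the $\C^\om$ approximation theorem for maps defined on closed subsets of $\R^n$ (Proposition~\ref{pro_closed}), where tolerances in $\spn$ suffice; the $\C^\om$ diffeomorphism onto $\Omega$ is finally the restriction of the linear projection, not the graph map of a $\C^\om$ function. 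If you want to salvage your write-up, replace the ``hard part'' by this two-step scheme: a $\C^2$ proper function plus an approximation theorem for closed $\C^2$ manifolds.
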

\begin{proof}
Let $\Omega$ be a $\C^\om$ submanifold of $\R^n$  and let $\rho:\Omega \to \R $ be the positive continuous function defined by $\rho(x)=d(x,\R^n \setminus\Omega)$. As $\D^2(\Omega)$ is dense in $\D^0(\Omega)$, we can take a $\C^2$ function $\rho':\Omega \to \R $ such that $|\rho-\rho'|<\frac{\rho}{2}$.  The function $\rho'$ is positive and tends to zero as $x$ tends to the frontier of $\Omega$. Let us denote by $M$  the graph of the function $\frac{1}{\rho'}$ and note that this subset is a closed $\C^2$ submanifold of $\R^{n+1}$.   By Proposition \ref{pro_plongement}, for every  $\ep\in \spl(M)$, there is a $\C^\om$ submanifold  $\hat{M}$ of $\R^{n+1}$ and a $\C^1$ diffeomorphism $h:M\to \hat{M}$  for which (\ref{eq_h_approx}) holds.

 Notice that the canonical projection $\Pi:\R^n \times \R\to \R^n$ induces (by restriction) a $\C^2$ diffeomorphism $\pi:M\to \Omega$.  For $\ep$   sufficiently small, the  mapping $\Pi$ therefore also induces (by restriction again) a $\C^\om$ diffeomorphism  $h:\hat{M}\to  \Omega$.
 \end{proof}
 
 \begin{thm}\label{thm_ef_C_1}
   If $M$ is a $\C^\infty$ definable submanifold of $\R^n$ then $\D^\om(M)$ is dense in $\D^1(M)$ for the $\C^1$ Efroymson topology.
   %and let $f\in \D^1(M)$. For every $\ep \in \spl(M)$ there is $g\in\D^\om(M)$ such that $|f-g|_1<\ep$. 
 \end{thm}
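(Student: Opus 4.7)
The plan is to reduce the statement to the case in which $M$ is closed in its ambient space, where Proposition \ref{pro_closed} applies directly.

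Suppose first that $M$ is closed in $\R^n$. Fixing $f\in \D^1(M)$ and $\ep\in\spl(M)$, I invoke Proposition \ref{pro_closed} with $N=\R$. Inspecting its proof: one extends $f$ to $\bar f\in\D^1(\R^n)$, uses Proposition \ref{pro_rn} to obtain a $\C^\om$ approximation $\theta$ of $\bar f$ on $\R^n$, and since $N=\R$ the composition-with-a-retraction step collapses, so the restriction $g:=\theta|_M$ lies in $\D^\om(M)$ and satisfies $|f-g|_1<\ep$. This settles the closed case.

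For the general case, apply Proposition \ref{lem_plongement} to produce a $\C^\om$ definable diffeomorphism $h:M\to\hat M$ onto a closed $\C^\infty$ definable submanifold $\hat M\subset \R^{n+1}$. Set $\tilde f:=f\circ h^{-1}\in\D^1(\hat M)$ and define a weighted tolerance
$$\tilde\ep(y):=\frac{\ep(h^{-1}(y))}{1+|d_{h^{-1}(y)}h|},\qquad y\in\hat M,$$
which belongs to $\spl(\hat M)$ because $h$ is $\C^\om$ with continuous definable derivative. By the closed case, there exists $\tilde g\in\D^\om(\hat M)$ with $|\tilde f-\tilde g|_1<\tilde\ep$ on $\hat M$. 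Setting $g:=\tilde g\circ h$, one has $g\in\D^\om(M)$ as a composition of $\C^\om$ maps, and a direct chain-rule computation combined with the definition of $\tilde\ep$ yields $|f-g|_1<\ep$ on $M$, as required.

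The main obstacle is the transfer of the $\C^1$ Efroymson estimate through $h$: the $\C^0$ part is transparent, but for the derivative bound one must absorb the factor $|dh|$ introduced by the chain rule, which is precisely why $\tilde\ep$ is weighted by $1+|dh|\circ h^{-1}$. All other work has been carried out in the preceding sections, notably Propositions \ref{pro_rn}, \ref{pro_closed} and \ref{lem_plongement}, so the only novelty at this stage is the bookkeeping for the pullback--pushforward of approximations under $h$.
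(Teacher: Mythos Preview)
Your proof is correct and follows essentially the same route as the paper, which simply says ``By Proposition \ref{lem_plongement}, we can assume that $M$ is closed. The result then follows from Proposition \ref{pro_closed}.'' You have merely made explicit the transfer through the diffeomorphism $h$ (including the weighting of $\tilde\ep$ by $1+|dh|$ to absorb the chain-rule factor) and the observation that the proof of Proposition \ref{pro_closed} actually produces a $\C^\om$ function even though its statement only claims $\C^\infty$.
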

\begin{proof}By Proposition \ref{lem_plongement}, we can assume that $M$ is closed. The result then follows from Proposition \ref{pro_closed}.
% It means that we can apply Proposition \ref{pro_closed} to the $\C^1$-function $h^*f:M\to \R$. This proposition implies that for every $\delta\in \spn$,     there is a $\C^\om$ function $\theta:M\to \R$ such that  $|\theta-h^*f|<\delta$. If $\delta$ is chosen sufficiently small, the function  $g=\pi_*\theta$  will satisfy $|g-f|<\ep$.
\end{proof}

%  \begin{rem}
%   In the above theorem, the constructed diffeomorphism $h:\Omega \to M$ may be required to be
% $|h(x)-x|<\ep(x)$ and $\delta(T_x\Omega,T_{h(x)} M)$ so that a diffeomorphism
%   he diffeomorphism can be chosen arbitrarily close to the identity (for the $\C^1$ Efroymson topology).
%  \end{rem}

 \end{subsection}
 \end{section}

\end{document}